\theoremstyle{plain}
\newtheorem{thm}{Theorem}[section]
\newtheorem{lemma}[thm]{Lemma}
\newtheorem{cor}[thm]{Corollary}
\newtheorem{con}[thm]{Conjecture}
\newtheorem{prop}[thm]{Proposition}
\theoremstyle{definition}
\newtheorem{defn}[thm]{Definition}
\newtheorem{ex}[thm]{Example}
\newtheorem{remark}[thm]{Remark}
\newtheorem{notation}[thm]{Notation}
\newtheorem{assumption}[thm]{Assumption}
\DeclareMathOperator\lk{Lk} 
\DeclareMathOperator\Tk{Tk} 
\newcommand{\R}{\mathbb{R}}
\newcommand{\code}{\mathcal{C}} 
\newcommand{\simp}[1]{\Delta(#1)}
\newcommand{\mincode}{\code_{\text{min}}} 
\newcommand{\link}[2]{\lk_{#1}{(#2)}} 
\newcommand{\mcC}{\mathcal{C}} 
\newcommand{\mcW}{\mathcal{W}} 
\newcommand{\C}{\mathcal{C}}
\date{\today}
\begin{document}

\title{Convexity of Neural Codes with Four Maximal Codewords}

\author[Ahmed]{Saber Ahmed} 
\address{Saber~Ahmed\\ 
         Department of Mathematics and Statistics\\ 
         Hamilton College\\ 
         Clinton\\ 
         New York \ 13323\\ 
         USA} 
\author[Crepeau]{Natasha Crepeau} 
\address{Natasha~Crepeau\\ 
         Department of Mathematics\\ 
         University of Washington\\ 
         Seattle\\ 
         Washington \ 98195\\ 
         USA} 
\author[Flores]{Gisel Flores} 
\address{Gisel~Flores \\
    Department of Mathematics \\
    University of Wisconsin \\
    Madison \\
    Wisconsin \ 53706 \\
    USA} 
\author[Isekenegbe]{Osiano Isekenegbe} 
\address{Osiano~Isekenegbe \\
    Department of Mathematics \\
    Duke University \\
    Durham \\
    North Carolina \ 27708 \\
    USA} 
\author[Perez]{Deanna Perez} 
\address{Deanna~Perez \\
    Department of Mathematics \\
    California State University, Fresno \\
    Fresno \\
    California \ 93740
    USA} 

\author[Shiu]{Anne Shiu} 
\address{Anne~Shiu\\ 
         Department of Mathematics\\ 
         Texas A\&M University\\ 
         College Station\\ 
         Texas \ 77843\\ 
         USA} 
\begin{abstract}
Place cells are neurons that act as biological position sensors, associated with and firing in response to regions of an environment to situate an organism in space. These associations are recorded in (combinatorial) neural codes,
motivating the following mathematical question: Which neural codes are generated by a collection of convex open sets in Euclidean space? Giusti and Itskov showed that a necessary condition for convexity is the absence of ``local obstructions.'' This necessary condition is, in fact, sufficient for certain families of codes. 
One such family consists of all codes
with up to three maximal codewords. In this article, we investigate codes with four maximal codewords, showing that for many such codes, convexity is characterized by the absence of local obstructions, whereas for other such codes, convexity is characterized by the absence of local obstructions and a second type of obstruction, a ``wheel”. Key to our analysis is a case-by-case investigation based on the nerve complex of the set of maximal codewords of a neural code. Up to symmetry, there are 20 possible nerves; and our results fully characterize convexity in 15 of the 20 cases.

\vspace{.1in}
\noindent
 {\bf Keywords:} Neural code, simplicial complex, nerve, convex, local obstruction, wheel

 \vspace{.1in}
\noindent
{\bf MSC Codes:}
13F55, 
52A20, 
92C20 
\end{abstract}

\maketitle

\section{Introduction}
In the brain, \emph{place cells} act as position sensors. Each place cell is associated with a ``preferred'' region of the animal's environment called its \emph{place field} –- when the animal is in a place field, the corresponding place cell fires at a heightened rate. Place fields can be represented as subsets $U_i$ of Euclidean space $\R^d$ (typically, $d=2$ or $d=3$), where subset $U_i$ induces neuron $i$ to fire. The collection $\mathcal{U} = \{U_1,U_2,\dots,U_n\}$ then naturally assigns to each point $x \in \R^d$ a binary string $\sigma_x = c_1 c_2 \dots c_n \in \{0,1\}^n$, called a \emph{codeword}, such that $$ c_\ell = \begin{cases}
    1, & \text{ if } x \in U_\ell \\ 0, & \text{ otherwise}.
\end{cases}$$ A collection of such binary strings is called a \emph{(combinatorial) neural code}, which encodes the animal's neural firing activity. 

    A surprising feature of experimentally observed place fields is the predominance of convex or nearly convex shapes. Their boundaries are also gradual, rather than strictly delimited, reflecting diminishing firing rates farther from the field's center. As such, place fields are well modeled by convex open sets in Euclidean space. Accordingly, codes that can be realized by a collection of convex open sets -- we call such codes {\em convex} -- are of particular interest to neuroscientists.

Towards understanding when codes are \uline{not} convex, Giusti and Itskov introduced the combinatorial \emph{local obstruction} and showed that codes containing local obstructions are not convex~\cite{Giusti}. Subsequently, Ruys de Perez {\em et al.}\ introduced the concept of a \emph{wheel} and showed that codes containing wheels are not convex~\cite{wheels}. Local obstructions and wheels are two primary tools for 
establishing that a given code is not convex, and they are a focus of this article. 
(Additional tools can be found, for instance, in~\cite{morphisms, graphs-neural}.)

For codes with up to $3$ \emph{maximal codewords},
Johnston {\em et al.}\ proved that 
avoiding local obstructions is not only necessary but also sufficient for convexity~\cite{Shiu}.
Their result, however, does not extend for codes with $4$ or more maximal codewords~\cite{Lienkaemper}.  Nevertheless, for certain codes on up to $6$ neurons and $4$ maximal codewords, 
Ruys de Perez {\em et al.}\ showed that being convex is equivalent to avoiding both local obstructions and wheels~\cite{wheels}.  
Subsequently, Amzi Jeffs conjectured that this result generalizes to all codes (with any number of neurons) having at most $4$ maximal codewords (personal communication), as follows.

\begin{con}{(Amzi Jeffs)} \label{conj-Jeffs}
Let $\C$ be a code with up to 4 maximal codewords. Then $\C$ is convex if and only if $\C$ has no local obstructions and no wheels. 
\end{con}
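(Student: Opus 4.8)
The forward implication is already in hand: Giusti and Itskov show that a convex code has no local obstructions~\cite{Giusti}, and Ruys de Perez {\em et al.}\ show that a convex code has no wheels~\cite{wheels}. Thus the entire burden lies in the converse — that a code $\C$ with at most four maximal codewords, no local obstructions, and no wheels must be convex. The plan is to prove this by stratifying all such codes according to the combinatorics of their maximal codewords and then resolving each stratum separately.

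To set up the stratification, I would fix the (at most) four maximal codewords $M_1,\dots,M_4$ and record their intersection pattern as the nerve $\mathcal{N}$ on vertex set $\{1,2,3,4\}$, declaring $S$ a face exactly when $\bigcap_{i\in S}M_i\neq\varnothing$; this is a genuine simplicial complex since nonemptiness of these intersections is closed under passing to subsets, and up to the $S_4$ relabeling symmetry there are only the $20$ nerves named in the abstract. Within a fixed nerve, a code is then determined by which non-maximal codewords — in particular which of the intersections $\bigcap_{i\in S}M_i$ — actually lie in $\C$, so the family of codes realizing a given $\mathcal{N}$ is a finite, explicitly enumerable list. The no-local-obstruction hypothesis I would restate in the standard link form, namely that every face of $\Delta(\C)$ whose link is non-contractible is itself a codeword; this is what makes the hypothesis checkable nerve-by-nerve and connects directly to the link computations flagged in the abstract.

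For producing convex realizations the workhorse is the known sufficient condition that max-intersection-complete codes — those containing every nonempty intersection of maximal codewords — are convex. For each nerve in which the no-local-obstruction hypothesis already forces all of these intersections to be codewords, convexity is immediate and wheels never enter the picture; I expect this block of nerves to be large and to explain why local obstructions alone sufficed in the three-maximal-codeword theorem of~\cite{Shiu}. The remaining nerves are precisely those for which a code can omit some intersection codeword while still avoiding local obstructions. For these I would argue in two moves: first, show that the no-wheel hypothesis excludes exactly the omissions that would destroy convexity; and second, for every surviving configuration, build an explicit open convex cover, reading the gross geometry off the link structure of $\mathcal{N}$ and placing the convex sets so that the prescribed intersections — and only those — are nonempty.

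I expect the main obstacle to be the nerves with the richest intersection structure, such as those containing several triangles or the full tetrahedron, where the code need not be max-intersection-complete and the interaction between local obstructions and wheels is most delicate. There no single uniform construction applies: each allowed pattern of present and absent intersection codewords must be matched either to a hand-built convex cover or to a concrete wheel certifying non-convexity, and the difficulty is in pinning down the exact dividing line between the two. This is where I anticipate the argument to be most technical, and presumably where the five nerves left unresolved by our results remain open.
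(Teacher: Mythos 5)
The statement you are trying to prove is Conjecture~\ref{conj-Jeffs}, which the paper does \emph{not} prove in full --- it remains open, and the paper only establishes it for the nerves L1--L23 and for minimal L24 codes. Your proposal is a roadmap rather than a proof, and you acknowledge as much in your final sentence: the ``two moves'' you describe for the nerves with rich intersection structure (showing that the no-wheel hypothesis excludes exactly the convexity-destroying omissions, and building explicit convex covers for every surviving configuration) are precisely the content that is missing, both from your writeup and from the literature. In particular, non-minimal L24 codes and all of L25--L28 are untouched, and nobody currently knows whether wheels beyond sprockets can occur there or whether further obstructions exist. So as a proof of the stated conjecture there is a genuine gap: the hardest cases are named but not resolved.

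That said, your plan for the tractable cases does track the paper's actual strategy closely: stratify by the nerve $\mathcal{N}(\mathcal{F}(\C))$ of the maximal codewords, use max-intersection-completeness (Lemma~\ref{Lem:max-intersection}) where the no-local-obstruction hypothesis forces all facet intersections into the code, and exhibit sprockets where convexity fails. One correction to your dichotomy, though: the block of nerves where ``no local obstructions'' alone characterizes convexity is \emph{not} the same as the block where the code is forced to be max-intersection-complete. For L18, L21, and L22, a code with no local obstructions can omit a mandatory-looking triple intersection (when the relevant three facets satisfy the Path-of-Facets Condition, the link of $F_1\cap F_2\cap F_3$ is contractible, so that face is not mandatory), and the paper must then construct convex realizations by hand, gluing one-dimensional Path-of-Facets realizations of subcodes (Lemmas~\ref{lem:L18},~\ref{L21}, Theorem~\ref{thm: l22}). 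Wheels genuinely ``never enter the picture'' for L9--L23, but establishing that requires these explicit gluing constructions, not just the max-intersection-complete criterion. Your proposal would need to supply those constructions to cover even the cases the paper does resolve.
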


This article makes significant progress toward resolving Conjecture~\ref{conj-Jeffs}, as summarized in Table~\ref{tab:summary_of_results}.  Our approach is to split all codes with $4$ maximal codewords into $20$ groups based on how the maximal codewords intersect each other, or in other words, based on the \emph{nerve} of the set of maximal codewords. 
In other words, these $20$ groups correspond to the possible simplicial complexes on exactly $4$ vertices; these are labeled by L9 through L28 in Figure~\ref{fig:image-simplicial-complex} (L1 through L8 correspond to simplicial complexes with up to $3$ vertices).  One of our main results (Corollary~\ref{cor:conjecture}) states that Conjecture~\ref{conj-Jeffs} holds for all cases from L9 to L23 (with prior results, as mentioned earlier, handling cases L1 to L8).

The unresolved cases therefore are those from L24 to L28. 
Nonetheless, we are able to resolve some but not all cases of L24 (Theorem~\ref{thm:L24-summary}).  Additionally, we prove a general result that asserts that Conjecture~\ref{conj-Jeffs} extends to all codes (with any number of maximal codewords) for which the nerve of the set of maximal codewords has no $2$-simplices (that is, consists only of vertices and edges); see Proposition~\ref{prop:notriangle}.

\begin{table}[]
    \noindent\makebox[\textwidth]{
    \begin{tabular}{|ccc|} \hline
      Case   & Convexity criterion & Result \\ \hline
      L1--L8
        & Convex $\Leftrightarrow$ no local obstructions
        & 
        \cite{Shiu}
        (see Proposition~\ref{prop:3-max})
        \\
      L9--L12, L15--L16   & Convex $\Leftrightarrow$ no local obstructions & 
      Proposition~\ref{prop:disconnected-nerve}
      \\ 
      L13, L14, L17, L19, L20, L23 &  Convex $\Leftrightarrow$ no local obstructions  & Corollary~\ref{cor:no-triangle}\\
      L18, L21 & Convex $\Leftrightarrow$ no local obstructions & Theorem \ref{thm: l18-l21}  \\
      L22 & Convex $\Leftrightarrow$ no local obstructions & Theorem \ref{thm: l22} \\
      L24 -- minimal codes & Convex $\Leftrightarrow$ no local obstructions \& no wheels & Theorem \ref{thm: sprocket-L24}
      \\
      \hline
    \end{tabular}
    }
    \caption{Summary of main results.  This table lists the cases of neural codes with up to four maximal codewords for which we can characterize convexity.  These cases are listed by the nerve of the set of maximal codewords (with labels as in Figure~\ref{fig:image-simplicial-complex}), together with the corresponding characterization of convexity.  In particular, Conjecture \ref{conj-Jeffs} holds for all cases from L1 to L23, and for minimal codes in the case of L24.   
    }  \label{tab:summary_of_results}
\end{table}

This article is structured as follows.
Section~\ref{sec:Background} contains the definitions and prior results necessary for understanding the main results in our work. Our main results are stated and proven in Section~\ref{sec:results}.
Finally, we give some concluding remarks in Section~\ref{sec:discussion}.

\section{Preliminaries} \label{sec:Background}

This section recalls definitions pertaining to neural codes (Section~\ref{sec:codes-background}), 
criteria for confirming or precluding convexity of neural codes (Sections~\ref{sec:yes-convex} and~\ref{sec:no-convex}), and results on the convexity of neural codes with few maximal codewords (Section~\ref{sec:few-max}).
We begin by recalling the definition of a simplicial complex (and its faces).

\begin{defn} \label{def:simplicial-complex}
A {\em simplicial complex} $\Delta$ on a nonempty finite set $V$ is a nonempty collection of subsets of~$V$ that is closed with respect to containment (that is, if $\sigma' \subseteq \sigma \in \Delta $, then $\sigma' \in \Delta$). Here, the set $V$ is the {\em vertex set} of $\Delta$.
If $\Delta$ is a simplicial complex, then 
\begin{enumerate}
    \item every $\sigma \in \Delta$ is a {\em face} of $\Delta$;
    \item if $\sigma$ is a face of $\Delta$, then $\sigma$ is a {\em $k$-simplex}, where $|\sigma| = k+1$;
    and 
    \item the faces of $\Delta$ that are maximal with respect to inclusion are the {\em facets} of $\Delta$.
\end{enumerate}
\end{defn}

The simplicial complexes we consider in this work are on two types of vertex sets: the set $[n] := \{1,2,\hdots, n\}$, where $n$ is a positive integer, and subsets of $\{F_1,F_2,F_3,F_4\}$, where the $F_i$ represent facets of another simplicial complex. Geometric realizations of all simplicial complexes (up to symmetry) on vertex sets of size $1$, $2$, $3$, or $4$ are shown in Figure~\ref{fig:image-simplicial-complex}.

\begin{figure}[htp]
    \centering
\begin{tabular}{|c|c|c|c|}
    \hline 
    \begin{subfigure}[t]{.2\textwidth}
        \centering
        \captionsetup{margin={-5pt,0pt}, slc=off, skip=-87.25pt}
        \begin{tikzpicture}
            \draw[draw=none, fill=none] (0,1) circle (1pt) node[anchor=south] {\vphantom{$F_8$}};
            \draw[draw=none, fill=none] (0,-1) circle (1pt) node[anchor=north] {\vphantom{$F_9$}};
            \filldraw[black] (0,-.25) circle (1pt) node[anchor=south]{\small $F_1$};
            \draw[draw=none] (0,.75)--(0,-1);
        \end{tikzpicture}
        \subcaption*{L1}
    \end{subfigure}&
    \begin{subfigure}[t]{.2\textwidth}
        \centering
        \captionsetup{margin={-3.5pt,0pt}, slc=off, skip=-87.25pt}
        \begin{tikzpicture}
            \draw[draw=none, fill=none] (0,1) circle (1pt) node[anchor=south] {\vphantom{$F_8$}};
            \draw[draw=none, fill=none] (0,-1) circle (1pt) node[anchor=north] {\vphantom{$F_9$}};
            \filldraw[black] (-.6,.2) circle (1pt) node[anchor=south]{\small $F_1$};
            \filldraw[black] (-1.35,-.55) circle (1pt) node[anchor=east]{\small $F_2$};
        \end{tikzpicture}
        \subcaption*{L2}
    \end{subfigure}&
    \begin{subfigure}[t]{.2\textwidth}
        \centering
        \captionsetup{margin={-3.5pt,0pt}, slc=off, skip=-87.25pt}
        \begin{tikzpicture}
            \draw[draw=none, fill=none] (0,1) circle (1pt) node[anchor=south] {\vphantom{$F_8$}};
            \draw[draw=none, fill=none] (0,-1) circle (1pt) node[anchor=north] {\vphantom{$F_9$}};
            \filldraw[black] (-.6,.2) circle (1pt) node[anchor=south]{\small $F_1$};
            \filldraw[black] (-1.35,-.55) circle (1pt) node[anchor=east]{\small $F_2$};
            \draw[thick, black] (-.6,.2)--(-1.35,-.55);
        \end{tikzpicture}
        \subcaption*{L3}
    \end{subfigure}&
    \begin{subfigure}[t]{.2\textwidth}
        \centering
        \captionsetup{margin={-3.5pt,0pt}, slc=off, skip=-87.25pt}
        \begin{tikzpicture}
            \draw[draw=none, fill=none] (0,1) circle (1pt) node[anchor=south] {\vphantom{$F_8$}};
            \draw[draw=none, fill=none] (0,-1) circle (1pt) node[anchor=north] {\vphantom{$F_9$}};
            
	        \filldraw[black] (0,.2) circle (1pt) node[anchor=south]{\small $F_1$};
			\filldraw[black] (.75,-.6) circle (1pt) node[anchor=west]{\small $F_3$};
			\filldraw[black] (-.75,-.6) circle (1pt) node[anchor=east]{\small $F_2$};
        \end{tikzpicture}
        \subcaption*{L4}
    \end{subfigure}\\
    \hline
    \begin{subfigure}[t]{.2\textwidth}
        \centering
        \captionsetup{margin={-3.5pt,0pt}, slc=off, skip=-87.25pt}
        \begin{tikzpicture}
            \draw[draw=none, fill=none] (0,.875) circle (1pt) node[anchor=south] {\vphantom{$F_8$}}; 
            \draw[draw=none, fill=none] (0,-1.125) circle (1pt) node[anchor=north] {\vphantom{$F_9$}}; 
            \filldraw[black] (0,.2) circle (1pt) node[anchor=south]{\small $F_1$};
			\filldraw[black] (.75,-.6) circle (1pt) node[anchor=west]{\small $F_3$};
			\filldraw[black] (-.75,-.6) circle (1pt) node[anchor=east]{\small $F_2$};
            \draw[thick, black] (0, .2)--(-.75,-.6);
        \end{tikzpicture}
        \subcaption*{L5}
    \end{subfigure}&
    \begin{subfigure}[t]{.2\textwidth}
        \centering
        \captionsetup{margin={-3.5pt,0pt}, slc=off, skip=-87.25pt}
        \begin{tikzpicture}
            \draw[draw=none, fill=none] (0,.875) circle (1pt) node[anchor=south] {\vphantom{$F_8$}}; 
            \draw[draw=none, fill=none] (0,-1.125) circle (1pt) node[anchor=north] {\vphantom{$F_9$}}; 
	        \filldraw[black] (0,.2) circle (1pt) node[anchor=south]{\small $F_1$};
			\filldraw[black] (.75,-.6) circle (1pt) node[anchor=west]{\small $F_3$};
			\filldraw[black] (-.75,-.6) circle (1pt) node[anchor=east]{\small $F_2$};
            \draw[thick, black] (0, .2)--(-.75,-.6);
            \draw[thick, black] (0,.2)--(.75,-.6);
        \end{tikzpicture}
        \subcaption*{L6}
    \end{subfigure}&
    \begin{subfigure}[t]{.2\textwidth}
        \centering
        \captionsetup{margin={-3.5pt,0pt}, slc=off, skip=-87.25pt}
        \begin{tikzpicture}
            \draw[draw=none, fill=none] (0,.875) circle (1pt) node[anchor=south] {\vphantom{$F_8$}}; 
            \draw[draw=none, fill=none] (0,-1.125) circle (1pt) node[anchor=north] {\vphantom{$F_9$}}; 
	        \filldraw[black] (0,.2) circle (1pt) node[anchor=south]{\small $F_1$};
			\filldraw[black] (.75,-.6) circle (1pt) node[anchor=west]{\small $F_3$};
			\filldraw[black] (-.75,-.6) circle (1pt) node[anchor=east]{\small $F_2$};
            \draw[thick, black] (0, .2)--(-.75,-.6);
            \draw[thick, black] (0,.2)--(.75,-.6);
            \draw[thick, black] (.75,-.6)--(-.75,-.6);
        \end{tikzpicture}
        \subcaption*{L7}
    \end{subfigure}&
    \begin{subfigure}[t]{.2\textwidth}
        \centering
        \captionsetup{margin={-3.5pt,0pt}, slc=off, skip=-87.25pt}
        \begin{tikzpicture}
            \draw[draw=none, fill=none] (0,.875) circle (1pt) node[anchor=south] {\vphantom{$F_8$}}; 
            \draw[draw=none, fill=none] (0,-1.125) circle (1pt) node[anchor=north] {\vphantom{$F_9$}}; 
	        \filldraw[black] (0,.2) circle (1pt) node[anchor=south]{\small $F_1$};
			\filldraw[black] (.75,-.6) circle (1pt) node[anchor=west]{\small $F_3$};
			\filldraw[black] (-.75,-.6) circle (1pt) node[anchor=east]{\small $F_2$};
            \draw [thick, draw=black, fill=gray, fill opacity=0.5] (0,.2) -- (.75,-.6) -- (-.75,-.6) -- cycle;
        \end{tikzpicture}
        \subcaption*{L8}
    \end{subfigure}\\
    \hline 
    \begin{subfigure}[t]{.2\textwidth}
        \centering
        \captionsetup{margin={-3.5pt,0pt}, slc=off, skip=-87.25pt}
        \begin{tikzpicture}
            \draw[draw=none, fill=none] (0,1.125) circle (1pt) node[anchor=south] {\vphantom{$F_8$}}; 
            \draw[draw=none, fill=none] (0,-.875) circle (1pt) node[anchor=north] {\vphantom{$F_9$}}; 
	        \filldraw[black] (0,.75) circle (1pt) node[anchor=south]{\small $F_1$};
			\filldraw[black] (.75,0) circle (1pt) node[anchor=west]{\small $F_3$};
			\filldraw[black] (0,-.75) circle (1pt) node[anchor=north]{\small $F_4$};
			\filldraw[black] (-.75,0) circle (1pt) node[anchor=east]{\small $F_2$};
            \draw[draw=none] (0,.75)--(0,-1);
        \end{tikzpicture}
        \subcaption*{L9}
    \end{subfigure}&
    \begin{subfigure}[t]{.2\textwidth}
        \centering
        \captionsetup{margin={-3.5pt,0pt}, slc=off, skip=-87.25pt}
        \begin{tikzpicture}
            \draw[draw=none, fill=none] (0,1.125) circle (1pt) node[anchor=south] {\vphantom{$F_8$}}; 
            \draw[draw=none, fill=none] (0,-.875) circle (1pt) node[anchor=north] {\vphantom{$F_9$}}; 
	        \filldraw[black] (0,.75) circle (1pt) node[anchor=south]{\small $F_1$};
			\filldraw[black] (.75,0) circle (1pt) node[anchor=west]{\small $F_3$};
			\filldraw[black] (0,-.75) circle (1pt) node[anchor=north]{\small $F_4$};
			\filldraw[black] (-.75,0) circle (1pt) node[anchor=east]{\small $F_2$};
            \draw[thick, black] (0, .75)--(-.75,0);
            \draw[draw=none] (0,.75)--(0,-1);
        \end{tikzpicture}
        \subcaption*{L10}
    \end{subfigure}&
    \begin{subfigure}[t]{.2\textwidth}
        \centering
        \captionsetup{margin={-3.5pt,0pt}, slc=off, skip=-87.25pt}
        \begin{tikzpicture}
            \draw[draw=none, fill=none] (0,1.125) circle (1pt) node[anchor=south] {\vphantom{$F_8$}}; 
            \draw[draw=none, fill=none] (0,-.875) circle (1pt) node[anchor=north] {\vphantom{$F_9$}}; 
	        \filldraw[black] (0,.75) circle (1pt) node[anchor=south]{\small $F_1$};
			\filldraw[black] (.75,0) circle (1pt) node[anchor=west]{\small $F_3$};
			\filldraw[black] (0,-.75) circle (1pt) node[anchor=north]{\small $F_4$};
			\filldraw[black] (-.75,0) circle (1pt) node[anchor=east]{\small $F_2$};
            \draw[thick, black] (0, .75)--(-.75,0);
            \draw[thick, black] (0, .75)--(.75,0);
            \draw[draw=none] (0,.75)--(0,-1);
        \end{tikzpicture}
        \subcaption*{L11}
    \end{subfigure}&
    \begin{subfigure}[t]{.2\textwidth}
        \centering
        \captionsetup{margin={-3.5pt,0pt}, slc=off, skip=-87.25pt}
        \begin{tikzpicture}
            \draw[draw=none, fill=none] (0,1.125) circle (1pt) node[anchor=south] {\vphantom{$F_8$}}; 
            \draw[draw=none, fill=none] (0,-.875) circle (1pt) node[anchor=north] {\vphantom{$F_9$}}; 
	        \filldraw[black] (0,.75) circle (1pt) node[anchor=south]{\small $F_1$};
			\filldraw[black] (.75,0) circle (1pt) node[anchor=west]{\small $F_3$};
			\filldraw[black] (0,-.75) circle (1pt) node[anchor=north]{\small $F_4$};
			\filldraw[black] (-.75,0) circle (1pt) node[anchor=east]{\small $F_2$};
            \draw[thick, black] (0, .75)--(-.75,0);
            \draw[thick, black] (0, -.75)--(.75,0);
            \draw[draw=none] (0,.75)--(0,-1);
        \end{tikzpicture}
        \subcaption*{L12}
    \end{subfigure}\\
    \hline
    \begin{subfigure}[t]{.2\textwidth}
        \centering
        \captionsetup{margin={-3.5pt,0pt}, slc=off, skip=-87.25pt}
        \begin{tikzpicture}
            \draw[draw=none, fill=none] (0,1.125) circle (1pt) node[anchor=south] {\vphantom{$F_8$}}; 
            \draw[draw=none, fill=none] (0,-.875) circle (1pt) node[anchor=north] {\vphantom{$F_9$}}; 
	        \filldraw[black] (0,.75) circle (1pt) node[anchor=south]{\small $F_1$};
			\filldraw[black] (.75,0) circle (1pt) node[anchor=west]{\small $F_3$};
			\filldraw[black] (0,-.75) circle (1pt) node[anchor=north]{\small $F_4$};
			\filldraw[black] (-.75,0) circle (1pt) node[anchor=east]{\small $F_2$};
            \draw[thick, black] (0, .75)--(-.75,0);
            \draw[thick, black] (0, -.75)--(.75,0);
            \draw[thick, black] (0, .75)--(.75,0);
            \draw[draw=none] (0,.75)--(0,-1);
        \end{tikzpicture}
        \subcaption*{L13}
    \end{subfigure}&
    \begin{subfigure}[t]{.2\textwidth}
        \centering
        \captionsetup{margin={-3.5pt,0pt}, slc=off, skip=-87.25pt}
        \begin{tikzpicture}
            \draw[draw=none, fill=none] (0,1.125) circle (1pt) node[anchor=south] {\vphantom{$F_8$}}; 
            \draw[draw=none, fill=none] (0,-.875) circle (1pt) node[anchor=north] {\vphantom{$F_9$}}; 
	        \filldraw[black] (0,.75) circle (1pt) node[anchor=south]{\small $F_1$};
			\filldraw[black] (.75,0) circle (1pt) node[anchor=west]{\small $F_3$};
			\filldraw[black] (0,-.75) circle (1pt) node[anchor=north]{\small $F_4$};
			\filldraw[black] (-.75,0) circle (1pt) node[anchor=east]{\small $F_2$};
            \draw[thick, black] (0, .75)--(-.75,0);
            \draw[thick, black] (0, -.75)--(0,.75);
            \draw[thick, black] (0, .75)--(.75,0);
            \draw[draw=none] (0,.75)--(0,-1);
        \end{tikzpicture}
        \subcaption*{L14}
    \end{subfigure}&
    \begin{subfigure}[t]{.2\textwidth}
        \centering
        \captionsetup{margin={-3.5pt,0pt}, slc=off, skip=-87.25pt}
        \begin{tikzpicture}
            \draw[draw=none, fill=none] (0,1.125) circle (1pt) node[anchor=south] {\vphantom{$F_8$}}; 
            \draw[draw=none, fill=none] (0,-.875) circle (1pt) node[anchor=north] {\vphantom{$F_9$}}; 
	        \filldraw[black] (0,.75) circle (1pt) node[anchor=south]{\small $F_1$};
			\filldraw[black] (.75,0) circle (1pt) node[anchor=west]{\small $F_3$};
			\filldraw[black] (0,-.75) circle (1pt) node[anchor=north]{\small $F_4$};
			\filldraw[black] (-.75,0) circle (1pt) node[anchor=east]{\small $F_2$};
            \draw[thick, black] (0, .75)--(-.75,0);
            \draw[thick, black] (.75,0)--(-.75, 0);
            \draw[thick, black] (0, .75)--(.75,0);
        \end{tikzpicture}
        \subcaption*{L15}
    \end{subfigure}&
    \begin{subfigure}[t]{.2\textwidth}
        \centering
        \captionsetup{margin={-3.5pt,0pt}, slc=off, skip=-87.25pt}
        \begin{tikzpicture}
            \draw[draw=none, fill=none] (0,1.125) circle (1pt) node[anchor=south] {\vphantom{$F_8$}}; 
            \draw[draw=none, fill=none] (0,-.875) circle (1pt) node[anchor=north] {\vphantom{$F_9$}}; 
	        \filldraw[black] (0,.75) circle (1pt) node[anchor=south]{\small $F_1$};
			\filldraw[black] (.75,0) circle (1pt) node[anchor=west]{\small $F_3$};
			\filldraw[black] (0,-.75) circle (1pt) node[anchor=north]{\small $F_4$};
			\filldraw[black] (-.75,0) circle (1pt) node[anchor=east]{\small $F_2$};
            \draw [thick, draw=black, fill=gray, fill opacity=0.5] (0,.75) -- (.75,0) -- (-.75,0) -- cycle;
        \end{tikzpicture}
        \subcaption*{L16}
    \end{subfigure}\\
    \hline
    \begin{subfigure}[t]{.2\textwidth}
        \centering
        \captionsetup{margin={-3.5pt,0pt}, slc=off, skip=-87.25pt}
        \begin{tikzpicture}
            \draw[draw=none, fill=none] (0,1.125) circle (1pt) node[anchor=south] {\vphantom{$F_8$}}; 
            \draw[draw=none, fill=none] (0,-.875) circle (1pt) node[anchor=north] {\vphantom{$F_9$}}; 
	        \filldraw[black] (0,.75) circle (1pt) node[anchor=south]{\small $F_1$};
			\filldraw[black] (.75,0) circle (1pt) node[anchor=west]{\small $F_3$};
			\filldraw[black] (0,-.75) circle (1pt) node[anchor=north]{\small $F_4$};
			\filldraw[black] (-.75,0) circle (1pt) node[anchor=east]{\small $F_2$};
            \draw[thick, black] (0, .75)--(-.75,0);
            \draw[thick, black] (.75,0)--(-.75, 0);
            \draw[thick, black] (0, .75)--(.75,0);
            \draw[thick, black] (-.75, 0)--(0,-.75);
        \end{tikzpicture}
        \subcaption*{L17}
    \end{subfigure}&
    \begin{subfigure}[t]{.2\textwidth}
        \centering
        \captionsetup{margin={-3.5pt,0pt}, slc=off, skip=-87.25pt}
        \begin{tikzpicture}
            \draw[draw=none, fill=none] (0,1.125) circle (1pt) node[anchor=south] {\vphantom{$F_8$}}; 
            \draw[draw=none, fill=none] (0,-.875) circle (1pt) node[anchor=north] {\vphantom{$F_9$}}; 
	        \filldraw[black] (0,.75) circle (1pt) node[anchor=south]{\small $F_1$};
			\filldraw[black] (.75,0) circle (1pt) node[anchor=west]{\small $F_3$};
			\filldraw[black] (0,-.75) circle (1pt) node[anchor=north]{\small $F_4$};
			\filldraw[black] (-.75,0) circle (1pt) node[anchor=east]{\small $F_2$};
            \draw [thick, draw=black, fill=gray, fill opacity=0.5] (0,.75) -- (.75,0) -- (-.75,0) -- cycle;
            \draw[thick, black] (-.75, 0)--(0,-.75);
        \end{tikzpicture}
        \subcaption*{L18}
    \end{subfigure}&
    \begin{subfigure}[t]{.2\textwidth}
        \centering
        \captionsetup{margin={-3.5pt,0pt}, slc=off, skip=-87.25pt}
        \begin{tikzpicture}
            \draw[draw=none, fill=none] (0,1.125) circle (1pt) node[anchor=south] {\vphantom{$F_8$}}; 
            \draw[draw=none, fill=none] (0,-.875) circle (1pt) node[anchor=north] {\vphantom{$F_9$}}; 
	        \filldraw[black] (0,.75) circle (1pt) node[anchor=south]{\small $F_1$};
			\filldraw[black] (.75,0) circle (1pt) node[anchor=west]{\small $F_3$};
			\filldraw[black] (0,-.75) circle (1pt) node[anchor=north]{\small $F_4$};
			\filldraw[black] (-.75,0) circle (1pt) node[anchor=east]{\small $F_2$};
            \draw[thick, black] (0, .75)--(-.75,0);
            \draw[thick, black] (0, .75)--(.75,0);
            \draw[thick, black] (-.75, 0)--(0,-.75);
            \draw[thick, black] (0,-.75) -- (.75,0);
        \end{tikzpicture}
        \subcaption*{L19}
    \end{subfigure}&
    \begin{subfigure}[t]{.2\textwidth}
        \centering
        \captionsetup{margin={-3.5pt,0pt}, slc=off, skip=-87.25pt}
        \begin{tikzpicture}
            \draw[draw=none, fill=none] (0,1.125) circle (1pt) node[anchor=south] {\vphantom{$F_8$}}; 
            \draw[draw=none, fill=none] (0,-.875) circle (1pt) node[anchor=north] {\vphantom{$F_9$}}; 
	        \filldraw[black] (0,.75) circle (1pt) node[anchor=south]{\small $F_1$};
			\filldraw[black] (.75,0) circle (1pt) node[anchor=west]{\small $F_3$};
			\filldraw[black] (0,-.75) circle (1pt) node[anchor=north]{\small $F_4$};
			\filldraw[black] (-.75,0) circle (1pt) node[anchor=east]{\small $F_2$};
            \draw[thick, black] (0, .75)--(-.75,0);
            \draw[thick, black] (0, .75)--(.75,0);
            \draw[thick, black] (0,-.75) -- (.75,0);
            \draw[dashed, thick, black] (.75,0)--(-.75,0);
            \draw[thick, black] (0,.75)--(0,-.75);
        \end{tikzpicture}
        \subcaption*{L20}
    \end{subfigure}\\
    \hline
    \begin{subfigure}[t]{.2\textwidth}
        \centering
        \captionsetup{margin={-3.5pt,0pt}, slc=off, skip=-87.25pt}
        \begin{tikzpicture}
            \draw[draw=none, fill=none] (0,1.125) circle (1pt) node[anchor=south] {\vphantom{$F_8$}}; 
            \draw[draw=none, fill=none] (0,-.875) circle (1pt) node[anchor=north] {\vphantom{$F_9$}}; 
	        \filldraw[black] (0,.75) circle (1pt) node[anchor=south]{\small $F_1$};
			\filldraw[black] (.75,0) circle (1pt) node[anchor=west]{\small $F_3$};
			\filldraw[black] (0,-.75) circle (1pt) node[anchor=north]{\small $F_4$};
			\filldraw[black] (-.75,0) circle (1pt) node[anchor=east]{\small $F_2$};
            \draw [thick, draw=black, fill=gray, fill opacity=0.5] (0,.75) -- (.75,0) -- (-.75,0) -- cycle;
            \draw[thick, black] (-.75, 0)--(0,-.75);
            \draw[thick, black] (.75,0)--(0,-.75);
        \end{tikzpicture}
        \subcaption*{L21}
    \end{subfigure}&
    \begin{subfigure}[t]{.2\textwidth}
        \centering
        \captionsetup{margin={-3.5pt,0pt}, slc=off, skip=-87.25pt}
        \begin{tikzpicture}
            \draw[draw=none, fill=none] (0,1.125) circle (1pt) node[anchor=south] {\vphantom{$F_8$}}; 
            \draw[draw=none, fill=none] (0,-.875) circle (1pt) node[anchor=north] {\vphantom{$F_9$}}; 
            \filldraw[black] (0,.75) circle (1pt) node[anchor=south]{\small $F_1$};
            \filldraw[black] (.75,0) circle (1pt) node[anchor=west]{\small $F_3$};
            \filldraw[black] (0,-.75) circle (1pt) node[anchor=north]{\small $F_4$};
			\filldraw[black] (-.75,0) circle (1pt) node[anchor=east]{\small $F_2$};
            \draw [thick, draw=black, fill=gray, fill opacity=0.5] (0,.75) -- (.75,0) -- (-.75,0) -- cycle;
            \draw [thick, draw=black, fill=gray, fill opacity=0.5] (0,-.75)--(.75,0)--(-.75,0)--cycle;    
        \end{tikzpicture}
        \subcaption*{L22}
    \end{subfigure}&
    \begin{subfigure}[t]{.2\textwidth}
        \centering
        \captionsetup{margin={-3.5pt,0pt}, slc=off, skip=-87.25pt}
        \begin{tikzpicture}
            \draw[draw=none, fill=none] (0,1.125) circle (1pt) node[anchor=south] {\vphantom{$F_8$}}; 
            \draw[draw=none, fill=none] (0,-.875) circle (1pt) node[anchor=north] {\vphantom{$F_9$}}; 
	        \filldraw[black] (0,.75) circle (1pt) node[anchor=south]{\small $F_1$};
			\filldraw[black] (.75,0) circle (1pt) node[anchor=west]{\small $F_3$};
			\filldraw[black] (0,-.75) circle (1pt) node[anchor=north]{\small $F_4$};
			\filldraw[black] (-.75,0) circle (1pt) node[anchor=east]{\small $F_2$};
            \draw[thick, black] (0, .75)--(-.75,0);
            \draw[thick, black] (0, .75)--(.75,0);
            \draw[thick, black] (0,-.75) -- (.75,0);
            \draw[thick, black, dashed] (.75,0)--(-.75,0);
            \draw[thick, black] (0,.75)--(0,-.75);
            \draw[thick, black] (0,-.75) -- (-.75,0);
        \end{tikzpicture}
        \subcaption*{L23}
    \end{subfigure}&
    \begin{subfigure}[t]{.2\textwidth}
        \centering
        \captionsetup{margin={-3.5pt,0pt}, slc=off, skip=-87.25pt}
        \begin{tikzpicture}
            \draw[draw=none, fill=none] (0,1.125) circle (1pt) node[anchor=south] {\vphantom{$F_8$}}; 
            \draw[draw=none, fill=none] (0,-.875) circle (1pt) node[anchor=north] {\vphantom{$F_9$}}; 
	        \filldraw[black] (0,.75) circle (1pt) node[anchor=south]{\small $F_1$};
			\filldraw[black] (.75,0) circle (1pt) node[anchor=west]{\small $F_3$};
			\filldraw[black] (0,-.75) circle (1pt) node[anchor=north]{\small $F_4$};
			\filldraw[black] (-.75,0) circle (1pt) node[anchor=east]{\small $F_2$};
            \draw [thick, dashed, draw=black, fill=gray, fill opacity=0.5] (0,.75) -- (.75,0) -- (-.75,0) -- cycle;
            \draw[thick, black] (-.75,0)--(0,.75)--(.75,0);
            \draw[thick, black] (-.75, 0)--(0,-.75);
            \draw[thick, black] (.75,0)--(0,-.75);
            \draw[thick, black] (0,.75)--(0,-.75);
        \end{tikzpicture}
        \subcaption*{L24}
    \end{subfigure} \\
    \hline
    \begin{subfigure}[t]{.2\textwidth}
        \centering
        \captionsetup{margin={-3.5pt,0pt}, slc=off, skip=-87.25pt}
        \begin{tikzpicture}
            \draw[draw=none, fill=none] (0,1.125) circle (1pt) node[anchor=south] {\vphantom{$F_8$}}; 
            \draw[draw=none, fill=none] (0,-.875) circle (1pt) node[anchor=north] {\vphantom{$F_9$}}; 
	        \filldraw[black] (0,.75) circle (1pt) node[anchor=south]{\small $F_1$};
			\filldraw[black] (.75,0) circle (1pt) node[anchor=west]{\small $F_3$};
			\filldraw[black] (0,-.75) circle (1pt) node[anchor=north]{\small $F_4$};
			\filldraw[black] (-.75,0) circle (1pt) node[anchor=east]{\small $F_2$};
            \draw [thick, dashed, draw=black, fill=gray, fill opacity=0.5] (0,.75) -- (.75,0) -- (-.75,0) -- cycle;
            \draw [thick, draw=black, fill=gray, fill opacity=0.9] (0,.75) -- (.75,0) -- (0, -.75) -- cycle;
            \draw[thick, black] (-.75,0)--(0,.75)--(.75,0);
            \draw[thick, black] (-.75, 0)--(0,-.75);
            \draw[thick, black] (.75,0)--(0,-.75);
            \draw[thick, black] (0,.75)--(0,-.75);
        \end{tikzpicture}
        \subcaption*{L25}
    \end{subfigure}&
    \begin{subfigure}[t]{.2\textwidth}
        \centering
        \captionsetup{margin={-3.5pt,0pt}, slc=off, skip=-87.25pt}
        \begin{tikzpicture}
            \draw[draw=none, fill=none] (0,1.125) circle (1pt) node[anchor=south] {\vphantom{$F_8$}}; 
            \draw[draw=none, fill=none] (0,-.875) circle (1pt) node[anchor=north] {\vphantom{$F_9$}}; 
	        \filldraw[black] (0,.75) circle (1pt) node[anchor=south]{\small $F_1$};
			\filldraw[black] (.75,0) circle (1pt) node[anchor=west]{\small $F_3$};
			\filldraw[black] (0,-.75) circle (1pt) node[anchor=north]{\small $F_4$};
			\filldraw[black] (-.75,0) circle (1pt) node[anchor=east]{\small $F_2$};
            \draw[thick, black] (0,.75)--(.75,0)--(0,-.75)--cycle;
            \draw[thick, black] (0,.75)--(-.75,0)--(0,-.75);
            \draw[very thin, dashed] (.75,0)--(-.75,0);
            \fill [fill=gray, fill opacity=0.65] (0,.75) -- (.75,0) -- (-.75,0) -- cycle;
            \fill [ fill=gray, fill opacity=0.8] (0,.75) -- (.75,0) -- (0, -.75) -- cycle;
            \fill [fill=gray, fill opacity=0.35] (0,-.75)--(-.75,0)--(.75,0)--cycle;
            \draw[thick, black] (0,.75)--(.75,0)--(0,-.75)--cycle;
        \end{tikzpicture}
        \subcaption*{L26}
    \end{subfigure}&
    \begin{subfigure}[t]{.2\textwidth}
        \centering
        \captionsetup{margin={-3.5pt,0pt}, slc=off, skip=-87.25pt}
        \begin{tikzpicture}
            \draw[draw=none, fill=none] (0,1.125) circle (1pt) node[anchor=south] {\vphantom{$F_8$}}; 
            \draw[draw=none, fill=none] (0,-.875) circle (1pt) node[anchor=north] {\vphantom{$F_9$}}; 
	        \filldraw[black] (0,.75) circle (1pt) node[anchor=south]{\small $F_1$};
			\filldraw[black] (.75,0) circle (1pt) node[anchor=west]{\small $F_3$};
			\filldraw[black] (0,-.75) circle (1pt) node[anchor=north]{\small $F_4$};
			\filldraw[black] (-.75,0) circle (1pt) node[anchor=east]{\small $F_2$};
            \draw [thick, dashed, draw=black, fill=gray, fill opacity=0.5] (0,.75) -- (.75,0) -- (-.75,0) -- cycle;
            \draw[thick, black] (-.75,0)--(0,.75)--(.75,0);
            \draw [thick, draw=black, fill=gray, fill opacity=0.5] (0,.75) -- (.75,0) -- (0, -.75) -- cycle;
            \draw[thick, draw=black, fill=gray, fill opacity=0.5] (0,.75)--(-.75,0)--(0,-.75)--cycle;
            \draw[thick, draw=black, dashed, fill=gray, fill opacity=0.5] (.75,0)--(-.75,0)--(0,-.75)--cycle;
            \draw[thick, black] (-.75,0)--(0,-.75)--(.75,0);
        \end{tikzpicture}
        \subcaption*{L27}
    \end{subfigure}&
    \begin{subfigure}[t]{.2\textwidth}
        \centering
        \captionsetup{margin={-3.5pt,0pt}, slc=off, skip=-87.25pt}
        \begin{tikzpicture}
            \draw[draw=none, fill=none] (0,1.125) circle (1pt) node[anchor=south] {\vphantom{$F_8$}}; 
            \draw[draw=none, fill=none] (0,-.875) circle (1pt) node[anchor=north] {\vphantom{$F_9$}}; 
	        \filldraw[black] (0,.75) circle (1pt) node[anchor=south]{\small $F_1$};
			\filldraw[black] (.75,0) circle (1pt) node[anchor=west]{\small $F_3$};
			\filldraw[black] (0,-.75) circle (1pt) node[anchor=north]{\small $F_4$};
			\filldraw[black] (-.75,0) circle (1pt) node[anchor=east]{\small $F_2$};
            \draw [thick, draw=black, fill=black, fill opacity=.75] (0,.75) -- (.75,0) -- (0, -.75) -- cycle;
            \draw[thick, draw=black, fill=black, fill opacity=.75] (0,.75)--(-.75,0)--(0,-.75)--cycle;
            \draw[dashed, very thin] (.75,0)--(-.75,0);
        \end{tikzpicture}
        \subcaption*{L28}
    \end{subfigure} \\
    \hline
    \end{tabular}
    \caption{All simplicial complexes on up to $4$ vertices, up to symmetry. 
    The simplicial complexes are labeled L1 to L28, matching the labels in~\cite{Curto} (in fact, this figure aligns closely with~\cite[Figure~5]{Curto}).
    }
    \label{fig:image-simplicial-complex}
\end{figure}
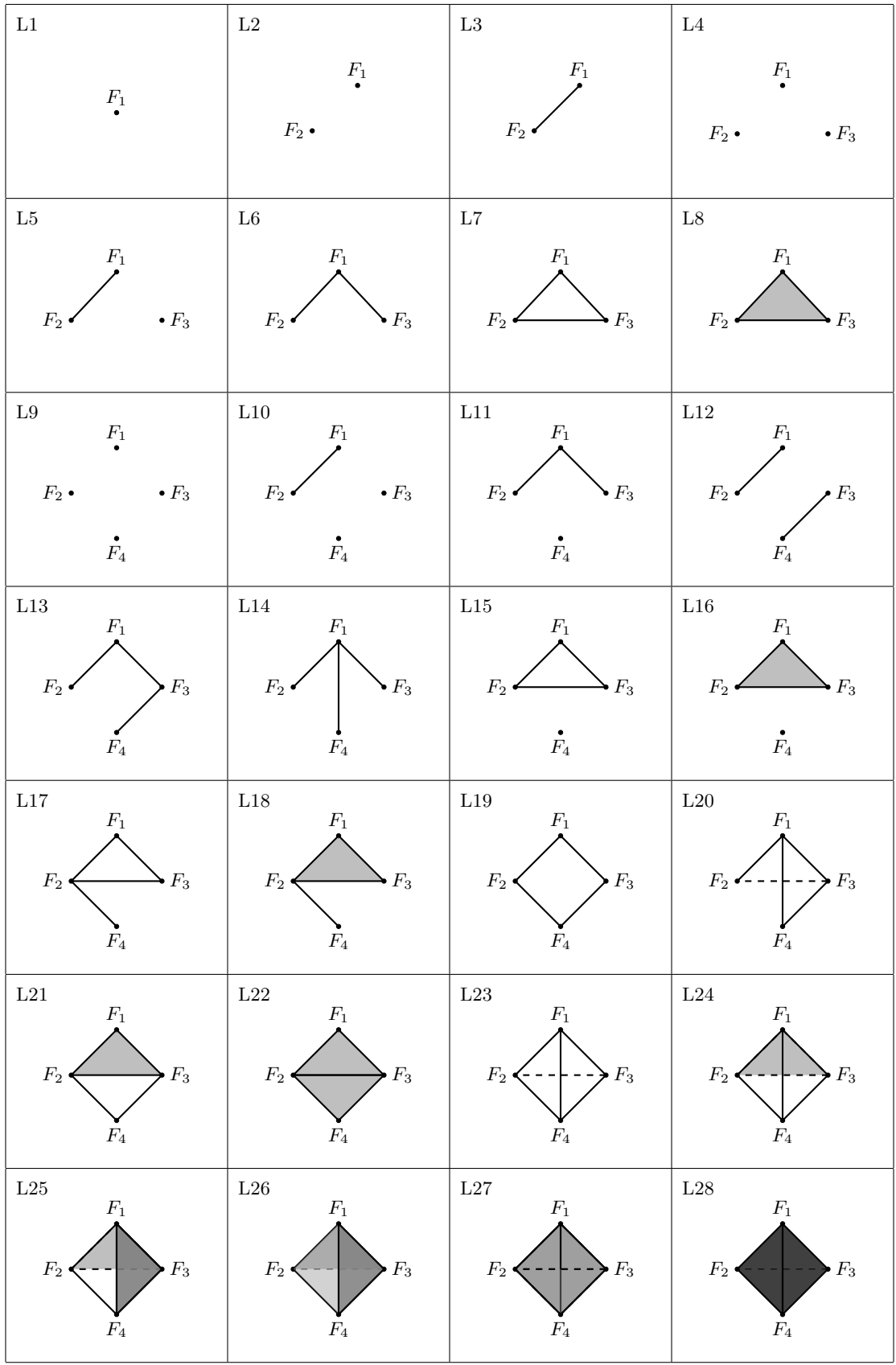

\subsection{Neural codes} \label{sec:codes-background}

\begin{defn}
A \textit{neural code} (or, for short, \textit{code}) $\C$ on $n$ neurons is a collection of subsets $c  \subseteq {[n]}$, called \textit{codewords}. A codeword $c \in \C$ is \textit{maximal} if $c$ is maximal with respect to set inclusion.  The \textit{simplicial complex}, $\Delta(\C)$, of a code $\C$ on $n$ neurons is defined by $\Delta(\C) \coloneqq \{\sigma \subseteq [n] : \sigma \subseteq c\textrm{~for~some~} c \in \C\}$. An \textit{$n$-maximal} code is a code with exactly $n$ maximal codewords. 
\end{defn}
\noindent
By construction, the maximal codewords of a code $\C$ are precisely the facets of the simplicial complex $\Delta(\C)$.

\noindent \textbf{Notation 2.3.} To simplify notation, we write each codeword as a string of integers, rather than in the roster form of a set. For instance, $\{1,2,3,4\}$ is written as $1234$. Additionally, in our examples we use boldface to indicate maximal codewords.

This section has three running examples, which are codes we call $\C_{22}$, $\C_{24}$, and $\C_{26}$. The significance of these labels will be clear at the end of the section (in Examples~\ref{ex:nerve-is-L22}--\ref{ex:nerve-is-L26}).

\begin{ex}[$\C_{22}$] \label{ex:first-example-L22}
The following is a $4$-maximal code on $7$ neurons: 
\begin{align}
    \label{eq:C22}
\C_{22} ~:=~ \{ 
\mathbf{134}, \mathbf{1357}, \mathbf{257},\mathbf{356},
13, 35, 57, \varnothing\}~.
\end{align}
The (geometric realization of the) simplicial complex $\Delta(\C_{22})$ is shown here:
   \begin{center}
                  \begin{tikzpicture}
	   \filldraw[black] (0,.75) circle (1pt) node[anchor=south] {$7$};
			\filldraw[black] (.75,0) circle (1pt) node[anchor=west] {$1$};
			\filldraw[black] (-.75,0) circle (1pt) node[anchor=east]{$5$};
			\filldraw[black] (0,-.75) circle (1pt) node[anchor=north] {$3$};
			\filldraw[black] (1.25,-1.25) circle (1pt) node[anchor=north west] {$4$};
			\filldraw[black] (-1.25,1.25) circle (1pt) node[anchor=north east] {$2$};
			\filldraw[black] (-1.25,-1.25) circle (1pt) node[anchor=north east] {$6$};
            \draw [thick, draw=black, fill=black, fill opacity=.75] (0,.75) -- (.75,0) -- (0, -.75) -- cycle;
            \draw[thick, draw=black, fill=black, fill opacity=.75] (0,.75)--(-.75,0)--(0,-.75)--cycle;
            \draw[thick, draw=black, fill=gray, fill opacity=.25] (0,-.75)--(.75,0)--(1.25,-1.25)--cycle;
            \draw[thick, draw=black, fill=gray, fill opacity=.25] (0,.75)--(-.75,0)--(-1.25,1.25)--cycle;
            \draw[thick, draw=black, fill=gray, fill opacity=.25] (0,-.75)--(-.75,0)--(-1.25,-1.25)--cycle;
            \draw[dashed, very thin] (.75,0)--(-.75,0);
        \end{tikzpicture}
   \end{center}
The four facets of  $\Delta(\C_{22})$ correspond to the four maximal codewords of $\C_{22}$; these facets are the $3$-simplex (filled-in tetrahedron) $1357$ and the three $2$-simplices (filled-in triangles) $134$, $356$, and $237$.
\end{ex}

Codes can be generated by collections of subsets of Euclidean space, as follows.
\begin{defn} \label{def:realization}
Given a collection $\mathcal{U} = ({U}_i)_{i=1}^n $ of subsets of $\mathbb{R}^d$, for some $d \geq 1$, the \textit{neural code generated by $\mathcal{U}$} is
\begin{equation*}
    \mathcal{C}(\mathcal{U}) ~\coloneqq~ \left\{\sigma \subseteq [n]: \big(\textstyle\bigcap_{i \in \sigma} U_i \big) \smallsetminus \big(\textstyle\bigcup_{j \not\in \sigma} U_j \big) \ne \varnothing\right\}~,
\end{equation*}

\noindent and $\mathcal{U}$ is a {\em realization} of $\mathcal{C}(\mathcal{U})$.  
If $\C = \C(\mathcal{U})$ for a collection  $\mathcal{U} = \{U_i\}_{i=1}^n$ of sets that are convex and open, the code $\C$ is \textit{open convex} (or, for simplicity, \textit{convex}). 
\end{defn}

\begin{assumption}[Empty codeword]
\label{assumption:empty-set}
We will assume that every code contains the empty set as a codeword; this assumption does not affect the convexity of the code \cite[Remark 2.5]{decidability}.  
\end{assumption}

\begin{remark} \label{rem:open-vs-closed}
Some prior works consider realizations of convex sets that are \uline{closed} (see, for instance,~\cite{nondegen,Giusti}), in addition to those that are open.
In our work, however, we restrict our attention to open (convex) sets.
\end{remark}

\begin{remark}[Labels in realizations] 
In our depictions of convex realizations $\mathcal{U} = (U_i)_{i=1}^{n}$, we label each region 
$\big(\textstyle\bigcap_{i \in \sigma} U_i \big) \smallsetminus \big(\textstyle\bigcup_{j \not\in \sigma} U_j \big)$
by the corresponding codeword, $\sigma \in \C(\mathcal{U})$.  (The exception is for the empty codeword $\sigma = \varnothing$; we do not label the corresponding region which is simply the complement of the union of all other regions.)
Accordingly, for any neuron $i$, the corresponding subset $U_i$ is the union of the regions labeled by codewords containing $i$ (more precisely, $U_i$ is the interior of the union of the closures of all regions $\big(\textstyle\bigcap_{i \in \sigma} U_i \big) \smallsetminus \big(\textstyle\bigcup_{j \not\in \sigma} U_j \big)$ for which $i \in \sigma$).  For instance, in Figure~\ref{fig:runningex}, the set $U_1$ is the open rectangle formed by the regions labeled by $134$, $13$, and $1357$.
\end{remark}

\begin{figure}[ht]
    \centering
    \begin{tikzpicture}[scale =2.5]
    \draw (0,0) rectangle (4,1);

    \draw (1,0) -- (1,1);
    \draw (3,-1) -- (3,1);
    \draw (4,0) -- (4,1);
    \draw (5,1)--(3,-1);
    \draw (4,1)--(5,1);
    \draw (4.5, 1)--(4.5,.5);
    \draw (3,-.5)--(3.5,-.5);

    \node at (.5,.5) {$\mathbf{134}$};
    \node at (2,.5) {$13$};
    \node at (3.5,.5) {$\mathbf{1357}$};
    \node at (4.25,.65) {$35$};
    \node at (3.15,-.65) {$\mathbf{257}$};
    \node at (3.4, -.2) {$57$};
    \node at (4.65, .85) {$\mathbf{356}$};
\end{tikzpicture}
    \caption{Convex realization of $\C_{22} = \{
\mathbf{134}, \mathbf{1357},  \mathbf{257},\mathbf{356},
13, 35, 57, \varnothing\}$.}
    \label{fig:runningex}
\end{figure}

\begin{ex}[Example~\ref{ex:first-example-L22} continued]\label{ex: running example L22}
Figure~\ref{fig:runningex} displays a convex realization of the code 
$$\C_{22} ~=~ \{
\mathbf{134}, \mathbf{1357},  \mathbf{257},\mathbf{356},
13, 35, 57, \varnothing\}~.$$ 
Thus, $\C_{22}$ is convex.
\end{ex}

\begin{remark} \label{rem:connect-literature}
    The convex realization in Figure~\ref{fig:runningex} is planar (that is, the sets $U_i$ are open subsets of $\mathbb{R}^2$).  In fact, all realizations appearing in this article are planar (or even one-dimensional).
    An algorithm that determines whether a given convex code has a planar convex realization was given recently by Bukh and Jeffs~\cite{planarcodes-decidable}.  Additionally, many of our realizations use (open) axis-parallel boxes like the realizations in \cite{axis-parallel} and codes of a certain degree in \cite{embedding-dim-gaps}.
\end{remark}

Detecting the convexity of a given neural code is NP-hard~\cite{matroids}. Nevertheless, for some families of codes, there are simple criteria for assessing convexity.  
In what follows, we recall such criteria for confirming convexity (Section~\ref{sec:yes-convex})
and for 
precluding convexity (Section~\ref{sec:no-convex}).

\subsection{Criteria for confirming convexity} \label{sec:yes-convex}
In this subsection, we recall two results (Lemmas~\ref{lem:monotonicity} and~\ref{Lem:max-intersection}).  The first  is a monotonicity result of Cruz {\em et al.}, which states that convexity is maintained when non-maximal codewords are added to a code~\cite[Theorem~1.3]{Cruz}, as follows.

\begin{lemma}[Monotonicity of convexity]\label{lem:monotonicity}
Let $\mathcal{C}, \mathcal{D}$ be codes such that $\mathcal{C} \subseteq \mathcal{D} \subseteq \Delta(\mathcal{C})$. Then if $\mathcal{C}$ is convex, $\mathcal{D}$ is convex. 
\end{lemma}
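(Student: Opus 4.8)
The plan is to show that convexity is preserved under the addition of a single non-maximal codeword, and then to iterate. First, observe that the hypothesis $\mathcal{C} \subseteq \mathcal{D} \subseteq \Delta(\mathcal{C})$ forces $\Delta(\mathcal{D}) = \Delta(\mathcal{C})$: the containment $\mathcal{C} \subseteq \mathcal{D}$ gives $\Delta(\mathcal{C}) \subseteq \Delta(\mathcal{D})$, while $\mathcal{D} \subseteq \Delta(\mathcal{C})$ gives the reverse inclusion since $\Delta(\mathcal{C})$ is already a simplicial complex. Hence every codeword in $\mathcal{D} \setminus \mathcal{C}$ is a non-maximal face of $\Delta(\mathcal{C})$, and it suffices to prove the one-step claim: if $\mathcal{C}$ is convex and $\sigma \in \Delta(\mathcal{C}) \setminus \mathcal{C}$, then $\mathcal{C} \cup \{\sigma\}$ is convex. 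Applying this along a chain $\mathcal{C} = \mathcal{C}_0 \subsetneq \mathcal{C}_1 \subsetneq \dots \subsetneq \mathcal{C}_m = \mathcal{D}$, with $\mathcal{C}_{k+1} = \mathcal{C}_k \cup \{\sigma_k\}$ and $\sigma_k \in \mathcal{D} \setminus \mathcal{C}_k \subseteq \Delta(\mathcal{C}_k) \setminus \mathcal{C}_k$ (note $\Delta(\mathcal{C}_k) = \Delta(\mathcal{C})$ for each $k$), then yields the lemma by induction on $|\mathcal{D} \setminus \mathcal{C}|$.

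For the one-step claim, I would fix a convex open realization $\mathcal{U} = (U_i)_{i=1}^n$ of $\mathcal{C}$ in some $\mathbb{R}^d$; after intersecting every $U_i$ with a sufficiently large open ball (which preserves openness, convexity, and the code), I may assume the realization is bounded, so that the exterior region, i.e.\ the atom of $\varnothing$, is nonempty. Since $\sigma$ is a face of $\Delta(\mathcal{C})$, choose a facet $c$ with $\sigma \subseteq c$; the corresponding atom is nonempty, so pick a point $p$ in it, noting $p \in \bigcap_{i \in \sigma} U_i =: V$. The goal is then to enlarge only the sets $U_i$ with $i \in \sigma$, by attaching to each the \emph{same} convex ``cap'' $D$, so that a new region appears in which exactly the neurons of $\sigma$ fire, while no existing atom is disturbed and no other new codeword is created.

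The construction I would use is as follows. Starting from a boundary point $w$ of the convex open set $V$, I would choose an outward direction pointing into ``free space'' — away from the forbidden sets $U_j$ with $j \notin \sigma$ — and place a small open convex set $D$ just beyond $w$ in that direction, with $D$ disjoint from every $U_j$ ($j \notin \sigma$) yet meeting the boundary vicinity of each $U_i$ ($i \in \sigma$). Replacing each $U_i$ ($i \in \sigma$) by $\operatorname{conv}(U_i \cup D)$ keeps these sets open and convex and adds points only in a neighborhood of $w$ in the chosen direction. Crucially, because the cap $D$ is shared by all $i \in \sigma$, the new region $D \setminus \bigcup_{j \notin \sigma} U_j$ is one in which all of $\sigma$ fires simultaneously, and its outer boundary transitions directly to the exterior $\varnothing$-region (all the sets $U_i$, $i \in \sigma$, leave together across $\partial D$); thus no ``fringe'' realizing a proper, possibly forbidden, subface of $\sigma$ is produced. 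One then checks that the only codeword gained is $\sigma$ and that every codeword of $\mathcal{C}$ survives, giving a convex realization of $\mathcal{C} \cup \{\sigma\}$.

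The hard part will be the step establishing the existence and placement of the cap $D$, and the difficulty is genuinely geometric. A convex set cannot have a wide body together with a thin, far protrusion, so one cannot simply grow a thin ``finger'' from the existing $U_i$ into distant empty space; nor can one carve the new atom out of the atom of $c$ by intersecting some $U_j$ with a half-space, since a half-space cut is global and would destroy far-away atoms. The whole argument therefore hinges on showing that there is always a boundary point $w$ of $V$ and a free outward direction along which the shared cap $D$ can be attached so as to simultaneously (i) avoid all forbidden sets $U_j$, $j \notin \sigma$, (ii) enlarge each $U_i$, $i \in \sigma$, convexly, and (iii) create exactly the codeword $\sigma$ with boundary meeting only the exterior. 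Proving this existence — in effect, that the forbidden convex sets cannot completely wall off $V$ from free space in every direction — is the crux, and I expect it to use the convexity of the $U_i$ essentially, via supporting-hyperplane and separation arguments at $w$.
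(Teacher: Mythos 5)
First, note that the paper does not prove this lemma itself: it is quoted from Cruz {\em et al.}\ (their Theorem~1.3), so your attempt can only be compared against that source's argument. Your reduction to adding one non-maximal face $\sigma \in \Delta(\mathcal{C}) \smallsetminus \mathcal{C}$ at a time is correct, and it is also how the cited proof begins. The problem is the one-step construction, and the step you flag as ``the hard part'' is not a technical loose end but a false claim: in a fixed ambient space there need not exist any boundary point of $V = \bigcap_{i \in \sigma} U_i$ with a direction free of the forbidden sets. For example, take $U_1 = (0,1)$, $U_2 = (-1, 0.6)$, $U_3 = (0.5, 2)$ in $\mathbb{R}^1$, so that $\mathcal{C} = \{\mathbf{123}, 12, 13, 2, 3, \varnothing\}$ and $\sigma = 1 \in \Delta(\mathcal{C}) \smallsetminus \mathcal{C}$. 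Here $V = U_1$ has boundary $\{0,1\}$, and every cap placed beyond either endpoint meets $U_2$ or $U_3$; yet $\mathcal{C} \cup \{1\}$ is convex (realize it in $\mathbb{R}^2$). Making the realization bounded guarantees that the $\varnothing$-region is nonempty, but not that it is adjacent to $V$: the relevant atom can be completely walled in. A second, independent defect is that $U_i \mapsto \operatorname{conv}(U_i \cup D)$ is not a local operation: the added region consists of all segments from $D$ to $U_i$ that leave $U_i$, and for a convex set with curved boundary (e.g.\ the epigraph of $x \mapsto x^2$ with $D$ placed just below the vertex) this region hugs $\partial U_i$ arbitrarily far from $D$, so distant atoms can acquire the neuron $i$ and existing codewords of $\mathcal{C}$ can be destroyed.

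The missing idea, which is how Cruz {\em et al.}\ actually argue, is to go up one dimension. Given a realization in $\mathbb{R}^d$ and a point $p$ in the atom of a codeword $c \supseteq \sigma$, one replaces each $U_i$ by a cone-like set in $\mathbb{R}^{d+1}$ with apex above $p$, truncated at a lower height for $i \notin \sigma$ than for $i \in \sigma$. Near the apex only the sets with $i \in \sigma$ survive, which creates the atom for $\sigma$; convexity is automatic for cones over convex sets with a common apex, the extra dimension supplies the ``free space'' that your construction cannot find inside $\mathbb{R}^d$, and the cross-section at height $0$ reproduces the original realization so that no codeword is lost. Your outline would become a proof only after replacing the cap construction by something of this kind and then verifying atom-by-atom that exactly the codeword $\sigma$ is added.
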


The next criterion involves the inclusion of ``max-intersection'' faces, as defined below.

\begin{defn} \,
\begin{enumerate}[(a)]
    \item Let $\Delta$ be a simplicial complex.  A face $\sigma \in \Delta$ is a \emph{max-intersection face} of  $\Delta$ if $\sigma$ is nonempty and is the intersection of two or more facets of $\Delta$.
    \item Let $\C$ be a code with simplicial complex $\Delta(\C)$.
    A \emph{max-intersection face} of $\mathcal{C}$ is a max-intersection face of $\simp{\C}$. Next, $\C$ is \emph{max-intersection-complete} if it contains every max-intersection face of $\C$ (or, equivalently, if $\C$ contains every intersection of two or more codewords of $\C$).
\end{enumerate}
\end{defn}
The following result is due to Cruz {\em et al.}~\cite[Theorem~1.2]{Cruz}.

\begin{lemma}[Max-intersection-complete codes are convex]\label{Lem:max-intersection}
If $\mathcal{C}$ is a code that is max-intersection-complete, then $\mathcal{C}$ is convex.
\end{lemma}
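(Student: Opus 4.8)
The plan is to reduce, via the monotonicity result (Lemma~\ref{lem:monotonicity}), to realizing a small ``core'' code, and then to write down an explicit convex realization of that core. Let $F_1,\dots,F_m$ be the maximal codewords (facets) of $\C$, and set
\[
\C_0 \;:=\; \{\varnothing\} \cup \{F_1,\dots,F_m\} \cup \Bigl\{ \textstyle\bigcap_{k\in T} F_k \;:\; T\subseteq[m],\ |T|\ge 2\Bigr\}.
\]
Since $\C$ is max-intersection-complete, every such intersection lies in $\C$, so $\C_0 \subseteq \C \subseteq \simp{\C} = \simp{\C_0}$ (the last equality because $\C_0$ and $\C$ have the same facets). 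Hence, by Lemma~\ref{lem:monotonicity}, it suffices to prove that $\C_0$ is convex. The nonempty codewords of $\C_0$ are exactly the facets together with all their intersections, so this family is closed under intersection.

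For the construction I would work in $\R^m$, one coordinate per facet. For a nonempty codeword $c\in\C_0$ let $T_c := \{k : c\subseteq F_k\}$ be the set of facets containing it (so $T_{F_k}=\{k\}$, and $c=\bigcap_{k\in T_c}F_k$ because $\C_0$ is intersection-closed), and let $v_c := \mathbf{1}_{T_c}\in\{0,1\}^m$ be its indicator vector. For each neuron $i$ put $P_i := \operatorname{conv}\{v_c : c\in\C_0,\ i\in c\}$ and let $U_i$ be the open $\varepsilon$-neighborhood of $P_i$, which is open and convex. The combinatorial fact that makes the vertices behave is this: writing $S_i := \{k : i\in F_k\}$, if $i\notin c$ then $T_c\not\subseteq S_i$ --- indeed $T_c\subseteq S_i$ would force $i\in\bigcap_{k\in T_c}F_k=c$. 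Consequently $v_c$ has a coordinate equal to $1$ outside the coordinate subspace containing $P_i$, so $v_c$ lies at distance at least $1$ from $P_i$. Thus for $0<\varepsilon<1$ the point $v_c$ lies in $U_i$ exactly when $i\in c$, so each $v_c$ realizes precisely the codeword $c$; together with the region outside all $U_i$ realizing $\varnothing$, every codeword of $\C_0$ appears.

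The hard part is proving \emph{exactness}: that no spurious codeword appears, i.e.\ that for every $x\in\R^m$ the set $\{i : x\in U_i\}$ is again a codeword of $\C_0$. The main handle I would use is that each $P_i$ is contained in the coordinate subspace $\{z : \operatorname{supp}(z)\subseteq S_i\}$, so any $x\in\bigcap_{i\in\sigma}P_i$ satisfies $\operatorname{supp}(x)\subseteq T_\sigma := \bigcap_{i\in\sigma}S_i$; I would then argue that the realized codeword at such an $x$ is forced to be the full intersection $\bigcap_{k\in T_\sigma}F_k\in\C_0$ rather than a proper subset, using a separating-hyperplane argument to show that whenever $\sigma$ is \emph{not} of this form the corresponding intersection of the $P_i$ is empty and stays empty after fattening (for $\varepsilon$ small relative to the finitely many positive pairwise distances involved). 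Verifying this intersection pattern uniformly over all $x$, and bookkeeping the choice of $\varepsilon$ so that passing to open neighborhoods does not introduce overlaps, is the principal obstacle; once it is established, monotonicity upgrades the realization of $\C_0$ to one of $\C$, completing the proof.
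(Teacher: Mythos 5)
The paper itself does not prove this lemma: it is imported verbatim as Theorem~1.2 of Cruz \emph{et al.}~\cite{Cruz}, so there is no internal proof to compare against. Your opening reduction is fine: monotonicity (Lemma~\ref{lem:monotonicity}) does let you replace $\C$ by the core code $\C_0$ of facets and their intersections. The gap is exactly where you flagged it, and it is fatal as written: the uniform $\varepsilon$-fattening creates spurious codewords, so your construction does not realize $\C_0$. Concretely, take neurons $1,2,3,a,b,c,z$ and facets $F_1=\{1,a,b,z\}$, $F_2=\{2,a,c,z\}$, $F_3=\{3,b,c,z\}$, so that $\C_0$ consists of $\varnothing$, the facets, $\{a,z\}$, $\{b,z\}$, $\{c,z\}$, and $\{z\}$. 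Your construction gives $P_1=\{e_1\}$, $P_a=\operatorname{conv}\{e_1,e_2,e_1+e_2\}$, $P_b=\operatorname{conv}\{e_1,e_3,e_1+e_3\}$, two triangles meeting only at $e_1$. The point $x=e_1+t(e_2+e_3)$ satisfies $d(x,P_a)\le t$ and $d(x,P_b)\le t$ but $d(x,P_1)=t\sqrt{2}$, so for $\varepsilon/\sqrt{2}<t<\varepsilon$ we get $x\in U_a\cap U_b\cap U_z$ while $x\notin U_1$ (and $x\notin U_2,U_3,U_c$). Thus $x$ realizes $\{a,b,z\}$, which is not a codeword of $\C_0$ (the only codeword containing both $a$ and $b$ is $F_1$). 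Since monotonicity only transports convexity \emph{from} a subcode \emph{to} a supercode, realizing a proper supercode of $\C_0$ proves nothing about $\C_0$; taking $\C=\C_0$ shows the argument collapses.

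The sketched repair is also aimed at the wrong target. You propose a separating-hyperplane argument showing $\bigcap_{i\in\sigma}P_i=\varnothing$ whenever $\sigma$ is not an intersection of facets, but that statement is false: for any $\sigma$ contained in a codeword $c$ of $\C_0$, the vertex $v_c$ lies in $\bigcap_{i\in\sigma}P_i$. What must be controlled is the \emph{atom} $\bigl(\bigcap_{i\in\sigma}U_i\bigr)\smallsetminus\bigl(\bigcup_{j\notin\sigma}U_j\bigr)$, i.e., one must show that every point of $\bigcap_{i\in\sigma}U_i$ is absorbed by the correct additional $U_j$'s; the example above shows that a uniform $\varepsilon$-neighborhood cannot guarantee this when two of the $P_i$ meet along a low-dimensional common face, because their neighborhoods then overlap in a region that escapes the neighborhood of that face. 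Your argument does correctly show that every codeword of $\C_0$ is realized (the verification at the vertices $v_c$ is sound), but exactness fails, and fixing it requires a genuinely more delicate construction --- which is precisely the nontrivial content of the proof in~\cite{Cruz}.
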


The converse of Lemma~\ref{Lem:max-intersection} is false, as the following example shows.
\begin{ex}[Example~\ref{ex: running example L22} continued]\label{ex:max-intersection-L22}
Recall that the code 
$\C_{22} = \{ 
\mathbf{134}, \mathbf{1357}, \mathbf{257},\mathbf{356},
13, 35, 57, \varnothing\}$ 
is convex.  
We label the facets of $\Delta(\C_{22})$ by $F_1=134$, $F_2=1357$, $F_3=356$, and $F_4=257$ (the ordering of these facets is for later convenience when comparing with Figure~\ref{fig:image-simplicial-complex}).  
Among the intersections of two or more of these facets, the nonempty ones are as follows:
\begin{equation} \label{eq:facet-intersection-L22}
    F_1\cap F_2 =13,~
    \quad 
    F_1\cap F_2 \cap F_3=3,~
    \quad
F_2\cap F_3 =35,~
    \quad 
    F_2\cap F_3 \cap F_4=5,~
    \quad 
F_2\cap F_4 =57.
\end{equation}
Neither $3$ nor $5$ is a codeword of $\C_{22}$, so $\C_{22}$ is not max-intersection-complete.
\end{ex}

\subsection{Criteria for precluding convexity} \label{sec:no-convex}
In this subsection, we recall two results that can be used to show that a code is not convex (Lemmas~\ref{lem:sprockets-nonconvex} and~\ref{lem: convex-no-obs}).

\subsubsection{Wheels and sprockets} \label{sec-wheels}
In this subsection, we recall the definitions of a wheel and a sprocket~\cite{wheels}.
Sprockets generate wheels, and the presence of a wheel is an obstruction to convexity (Lemma~\ref{lem:sprockets-nonconvex} below). 

\begin{defn}[Wheel] \label{def:wheel}
    Let $\C$ be a code on $n$ neurons, and let $\mathcal{U} = \{U_i\}_{i=1}^n$ be a realization of~$\C$. A tuple $\mathcal{W} = (\sigma_1, \sigma_2, \sigma_3, \tau) \in (2^{[n]})^4$ is a \textit{wheel of the realization} $\mathcal{U}$ if it satisfies:
        \begin{enumerate}[start=1,label={\bfseries{W(\roman*)}:}]   
        \item $U_{\sigma_i} \cap U_{\sigma_j} = U_{\sigma_1} \cap U_{\sigma_2} \cap U_{\sigma_3} \neq \varnothing$ for all $1 \leq j < k \leq 3$,
        
        \item $U_{\sigma_1} \cap U_{\sigma_2} \cap U_{\sigma_3} \cap U_\tau = \varnothing$,
        
        \item if $U_\tau$ and $U_{\sigma_j} \cap U_\tau$ are convex for $j=1,2,3,$ then there exists a line segment such that one of the endpoints lies in $U_{\sigma_1} \cap U_\tau$ and the other in $U_{\sigma_3} \cap U_\tau$, that also meets $U_{\sigma_2} \cap U_\tau$.  
    \end{enumerate}
    Finally, $\mathcal{W}$ is a \textit{wheel of} $\C$ if it is a wheel of every realization $\mathcal{U}$ of $\C$.
\end{defn}

Wheels are defined relative to realizations of a code $\C$, but there are ``combinatorial wheels'' -- combinatorial conditions that can be read from $\C$ and $\Delta(\C)$ -- that guarantee the presence of a wheel.  One such combinatorial wheel is a sprocket (Definition~\ref{def:wheel-sprocket} below). To define sprockets, we need Jeffs' definition of a ``trunk''~\cite{morphisms}.

\begin{defn}[Trunk]
Let $\mathcal{C}$ be a code on $n$ neurons, and let $\sigma \subseteq [n]$. The \textit{trunk} of $\sigma$ in $\mathcal{C}$ is \[\text{Tk}_{\mathcal{C}}(\sigma) ~:=~ \{c \in \mathcal{C} : \sigma \subseteq c\}~.\]
\end{defn}

\begin{defn}[Partial-wheel and sprocket] \label{def:wheel-sprocket}
Let $\C$ be a neural code.
    A tuple $\mcW = (\sigma_1, \sigma_2, \sigma_3, \tau) \in (\Delta(\mcC))^4$ is a \emph{partial-wheel} of $\mcC$ if it satisfies the following conditions:
    \begin{enumerate}[start=1,label={\bfseries{P(\roman*)}:}]   
        \item $\sigma_1 \cup \sigma_2 \cup \sigma_3 \in \Delta(\mcC)$, and $\Tk_{\mcC}(\sigma_j \cup \sigma_k)= \Tk_{\mcC}(\sigma_1 \cup \sigma_2 \cup \sigma_3)$ for every $1 \leq j < k \leq 3$,
        
        \item $\sigma_1 \cup \sigma_2 \cup \sigma_3 \cup \tau \not\in \Delta(\mcC)$, and 
        
        \item[\textbf{P(iii)$_\circ$:}] $\sigma_j \cup \tau \in \Delta(\mcC)$ for $j \in \{1,2,3\}$. 
    \end{enumerate}
     A \textit{sprocket} of a $\mcC$ is a partial-wheel $\mcW = (\sigma_1, \sigma_2, \sigma_3, \tau)$ of $\mcC$ for which 
     there exist $\rho_1$, $\rho_3 \in \Delta(\mcC)$ (these sets $\rho_1$ and $\rho_3$ are called \textit{witnesses} for the spocket) such that the following conditions hold:
        \begin{enumerate}[start=1,label={\bfseries{S}(\arabic*):}, itemindent=5pt]
            \item $\Tk_{\mcC}(\sigma_j \cup \tau)\subseteq \Tk_{\mcC}(\rho_j)$ for $j \in \{1,3\}$,
            
            \item $\Tk_{\mcC}(\tau) \subseteq \Tk_{\mcC}(\rho_1) \cup \Tk_{\mcC}(\rho_3)$, and 
            
            \item $\Tk_{\mcC}(\rho_1 \cup \rho_3 \cup \tau) \subseteq \Tk_{\mcC}(\sigma_2)$.
            
        \end{enumerate}           
\end{defn}

\begin{remark}\label{rem:notation-sprocket}
Our notation {\bf P(i)--P(iii)$_\circ$} matches that in~\cite{wheels}, whereas for simplicity we use {\bf S(1)--S(3)} rather than the notation {\bf S(iii)(1)--S(iii)(3)} used in~\cite{wheels}.
\end{remark}

The following result is due to Ruys de Perez {\em et al.}~\cite[Theorem~3.4 and Proposition~4.5]{wheels}.

\begin{lemma}[Sprockets are obstructions to convexity] \label{lem:sprockets-nonconvex}
    Let $\C$ be a code.
    \begin{enumerate}
        \item If $\C$ has a sprocket, then $\C$ has a wheel.
        \item If $\C$ has a wheel, then $\C$ is non-convex.
    \end{enumerate}
\end{lemma}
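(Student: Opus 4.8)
Since the statement is attributed to prior work, the plan is to indicate how each part follows from the definitions just given, treating the purely geometric core of part~(2) as foundational input. Throughout I would use the dictionary between faces and regions of a realization $\mathcal{U} = \{U_i\}$: writing $U_\sigma := \bigcap_{i \in \sigma} U_i$, a point lies in $U_\sigma$ exactly when its codeword contains $\sigma$, so $U_\sigma = \bigcup_{c \in \Tk_{\mcC}(\sigma)} R_c$, where $R_c$ is the region of codeword $c$. Three consequences are immediate and drive everything: $U_{\sigma \cup \sigma'} = U_\sigma \cap U_{\sigma'}$; a trunk containment $\Tk_{\mcC}(\sigma) \subseteq \Tk_{\mcC}(\sigma')$ forces $U_\sigma \subseteq U_{\sigma'}$ (and equal trunks force equal regions); and $\sigma \in \Delta(\mcC)$ if and only if $U_\sigma \neq \varnothing$.

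For part~(1), I would fix an arbitrary realization $\mathcal{U}$ of a code $\mcC$ carrying a sprocket $\mcW = (\sigma_1,\sigma_2,\sigma_3,\tau)$ with witnesses $\rho_1,\rho_3$, and verify directly that $\mcW$ is a wheel of $\mathcal{U}$. Conditions W(i) and W(ii) are combinatorial and fall out of the dictionary: P(i) says $\Tk_{\mcC}(\sigma_j \cup \sigma_k) = \Tk_{\mcC}(\sigma_1 \cup \sigma_2 \cup \sigma_3)$ for each pair, which translates to $U_{\sigma_j} \cap U_{\sigma_k} = U_{\sigma_1} \cap U_{\sigma_2} \cap U_{\sigma_3}$, nonempty because $\sigma_1 \cup \sigma_2 \cup \sigma_3 \in \Delta(\mcC)$; and P(ii) says $\sigma_1 \cup \sigma_2 \cup \sigma_3 \cup \tau \notin \Delta(\mcC)$, that is, $U_{\sigma_1} \cap U_{\sigma_2} \cap U_{\sigma_3} \cap U_\tau = \varnothing$.

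The substantive step is W(iii), and this is where the witnesses enter. Assuming its hypothesis (so in particular $U_\tau$ is convex), I would pick $x \in U_{\sigma_1} \cap U_\tau$ and $y \in U_{\sigma_3} \cap U_\tau$, both nonempty by P(iii)${}_\circ$, so that $[x,y] \subseteq U_\tau$ by convexity. Translating the witness conditions, S(1) gives $U_{\sigma_1} \cap U_\tau \subseteq U_{\rho_1}$ and $U_{\sigma_3} \cap U_\tau \subseteq U_{\rho_3}$ (hence $x \in U_{\rho_1}$, $y \in U_{\rho_3}$); S(2) gives $U_\tau \subseteq U_{\rho_1} \cup U_{\rho_3}$; and S(3) gives $U_{\rho_1} \cap U_{\rho_3} \cap U_\tau \subseteq U_{\sigma_2}$. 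Now $U_{\rho_1} \cap [x,y]$ and $U_{\rho_3} \cap [x,y]$ are relatively open in the segment (the $U_{\rho_i}$ are open), they cover $[x,y]$ by S(2), and they contain $x$ and $y$ respectively; since $[x,y]$ is connected they cannot be disjoint, so some $z \in U_{\rho_1} \cap U_{\rho_3} \cap [x,y]$ exists. Then $z \in U_{\rho_1} \cap U_{\rho_3} \cap U_\tau \subseteq U_{\sigma_2}$ by S(3) and $z \in U_\tau$, so $[x,y]$ meets $U_{\sigma_2} \cap U_\tau$, which is exactly W(iii). When the convexity hypothesis of W(iii) fails the condition is vacuous, so $\mcW$ is a wheel of every realization, hence a wheel of $\mcC$. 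The main obstacle here is bookkeeping: checking that each trunk relation in P and S converts to the intended inclusion of regions; the connectedness argument itself is short, and notably needs only that $U_\tau$ is convex and the $U_{\rho_i}$ are open.

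Part~(2) is the geometric heart and is the foundational wheel theorem of the cited work, which I would take as given. Its proof proceeds by contradiction: from a convex realization of $\mcC$ every $U_\sigma$ is convex, so the hypothesis of W(iii) is automatically met and a wheel supplies a transversal segment with endpoints in $U_{\sigma_1} \cap U_\tau$ and $U_{\sigma_3} \cap U_\tau$ meeting $U_{\sigma_2} \cap U_\tau$; one then derives a contradiction from the convexity of the $U_{\sigma_i}$ together with the ``sunflower'' intersection pattern forced by W(i) and W(ii). I expect the genuinely hard part to be exactly this planar convex-geometry argument — showing that the common intersection $U_{\sigma_1} \cap U_{\sigma_2} \cap U_{\sigma_3}$, which lies outside $U_\tau$ by W(ii), is incompatible with a straight transversal threading the three pairwise-disjoint convex pieces $U_{\sigma_j} \cap U_\tau$ in order — and I would invoke the cited result rather than reprove it. Combining the two parts, part~(1) then shows that sprockets preclude convexity as well.
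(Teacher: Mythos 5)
The paper offers no proof of this lemma; it is imported wholesale from Ruys de Perez \emph{et al.} (cited as Theorem~3.4 and Proposition~4.5 of that work), so your proposal in fact does more than the paper: you cite part~(2) exactly as the paper does, but you reconstruct part~(1) from Definitions~\ref{def:wheel} and~\ref{def:wheel-sprocket}. Your reconstruction is the standard argument (essentially the sprocket case of the cited Proposition~4.5), and the dictionary you set up --- trunk containments become containments of the sets $U_\sigma$, membership in $\Delta(\mathcal{C})$ becomes nonemptiness of $U_\sigma$ --- correctly converts P(i), P(ii) into W(i), W(ii) and S(1)--S(3) into the inclusions needed for the connectedness step, which then lands a point of $[x,y]$ in $U_{\rho_1}\cap U_{\rho_3}\cap U_\tau\subseteq U_{\sigma_2}\cap U_\tau$, as W(iii) demands. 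The one point to tighten: your intermediate-value step uses that $U_{\rho_1}$ and $U_{\rho_3}$ are open, whereas Definition~\ref{def:wheel} quantifies over \emph{every} realization and Definition~\ref{def:realization} permits arbitrary subsets of $\mathbb{R}^d$, for which the two traces on $[x,y]$ could partition the segment without meeting even when the convexity hypothesis of W(iii) holds. The identical argument goes through when all the $U_i$ are closed (two nonempty closed sets covering a connected segment must intersect), which matches the actual scope of the cited result (an obstruction to both open and closed convexity) and is all that part~(2) requires in this paper, where only open realizations are in play (Remark~\ref{rem:open-vs-closed}); so this is a mismatch with the letter of the definition rather than a substantive gap.
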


\begin{ex}[A code with a sprocket] \label{ex: sprocket-L24}
Consider the following $4$-maximal code: $$\C_{24} ~:=~ \{\mathbf{123},\mathbf{1246},\mathbf{145}, \mathbf{356}, 12, 14, 3 ,5 ,6, \varnothing\}~.$$ 
We claim that $\mcW = (3,6,5,1)$, with witnesses $\rho_1 = 12$ and $\rho_3 = 14$, is a sprocket of $\C_{24}$, and therefore (by Lemma~\ref{lem:sprockets-nonconvex}) $\C_{24}$ is non-convex. We briefly outline the proof of this claim here:
\begin{itemize}
    \item \textbf{P(i)}: $356\in \C_{24}$ implies that $356\in \Delta(\C_{24})$, and $\Tk_{\C_{24}}(35) = \Tk_{\C_{24}}(36) = \Tk_{\C_{24}}(56) =\{356\} = \Tk_{\C_{24}}(356)$.
    \item \textbf{P(ii)}: None of the codewords in ${\C_{24}}$ contain $1356$, so $1356\not\in \Delta(\C_{24})$.
    \item \textbf{P(iii)}$_\circ$: $13 \subseteq 123 \in \C_{24}$, $15\subseteq 145\in \C_{24}$, and $16\subseteq 1246\in \C_{24}$; hence, $13, 15,16\in \Delta(\C_{24})$. 
    \item \textbf{S(1)}: $\Tk_{\C_{24}}(13) = \{123\} \subseteq \{12,123\} = \Tk_{\C_{24}}(12)$, and $\Tk_{\C_{24}}(15) = \{145\} \subseteq \{14,145\} = \Tk_{\C_{24}}(14)$.
    \item \textbf{S(2)}: $\Tk_{\C_{24}}(1) = \{123, 1246, 145, 12, 14\} = \{123, 1246, 12\} \cup \{1246, 145, 14\} = \Tk_{\C_{24}}(12) \cup \Tk_{\C_{24}}(14)$.
    \item \textbf{S(3)}: $\Tk_{\C_{24}}(124) = \{1246\} \subseteq \{1246,356,6\} = \Tk_{\C_{24}}(6)$.
\end{itemize}
\end{ex}

\begin{remark} \label{rem:converse-sprocket}
The converse of 
    Lemma~\ref{lem:sprockets-nonconvex}(1) is false~\cite[Example~4.7]{wheels}.
\end{remark}

\subsubsection{Local obstructions to convexity} \label{sec:local-obstruction}

This subsection recalls ``local obstructions'' to convexity (Definition~\ref{def:local-obs}).  

\begin{defn}[Link] \label{def:link}
Let $\Delta$ be a simplicial complex on $[n]$, and let $\sigma \in \Delta$. The \emph{link of} $\sigma$ \emph{in} $\Delta$ is the following simplicial complex:
\begin{equation*}
    \link{\Delta}{\sigma} ~\coloneqq ~\{\tau \subseteq [n]: \tau \cap \sigma = \varnothing \text{ and } \tau \cup \sigma \in \Delta\}~.
\end{equation*}
\end{defn}

The following definition of local obstructions is equivalent to the usual one~\cite[Proposition~4.8]{Curto}.

\begin{defn}[Local obstruction] \label{def:local-obs}
    ~
    \begin{enumerate}
    \item Let $\Delta$ be a simplicial complex. A face $\sigma \in \Delta$ is \textit{mandatory} if $\sigma \neq \varnothing$ and the link $\mathrm{Lk}_{\Delta}(\sigma)$ is not contractible.
    \item Let $\C$ be a code with simplicial complex $\Delta(\C)$. A {\em mandatory face} of $\C$ is a mandatory face of $\Delta(\C)$.  Next, $\C$ has a {\em local obstruction} if there exists a mandatory face $\sigma$ of $\C$ such that $\sigma \notin \C$.
    \end{enumerate}
\end{defn}

It follows from definitions that a code $\C$ has no local obstructions if and only if  $\mathcal{C}_{\text{min}}(\Delta(\mathcal{C})) \subseteq \C$, where the \textit{minimal code} $\mathcal{C}_{\text{min}}(\Delta)$ of a 
simplicial complex $\Delta$ is given by
    \begin{align} \label{eq:c-min} 
    \notag
    \mathcal{C_{\text{min}}}(\Delta) ~&:=~  \{ \sigma \in \Delta : \mathrm{Lk}_{\Delta}(\sigma) \mathrm{~is~not~contractible} \} \cup \{ \varnothing\} 
    \\
    ~&=~
         \{\text{facets of $\Delta$} \} \cup 
    \{\text{non-facet mandatory faces of $\Delta$}\}
    \cup \{\varnothing\} ~.
    \end{align}

The following result is due to Giusti and Itskov~\cite[Theorem~3]{Giusti} (see also~\cite[Lemma~1.2]{Curto}).

\begin{lemma}[Local obstruction to convexity]
\label{lem: convex-no-obs} 
    If $\C$ is a code with a local obstruction, then $\C$ is non-convex.
\end{lemma}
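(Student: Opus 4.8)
The plan is to argue by contradiction, invoking the Nerve Lemma for good covers by convex open sets. Suppose $\C$ were convex, so that there is a realization $\mathcal{U} = \{U_i\}_{i=1}^n$ by convex open sets with $\C = \C(\mathcal{U})$. Since $\C$ has a local obstruction, Definition~\ref{def:local-obs} furnishes a nonempty face $\sigma \in \Delta(\C)$ that is mandatory (so $\link{\Delta(\C)}{\sigma}$ is not contractible) and yet satisfies $\sigma \notin \C$. I would derive a contradiction by showing that this link must in fact be contractible.

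Write $U_\sigma := \bigcap_{i \in \sigma} U_i$. This is convex open, and it is nonempty: since $\sigma \in \Delta(\C)$ we have $\sigma \subseteq c$ for some $c \in \C$, and any point of the (nonempty) region of $c$ lies in every $U_i$ with $i \in c \supseteq \sigma$. Hence $U_\sigma$ is contractible. The first key step is to translate $\sigma \notin \C$ into a covering statement: from the definition of $\C(\mathcal{U})$, the condition $\sigma \notin \C$ says that $\big(\bigcap_{i\in\sigma}U_i\big)\smallsetminus\big(\bigcup_{j\notin\sigma}U_j\big)$ is empty, which is precisely the assertion $U_\sigma \subseteq \bigcup_{j \notin \sigma} U_j$. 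Consequently the convex open sets $V_j := U_\sigma \cap U_j$, for $j \in [n]\smallsetminus\sigma$, form an open cover of $U_\sigma$.

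The second key step is to identify the nerve of $\{V_j\}$ with $\link{\Delta(\C)}{\sigma}$. For $\tau \subseteq [n]\smallsetminus\sigma$ one has $\bigcap_{j\in\tau} V_j = \bigcap_{i \in \sigma \cup \tau} U_i$, which is nonempty exactly when $\sigma \cup \tau \in \Delta(\C)$, i.e.\ exactly when $\tau \in \link{\Delta(\C)}{\sigma}$. Thus the nerve of $\{V_j\}$ is $\link{\Delta(\C)}{\sigma}$. Because every finite intersection of the $V_j$ is convex open, hence either empty or contractible, the cover $\{V_j\}$ is a good cover, and the Nerve Lemma yields a homotopy equivalence between its nerve $\link{\Delta(\C)}{\sigma}$ and the union $U_\sigma$. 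Since $U_\sigma$ is contractible, so is $\link{\Delta(\C)}{\sigma}$, contradicting the mandatoriness of $\sigma$. Therefore $\C$ is not convex.

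I expect the main obstacle to be the careful invocation of the Nerve Lemma in the open setting: one must verify its hypotheses for the cover $\{V_j\}$ of $U_\sigma$—in particular paracompactness of $U_\sigma$ together with the fact that every nonempty finite intersection is contractible—so that the nerve genuinely records the homotopy type of the union. Everything else (nonemptiness of $U_\sigma$, the covering reformulation of $\sigma \notin \C$, and the nerve-versus-link identification) is bookkeeping directly from the definitions. As an alternative to the homotopy-theoretic machinery, one could instead cite the equivalence noted after Definition~\ref{def:local-obs} between this formulation of local obstructions and the minimal-code formulation and reduce to the standard statement; but the cleanest self-contained route is the nerve argument above.
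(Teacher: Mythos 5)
Your argument is correct, but note that the paper does not prove this lemma at all: it is quoted as a known result of Giusti and Itskov (\cite[Theorem~3]{Giusti}, see also \cite[Lemma~1.2]{Curto}). What you have written is essentially the standard proof from those references, adapted to the link-based formulation of local obstructions in Definition~\ref{def:local-obs}. The key steps all check out: $U_\sigma=\bigcap_{i\in\sigma}U_i$ is nonempty (since $\sigma\in\Delta(\C)$ forces $\sigma\subseteq c$ for some $c\in\C$, whose atom is nonempty), convex, and open; the condition $\sigma\notin\C(\mathcal{U})$ is exactly the covering statement $U_\sigma\subseteq\bigcup_{j\notin\sigma}U_j$; the identification $\bigcap_{j\in\tau}V_j=\bigcap_{i\in\sigma\cup\tau}U_i$, nonempty iff $\sigma\cup\tau\in\Delta(\C)$, shows the nerve of $\{V_j\}_{j\in[n]\smallsetminus\sigma}$ is $\link{\Delta(\C)}{\sigma}$; and the Nerve Lemma applies since $U_\sigma$ is an open (hence paracompact) subset of $\R^d$ and all finite intersections of the $V_j$ are convex, hence empty or contractible. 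The resulting contractibility of the link contradicts mandatoriness. Two minor points you could make explicit: a mandatory face $\sigma\notin\C$ is never a facet of $\Delta(\C)$ (facets are maximal codewords, which lie in $\C$), so the index set $[n]\smallsetminus\sigma$ is genuinely relevant; and the nonemptiness criterion $\bigcap_{i\in\rho}U_i\neq\varnothing\Leftrightarrow\rho\in\Delta(\C)$ deserves the one-line justification you gave for $U_\sigma$ applied to general $\rho$. Your fallback suggestion of reducing to the minimal-code formulation would be circular here, since the equivalence cited after Definition~\ref{def:local-obs} is between two phrasings of the same combinatorial condition, not a proof of non-convexity; the nerve argument is the right self-contained route.
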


Next, we recall that mandatory faces are necessarily intersections of facets~\cite[Lemma~1.4]{Curto}.

\begin{lemma} \label{lem:intersection-facet}
    If $\sigma$ is a mandatory face of a simplicial complex $\Delta$, then $\sigma$ either is a facet of $\Delta$ or is the intersection of two or more facets of $\Delta$. 
\end{lemma}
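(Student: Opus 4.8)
The plan is to prove the stated dichotomy directly, with the key tool being that a suitable vertex becomes a cone apex of the link $\lk_{\Delta}(\sigma)$. Since $\sigma$ is a face of the finite complex $\Delta$, it is contained in at least one facet; let $I$ denote the intersection of all facets of $\Delta$ that contain $\sigma$, so that $\sigma \subseteq I$. The whole argument then splits according to whether this containment is an equality.

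If $\sigma = I$, I am essentially done: if only one facet $F$ contains $\sigma$, then $I = F$ forces $\sigma = F$, so $\sigma$ is a facet; and if two or more facets contain $\sigma$, then $\sigma = I$ is exactly their intersection, giving the second alternative. So the substance of the proof is to rule out the strict case $\sigma \subsetneq I$ whenever $\sigma$ is mandatory.

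Suppose then that $\sigma \subsetneq I$, and pick a vertex $v \in I \setminus \sigma$; by construction $v$ lies in every facet containing $\sigma$. I claim $v$ is a cone apex of $\lk_{\Delta}(\sigma)$, meaning $\tau \cup \{v\} \in \lk_{\Delta}(\sigma)$ for every $\tau \in \lk_{\Delta}(\sigma)$ (and that $\{v\}$ itself lies in the link). Given such a $\tau$, disjointness from $\sigma$ is immediate, since $\tau \cap \sigma = \varnothing$ and $v \notin \sigma$; for the face condition, note that $\tau \cup \sigma \in \Delta$ sits inside some facet $F$, that this $F$ contains $\sigma$, hence contains $v$, and therefore $\tau \cup \sigma \cup \{v\} \subseteq F$ is a face. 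The same computation with $\tau = \varnothing$ shows $\{v\} \in \lk_{\Delta}(\sigma)$. Thus $\lk_{\Delta}(\sigma)$ is a simplicial cone and hence contractible, contradicting the assumption that $\sigma$ is mandatory. So the strict case cannot occur, and $\sigma$ must be a facet or an intersection of two or more facets.

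The only delicate point is the cone/contractibility step: one must verify both that $v$ is genuinely a vertex of the link and that adjoining $v$ to an arbitrary face of the link keeps it in the link. This is precisely where the defining property of $I$ (that $v$ belongs to \emph{every} facet over $\sigma$) does all the work, so that $v$ can be freely added to any face $\tau \cup \sigma$ without leaving $\Delta$. Everything else is routine bookkeeping with the definition of the link.
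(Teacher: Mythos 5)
Your proposal is correct. The paper does not prove this lemma itself---it is quoted from \cite[Lemma~1.4]{Curto}---and your argument (if $\sigma$ is strictly contained in the intersection $I$ of all facets containing it, any vertex $v \in I \smallsetminus \sigma$ is a cone apex of $\mathrm{Lk}_{\Delta}(\sigma)$, making the link contractible and contradicting mandatoriness) is precisely the standard proof of that cited result, with the two remaining cases $\sigma = I$ handled exactly as one should.
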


\begin{ex}[Example~\ref{ex: sprocket-L24} continued]\label{ex:local-obstruction-L24}
We use Lemma~\ref{lem:intersection-facet} to investigate the mandatory faces of the code
$\C_{24} = \{\mathbf{123},\mathbf{1246},\mathbf{145}, \mathbf{356}, 12, 14, 3 ,5 ,6, \varnothing\}$.
We consider intersections of two or more of the facets $F_1=123$, $F_2=1246$, $F_3=145$, and $F_4=356$ of $\Delta(\C_{24})$; the nonempty such intersections are as follows:
\begin{equation} \label{eq:facet-intersections-L24}
        F_1 \cap F_2 \cap F_3 = F_1 \cap F_3 = 1,~
    F_1 \cap F_2 =12,~
    F_2 \cap F_3 =14,~
    F_1 \cap F_4 =3,~
    F_2 \cap F_4 =6,~
    F_3 \cap F_4 =5.    
\end{equation}
All of these intersections are codewords of $\C_{24}$, except $\sigma=1$.  The corresponding link, $\mathrm{Lk}_{\Delta(\C_{24})}(1)$, is shown below and is readily seen to be contractible:

\begin{center}
\begin{tikzpicture}[scale=.8]
	\draw (0,0) --(1,0);
	\draw (2,0) --(3,0);
    \draw [fill=gray, thick] (1,0) --(2,0)--(1.5,.8) -- (1,0);
    \draw [fill] (0,0) circle [radius=0.08];
    \draw [fill] (1,0) circle [radius=0.08];
    \draw [fill] (2,0) circle [radius=0.08];
    \draw [fill] (3,0) circle [radius=0.08];
    \draw [fill] (1.5,.8) circle [radius=0.08];
    \node [below] at (0,0) {$3$};
    \node [below] at (1,0) {$2$};
    \node [below] at (2,0) {$6$};
    \node [above] at (1.5,.8) {$4$};
    \node [below] at (3,0) {$5$};
	\end{tikzpicture}
\end{center}
We conclude that $\C_{24}$ has no local obstructions.  Next, it is straightforward to check that the links of all remaining $5$ intersections shown in~\eqref{eq:facet-intersections-L24} -- that is, $\mathrm{Lk}_{\Delta(\C_{24})}(\sigma)$ for $\sigma  \in \{12, 14, 3, 6, 5\}$ -- are non-contractible (in fact, disconnected).  Therefore, these $5$ faces $\sigma$ are mandatory, and thus $\C_{24}$ is in fact the minimal code of $\Delta(\C_{24})$ (that is, $\mincode(\Delta(\C_{24}))=\C_{24}$).
\end{ex}
Example~\ref{ex:local-obstruction-L24} shows that the converse of Lemma~\ref{lem: convex-no-obs} does not hold.

Next, we recall what is known about the converse of Lemma~\ref{lem:intersection-facet} in the cases of the intersections of two or three facets: The nonempty intersection of two (and not more) facets is always mandatory (Lemma~\ref{lem:intersection-2-facets} below),
while for three facets, being mandatory is characterized by violating a ``Path-of-Facets Condition'' (Definition~\ref{def:path-facets} and Lemma~\ref{lem:triple}(a) below).  The following result is~\cite[Lemma~4.7]{Curto}.

\begin{lemma}[Intersection of $2$ facets]\label{lem:intersection-2-facets}
Let $\Delta$ be a simplicial complex. If $\sigma = F_1 \cap F_2$, for distinct facets $F_1$ and $F_2$ of $\Delta$, and $\sigma$ is not contained in any other facet of $\Delta$, then $\mathrm{Lk}_{\Delta}(\sigma)$ is not contractible. 
\end{lemma}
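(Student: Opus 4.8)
The plan is to compute the link $\link{\Delta}{\sigma}$ explicitly and show it is disconnected, which immediately precludes contractibility (a contractible space is path-connected). Write $A := F_1 \smallsetminus \sigma$ and $B := F_2 \smallsetminus \sigma$. First I would argue that the facets of $\link{\Delta}{\sigma}$ are exactly $A$ and $B$. Indeed, if $\tau \in \link{\Delta}{\sigma}$, then $\tau \cap \sigma = \varnothing$ and $\tau \cup \sigma \in \Delta$, so $\tau \cup \sigma$ is contained in some facet $F$; since $\sigma \subseteq F$ and, by hypothesis, $F_1$ and $F_2$ are the only facets containing $\sigma$, we get $F \in \{F_1, F_2\}$, whence $\tau \subseteq A$ or $\tau \subseteq B$. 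Conversely, every subset of $A$ (resp.\ of $B$) lies in the link because $A \cup \sigma = F_1 \in \Delta$ (resp.\ $B \cup \sigma = F_2 \in \Delta$). Thus $\link{\Delta}{\sigma}$ is the simplicial complex generated by the two faces $A$ and $B$.

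Next I would establish the two structural facts about $A$ and $B$. They are nonempty: if $A = \varnothing$ then $F_1 = \sigma \subseteq F_2$, contradicting the maximality of the facet $F_1$ (and symmetrically for $B$). They are disjoint: since $\sigma = F_1 \cap F_2$, we have $A \cap B = (F_1 \cap F_2) \smallsetminus \sigma = \varnothing$. Consequently the geometric realizations of the simplices on $A$ and on $B$ share no vertices and are therefore disjoint, so $\link{\Delta}{\sigma}$ is the disjoint union of two nonempty (hence each contractible) simplices.

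Finally, a disjoint union of two nonempty simplices has (at least) two connected components and so is not path-connected; since any contractible space is path-connected, $\link{\Delta}{\sigma}$ is not contractible, as desired.

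The only place requiring care --- the main obstacle, such as it is --- is the identification of the facets of the link: this is precisely where the hypothesis that $\sigma$ is contained in no facet other than $F_1$ and $F_2$ is used, and it must be invoked to rule out additional faces of the link that could glue $A$ and $B$ into a single connected (and possibly contractible) complex. Once the link is pinned down as two disjoint simplices, the topological conclusion is immediate.
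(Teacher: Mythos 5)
Your proof is correct. Note, however, that the paper does not prove this lemma at all: it is imported verbatim from Curto \emph{et al.}\ (cited there as Lemma~4.7), so there is no in-paper argument to compare yours against. Your direct computation is sound: the hypothesis that $F_1$ and $F_2$ are the only facets containing $\sigma$ is used exactly where you say it is, to show every face of $\mathrm{Lk}_{\Delta}(\sigma)$ lies in $A = F_1 \smallsetminus \sigma$ or $B = F_2 \smallsetminus \sigma$; the nonemptiness of $A$ and $B$ follows from maximality of the facets, their disjointness from $\sigma = F_1 \cap F_2$, and a disjoint union of two nonempty simplices is disconnected, hence not contractible. For what it is worth, your argument is essentially the one that Lemma~\ref{nerve} packages: the set $\mathcal{L}_{\Delta}(\sigma)$ of facets of the link is $\{A, B\}$ with $A \cap B = \varnothing$, so its nerve is two isolated vertices, which is disconnected; your version just avoids invoking the Nerve Lemma by observing the disconnection directly in the link itself.
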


\begin{defn} \label{def:path-facets}
Let $\Delta$ be a simplicial complex with exactly three facets $F_1$, $F_2$, and $F_3$. Then $\Delta$ satisfies the \emph{Path-of-Facets Condition} if exactly one of the following three sets is empty: 
\begin{itemize}
    \item $(F_1 \cap F_2) \smallsetminus F_3$
    \item $(F_1 \cap F_3) \smallsetminus F_2$
    \item $(F_2 \cap F_3) \smallsetminus F_1$.
\end{itemize}  
\end{defn}

The following result is~\cite[Lemmas~3.1 and~3.3]{Shiu}, with additional details coming from~\cite[proof of Lemma~3.3]{Shiu}.

\begin{lemma}[Simplicial complexes with $3$ facets]\label{lem:triple}\label{lem:path}
Let $\Delta$ be a simplicial complex with exactly three facets $F_1$, $F_2$, and $F_3$.
\begin{enumerate}[(a)]
    \item   The link $\link{\Delta}{F_1 \cap F_2 \cap F_3}$ is contractible if and only if $\Delta$ satisfies the Path-of-Facets Condition.
    \item  If $\Delta$ satisfies the Path-of-Facets Condition, then the minimal code $\mathcal{C}_{\text{min}}(\Delta)$ is convex.  To give more details, if 
    $(a,b,c)$ is a permutation of $(1,2,3)$  
     such that $(F_a \cap F_b) \smallsetminus F_c$ and $(F_b \cap F_c)\smallsetminus F_a$ are nonempty, while $(F_a \cap F_c) \smallsetminus F_b$ is empty; then $\mathcal{C}_{\text{min}}(\Delta) = \{ \mathbf{F_a}, \mathbf{F_b}, \mathbf{F_c}, F_a \cap F_b, F_b \cap F_c\}$, and a convex realization of $\mathcal{C}_{\text{min}}(\Delta)$ in $\mathbb{R}$ is shown in Figure~\ref{fig:realization-1-dim}.
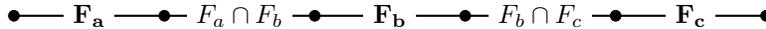
\begin{figure}[ht]
    \centering        
     \begin{tikzpicture}
        \filldraw[black] (2,0) circle (2pt);
        \filldraw[black] (0,0) circle (2pt);
        \filldraw[black] (-2,0) circle (2pt);
        \filldraw[black] (-4,0) circle (2pt);
        \filldraw[black] (-6,0) circle (2pt);
        \filldraw[black] (4,0) circle (2pt);

        \draw[thick, black] (-6,0)--(-4,0) node [midway, fill=white] {$\mathbf{F_a}$};
        \draw[thick, black] (-4,0)--(-2,0) node [midway, fill=white] {$F_a \cap F_b$};
        \draw[thick, black] (-2,0)--(0,0) node [midway, fill=white] {$\mathbf{F_b}$};
        \draw[thick, black] (0,0)--(2,0) node [midway, fill=white] {$F_b \cap F_c$};
        \draw[thick, black] (2,0)--(4,0) node [midway, fill=white] {$\mathbf{F_c}$};
    \end{tikzpicture}
\caption{A convex realization of 
$\mathcal{C}_{\text{min}}(\Delta)$, where $\Delta$ is a simplicial complex with exactly three facets and satisfies the Path-of-Facets Condition.}
\label{fig:realization-1-dim}
\end{figure}%
\end{enumerate}
\end{lemma}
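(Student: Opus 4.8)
The plan is to treat the two parts in turn, using part~(a) inside part~(b). Write $\sigma := F_1\cap F_2 \cap F_3$ and $G_i := F_i \smallsetminus \sigma$ for $i\in\{1,2,3\}$, and let $\simp{S}$ denote the full simplex $2^S$ on vertex set $S$. For part~(a) I would first identify the link: a set $\tau$ with $\tau\cap\sigma=\varnothing$ lies in $\link{\Delta}{\sigma}$ iff $\tau\cup\sigma\subseteq F_i$ for some $i$, i.e.\ iff $\tau\subseteq G_i$ for some $i$, so $\link{\Delta}{\sigma}=\simp{G_1}\cup\simp{G_2}\cup\simp{G_3}$. The bookkeeping identity $G_i\cap G_j=(F_i\cap F_j)\smallsetminus F_k$ (using $\sigma=(F_i\cap F_j)\cap F_k$) shows that the three sets in Definition~\ref{def:path-facets} are exactly the pairwise intersections of the $G_i$, and that $G_1\cap G_2\cap G_3=\varnothing$. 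Each $G_i$ is nonempty (otherwise $F_i\subseteq F_j$, contradicting facet maximality), so each $\simp{G_i}$ is contractible. I would then split on the number of empty pairwise intersections. If exactly one is empty (the Path-of-Facets case), say $G_a\cap G_c=\varnothing$, then $\simp{G_a}$ and $\simp{G_c}$ attach to the contractible $\simp{G_b}$ along the nonempty, hence contractible, simplices $\simp{G_a\cap G_b}$ and $\simp{G_b\cap G_c}$, so the link is contractible. If none is empty, Mayer--Vietoris with $U=\simp{G_a}\cup\simp{G_b}$ and $V=\simp{G_c}$ (both contractible, with $U\cap V=\simp{G_a\cap G_c}\cup\simp{G_b\cap G_c}$ a disjoint union of two nonempty simplices since $G_a\cap G_b\cap G_c=\varnothing$) gives $\widetilde H_1(\link{\Delta}{\sigma})\cong\mathbb{Z}$, so the link is not contractible. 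If two or more are empty, the link is disconnected. Thus in every non--Path-of-Facets case the link fails to be contractible, proving~(a).

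For part~(b), assume the Path-of-Facets Condition with the permutation $(a,b,c)$ of the statement. By Lemma~\ref{lem:intersection-facet} every non-facet mandatory face is an intersection of two or more of $F_1,F_2,F_3$, so the candidates are $F_a\cap F_b$, $F_b\cap F_c$, $F_a\cap F_c$, and $\sigma$. Since $(F_a\cap F_b)\smallsetminus F_c\neq\varnothing$, the face $F_a\cap F_b$ is a nonempty intersection of two facets contained in no third facet, hence mandatory by Lemma~\ref{lem:intersection-2-facets}; similarly $F_b\cap F_c$ is mandatory. Meanwhile $(F_a\cap F_c)\smallsetminus F_b=\varnothing$ forces $F_a\cap F_c=\sigma$, whose link is contractible by part~(a), so $\sigma$ is not mandatory. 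Therefore $\mincode(\Delta)=\{F_a,F_b,F_c,F_a\cap F_b,F_b\cap F_c,\varnothing\}$, and a short check (using that the facets are distinct together with the Path-of-Facets inequalities) shows these codewords are pairwise distinct.

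It then remains to realize this code convexly, via Figure~\ref{fig:realization-1-dim}. I would place five consecutive atoms on $\mathbb{R}$ labeled $F_a,\,F_a\cap F_b,\,F_b,\,F_b\cap F_c,\,F_c$ and take each $U_i$ to be a small open interval covering exactly the atoms whose label contains $i$. The essential point, and the only place the Path-of-Facets Condition is used, is that for every neuron $i$ these atoms form a \emph{contiguous} block: contiguity can fail only for the membership pattern $i\in F_a\cap F_c$ with $i\notin F_b$, which is precisely what $(F_a\cap F_c)\smallsetminus F_b=\varnothing$ forbids. Hence each $U_i$ is an open interval, so convex and open. I would finish by checking the generated code equals $\mincode(\Delta)$: the five atoms produce the five nonempty codewords above, the complement produces $\varnothing$, and each transition region between consecutive atoms carries the intersection of the two adjacent atom-labels, which equals one of them (e.g.\ $F_a\cap(F_a\cap F_b)=F_a\cap F_b$), so no spurious codewords arise. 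Thus $\mincode(\Delta)$ is open convex.

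The homotopy calculation in~(a) is routine once the link is recognized as three simplices glued along the Path-of-Facets sets, so the load-bearing step is the contiguity argument in~(b): it is exactly the bridge between the purely combinatorial Path-of-Facets Condition and the existence of an interval (one-dimensional convex) realization, and verifying that this realization introduces no extra codewords is the part that requires care.
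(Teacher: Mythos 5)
Your proposal is correct, but note that the paper does not actually prove this lemma: it is imported wholesale from \cite[Lemmas~3.1 and~3.3]{Shiu}, so there is no in-text proof to compare against. Your self-contained argument follows what is essentially the standard route. For part~(a), your identification $\link{\Delta}{F_1\cap F_2\cap F_3}=\simp{G_1}\cup\simp{G_2}\cup\simp{G_3}$ with $G_i\cap G_j=(F_i\cap F_j)\smallsetminus F_k$ is exactly right, and the case analysis (one empty pairwise intersection gives a contractible gluing of simplices along contractible pieces; zero empty gives $\widetilde H_1\cong\mathbb{Z}$ by Mayer--Vietoris; two or more empty gives a disconnected link, using that each $G_i\neq\varnothing$ by facet maximality) is complete and covers the biconditional. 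One small streamlining: the paper's own Lemma~\ref{nerve} does this bookkeeping for you --- the link is homotopy equivalent to $\mathcal{N}(\{G_1,G_2,G_3\})$, a graph on three vertices with no $2$-simplex, which is contractible precisely when it is a path, i.e.\ when exactly one of the three sets in Definition~\ref{def:path-facets} is empty; this replaces the Mayer--Vietoris computation. For part~(b), your determination of $\mincode(\Delta)$ via Lemmas~\ref{lem:intersection-facet} and~\ref{lem:intersection-2-facets} together with part~(a) is correct (the identity $F_a\cap F_c=F_a\cap F_b\cap F_c$ forced by $(F_a\cap F_c)\smallsetminus F_b=\varnothing$ is the key point), and the contiguity argument for the five-atom interval realization --- that the only non-contiguous membership pattern is $i\in(F_a\cap F_c)\smallsetminus F_b$, which the hypothesis forbids --- is precisely the load-bearing step, correctly identified and correctly verified, including the check that transition points between atoms generate only codewords already present.
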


\begin{ex}[Example~\ref{ex:max-intersection-L22} continued] \label{ex:path-facets-L22}
We revisit the code
$\C_{22} = \{ 
\mathbf{134}, \mathbf{1357}, \mathbf{257},\mathbf{356},
13, 35, 57, \varnothing\}$, and consider the simplicial complex $\Delta$ which has three of the four facets of $\Delta(\C_{22})$, specifically, $F_1=134$, $F_2=1357$, and $F_3=356$.  (In other words, $\Delta = \Delta(\{F_1,F_2,F_3\})$.)  
The simplicial complex $\Delta$ satisfies the Path-of-Facets Condition, because $(F_1 \cap F_2) \smallsetminus F_3 = 1$,  $(F_1 \cap F_3) \smallsetminus F_2=5$, and $(F_2 \cap F_3) \smallsetminus F_1 = \varnothing$.  Thus, by Lemma~\ref{lem:path}(b), $\mathcal{C}_{\text{min}}(\Delta) =
    \{ \mathbf{F_1}, \mathbf{F_2}, \mathbf{F_3}, F_1 \cap F_2, F_2 \cap F_3\} = \{ \mathbf{134}, \mathbf{1357}, \mathbf{356}, 13, 35\}$,  
    and a convex realization of $\mathcal{C}_{\text{min}}(\Delta)$, which has been ``extended'' into $\mathbb{R}^2$, is shown here: 
\begin{center}
    \begin{tikzpicture}[scale = 1.5]
    \draw (-1.5,0) rectangle (8,1);

    \draw (.5,0) -- (.5,1);
    \draw (2.5,0) -- (2.5,1);
    \draw (4,0) -- (4,1);
    \draw (6,0) -- (6,1);

    \node at (-.5,.5) {$\mathbf{F_1=134}$};
    \node at (1.5,.5) {$F_1\cap F_2=13$};
    \node at (3.25,.5) {$\mathbf{F_2=1357}$};
    \node at (5,.5) {$F_2 \cap F_3=35$};
    \node at (7.,.5) {$\mathbf{F_3=356}$};
\end{tikzpicture}
\end{center}
Observe that this realization of $\mathcal{C}_{\text{min}}(\Delta)$ can be modified to obtain the realization (of the larger code~$\C_{22}$) shown earlier in Figure~\ref{fig:runningex}.  This idea underlies a later proof (in Section~\ref{sec:L22}).
\end{ex}

\begin{ex}[Example~\ref{ex:local-obstruction-L24} continued]\label{ex:path-facets-L24}
Recall that the 
facets of $\Delta(\C_{24})$ are $F_1=123$, $F_2=1246$, $F_3=145$, and $F_4=356$.   
It follows that 
$(F_1 \cap F_3) \smallsetminus F_2 = \varnothing$, while  $(F_1 \cap F_2) \smallsetminus F_3= 2$ and 
$(F_2 \cap F_3) \smallsetminus F_1 = 4$.  Therefore, $\Delta(\C_{24})$ satisfies the Path-of-Facets Condition. 
\end{ex}

\subsection{Codes with few maximal codewords} \label{sec:few-max}
Johnston {\em et al.}\ used Lemma~\ref{lem:path} to show that, for codes with up to three maximal codewords, convexity is characterized by having no local obstructions.  That is, for such codes, the converse of Lemma~\ref{lem: convex-no-obs} holds, as follows~\cite[Theorem~1.1]{Shiu}.

\begin{prop} \label{prop:3-max}
    If $\C$ is a code with up to three maximal codewords, then $\C$ is convex if and only if $\C$ has no local obstructions.
\end{prop}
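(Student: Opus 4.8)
The plan is to treat the two implications separately. The forward implication --- convex $\Rightarrow$ no local obstructions --- is immediate from the contrapositive of Lemma~\ref{lem: convex-no-obs}, so all the work lies in the reverse implication. For that, the first move is a reduction: assuming $\C$ has no local obstructions means $\mincode(\Delta(\C)) \subseteq \C$, and since $\mincode(\Delta(\C))$ contains every facet of $\Delta(\C)$ we have $\Delta(\mincode(\Delta(\C))) = \Delta(\C)$; hence $\mincode(\Delta(\C)) \subseteq \C \subseteq \Delta(\mincode(\Delta(\C)))$, and Lemma~\ref{lem:monotonicity} reduces the problem to showing that the minimal code $\mincode(\Delta(\C))$ is itself convex. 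I would then argue by cases on the number $m \le 3$ of facets of $\Delta := \Delta(\C)$.

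When $m \le 2$, I would check that $\mincode(\Delta)$ is max-intersection-complete and invoke Lemma~\ref{Lem:max-intersection}. If $m \le 1$ there are no intersections of two or more facets, so this holds vacuously; if $m = 2$, the only possible max-intersection face is $F_1 \cap F_2$, which, when nonempty, lies in no third facet and is therefore mandatory by Lemma~\ref{lem:intersection-2-facets}, hence a codeword of $\mincode(\Delta)$.

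When $m = 3$, with facets $F_1, F_2, F_3$, I would split on the Path-of-Facets Condition. If it holds, Lemma~\ref{lem:triple}(b) directly yields the convexity of $\mincode(\Delta)$ via its explicit realization. If it fails, I would instead show $\mincode(\Delta)$ is max-intersection-complete: Lemma~\ref{lem:triple}(a) then makes the link of $F_1 \cap F_2 \cap F_3$ non-contractible, so the triple intersection (if nonempty) is mandatory; and any nonempty pairwise intersection $F_i \cap F_j$ unequal to the triple cannot be contained in the remaining facet $F_k$ (containment would force $F_i \cap F_j \subseteq F_1 \cap F_2 \cap F_3 \subseteq F_i \cap F_j$, i.e.\ equality), so it is mandatory by Lemma~\ref{lem:intersection-2-facets}. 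Every max-intersection face is thus a codeword, and Lemma~\ref{Lem:max-intersection} again applies.

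The main obstacle is precisely the case of exactly three facets satisfying the Path-of-Facets Condition. Here $\mincode(\Delta)$ is genuinely \emph{not} max-intersection-complete: the nonempty triple intersection $F_1 \cap F_2 \cap F_3$ is a max-intersection face, yet its link is contractible, so it is not a codeword (this is exactly the behavior of $\C_{22}$ in Example~\ref{ex:path-facets-L22}). Lemma~\ref{Lem:max-intersection} therefore cannot be applied, and one must instead appeal to the explicit one-dimensional convex realization supplied by Lemma~\ref{lem:triple}(b); recognizing that this is the one case escaping the max-intersection-complete framework is the crux of the argument.
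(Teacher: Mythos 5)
Your proof is correct and follows essentially the route the paper indicates: Proposition~\ref{prop:3-max} is not proved in the paper but quoted from Johnston \emph{et al.}~\cite{Shiu}, whose argument the paper describes as resting on Lemma~\ref{lem:path}, and your reconstruction --- reduce to $\mincode(\Delta(\C))$ via Lemma~\ref{lem:monotonicity}, dispose of the cases where the minimal code is max-intersection-complete via Lemmas~\ref{Lem:max-intersection} and~\ref{lem:intersection-2-facets}, and handle the one remaining case (three facets satisfying the Path-of-Facets Condition) with the explicit one-dimensional realization of Lemma~\ref{lem:triple}(b) --- is precisely that route, assembled correctly from the paper's own lemmas. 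You also correctly single out the Path-of-Facets case as the one that escapes the max-intersection-complete framework, which is indeed the crux.
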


Proposition~\ref{prop:3-max} does not extend to codes with four maximal codewords (i.e., 4-maximal codes).  
Indeed, we saw (in Examples~\ref{ex: sprocket-L24} and~\ref{ex:local-obstruction-L24}) that the $4$-maximal code 
$\C_{24}$ has no local obstructions and yet is non-convex (due to having a sprocket).  A similar such example is as follows.

\begin{ex} \label{ex:original-counterexample-L26}
Consider the following 4-maximal code: 
\begin{align}
    \label{eq:C26}
\mathcal{C}_{26} ~:=~ 
    \{\mathbf{123}, \mathbf{134}, \mathbf{145}, 
    \mathbf{2345}, 
    13, 14, 23, 34, 45, 4, 5, \varnothing\}~.
\end{align}
This was the first code found to be non-convex despite having no local obstructions~\cite[Theorem~3.1]{Lienkaemper}. The non-convexity comes from having a wheel (coming from a ``wheel frame'')~\cite[Proposition~4.16]{wheels}.  
\end{ex}

In light of the above examples, it is natural to ask whether these two types of obstructions to convexity -- wheels and local obstructions -- are enough to characterize convexity in 4-maximal codes.  Jeffs conjectures that the answer is ``yes'' (Conjecture~\ref{conj-Jeffs}),
and this conjecture is known to hold for all codes on up to $5$ neurons~\cite[Theorem~3.8]{wheels} and all \uline{minimal} codes on $6$ neurons~\cite[Table~1]{wheels}.

\begin{remark}
In light of Conjecture~\ref{conj-Jeffs}, it is natural to ask whether -- for codes with {\em more than four} maximal codewords -- wheels and local obstructions together characterize convexity.  This question is open.  
A starting point for such an investigation might be a set of 96 5-maximal codes on 6 neurons that are minimal codes (and thus have no local obstructions), but whose convexity status is unknown (also unknown  is whether they contain wheels)~\cite[Table~1]{wheels}.
\end{remark}

The goal of our work is to resolve many cases of Conjecture~\ref{conj-Jeffs}.
Our strategy is to categorize 4-maximal codes based on 
the structure of their maximal codewords and their intersections.  In other words, we consider the ``nerve'' of the set of maximal codewords, which we define next.

\begin{defn}[Nerve $\mathcal{N}(\mathcal{F}(\C))$] \label{def:nerve}
 Let $\C$ be a code, and let $\mathcal{F}(\C)=\{F_1,F_2,\dots,F_m\}$ denote the set of maximal codewords of $\C$ (equivalently, $\mathcal{F}(\C)$ is the set of facets of $\Delta(\C)$).  
    The \textit{nerve of the set of maximal codewords} of $\C$, which we denote by $\mathcal{N}(\mathcal{F(C)})$, 
    is the simplicial complex on $\mathcal{F}(\C)$ in which $\{F_{i_1},F_{i_2},\dots, F_{i_k}\}$ is a face if and only if the intersection $F_{i_1}\cap F_{i_2} \cap \dots \cap F_{i_k}$ is nonempty.
\end{defn}

In what follows, we label codes based on their nerve $\mathcal{N}(\mathcal{F(C)})$.  For instance, an ``L22 code'' is one for which $\mathcal{N}(\mathcal{F(C)})$ is the simplicial complex labeled by L22 in Figure~\ref{fig:image-simplicial-complex}.  See the following notation.
    
\begin{notation}
    An ``L$\#\#$ code'' is a code $\C$ for which the nerve $\mathcal{N}(\mathcal{F(C)})$ is the simplicial complex $L\#\#$ in Figure \ref{fig:image-simplicial-complex}.
\end{notation}

\begin{ex}[Example~\ref{ex:path-facets-L22} continued; L22 code] \label{ex:nerve-is-L22}
For the code $\C_{22} = \{\mathbf{134},\mathbf{1357},\mathbf{257},\mathbf{356}, 13, 35, 57, \varnothing\}$, recall that we label the maximal codewords (equivalently, facets of $\Delta(\C_{22})$) by 
$F_1=134$, $F_2=1357$, $F_3=356$, and $F_4=257$.  Intersections of these maximal codewords were shown earlier in~\eqref{eq:facet-intersection-L22}, and this computation allows us to conclude that the nerve of the set of maximal codewords of $\C_{22}$ consists of two $2$-simplices $\{F_1,F_2,F_3\}$ and $\{F_2,F_3,F_4\}$ (and all of their subsets).  We conclude that this nerve is the simplicial complex labeled by L22 in Figure~\ref{fig:image-simplicial-complex}.
\end{ex}

\begin{ex}[Example~\ref{ex:path-facets-L24} continued; L24 code] \label{ex:nerve-is-L24}
We return to the following code: 
$$\C_{24} ~=~ \{\mathbf{123}, \mathbf{1246}, \mathbf{145}, \mathbf{356}, 12, 14, 3 ,5 ,6, \varnothing\}~.$$
From the intersections of facets~\eqref{eq:facet-intersections-L24}, we see that the nerve $\mathcal{N}(\mathcal{F}(\C_{24}))$ consists of a $2$-simplex $\{F_1,F_2,F_3\}$ and three edges $\{F_1,F_4\}$, $\{F_2,F_4\}$, and $\{F_3,F_4\}$.  In other words, the nerve is simplicial complex labeled by L24 in Figure~\ref{fig:image-simplicial-complex}.
\end{ex}

\begin{ex}[Example~\ref{ex:original-counterexample-L26} continued; L26 code] \label{ex:nerve-is-L26}
Recall that the code $\C_{26}$, from~\eqref{eq:C26},
has $4$ maximal codewords, which we label as follows:
$F_1=2345$, $F_2=123$, $F_3=134$, $F_4=145$.  Observe that $F_1 \cap F_2\cap F_4 = \varnothing$, while all other triplewise intersections $F_i \cap F_j \cap F_k$ are nonempty.  We conclude that the  
    nerve $\mathcal{N}(\mathcal{F}(\C_{26}))$ is the simplicial complex labeled by L26 in Figure~\ref{fig:image-simplicial-complex}.
\end{ex}

We end this section by recalling a result that can be used to confirm the contractibility of links.  The following result was proven using the Nerve Lemma~\cite[Equation~(2)]{Lienkaemper}.

\begin{lemma} \label{nerve}
Let $\sigma \in \Delta$, where $\Delta$ is a simplicial complex. Denote the set of facets of the link $\mathrm{Lk}_{\Delta}(\sigma)$ by 
    \[\mathcal{L}_{\Delta}(\sigma) ~=~ \{F \smallsetminus \sigma : F \,\, \text{is a facet of} \,\, \Delta \,\, \text{containing} \,\,  \sigma\}~.\] Then we have a homotopy equivalence $\mathrm{Lk}_{\Delta}(\sigma) \simeq \mathcal{N}(\mathcal{L}_{\Delta}(\sigma))$. In particular, $\mathrm{Lk}_{\Delta}(\sigma)$ is contractible if and only if $\mathcal{N}(\mathcal{L}_{\Delta}(\sigma))$ is contractible. 
\end{lemma}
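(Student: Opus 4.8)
The plan is to realize the link as a union of full simplices, one for each of its facets, and then apply the Nerve Lemma to this cover. First I would record the (easy but necessary) observation that $\mathcal{L}_{\Delta}(\sigma)$ really is the set of facets of $\mathrm{Lk}_{\Delta}(\sigma)$: if $F$ is a facet of $\Delta$ containing $\sigma$, then $F \smallsetminus \sigma$ lies in the link, and any face $\tau$ of the link with $\tau \supseteq F \smallsetminus \sigma$ satisfies $\tau \cup \sigma \supseteq F$ with $\tau \cup \sigma \in \Delta$, forcing $\tau \cup \sigma = F$ and hence $\tau = F \smallsetminus \sigma$ by maximality of $F$; conversely, any facet $\rho$ of the link extends (via $\rho \cup \sigma$) to a facet $F$ of $\Delta$ containing $\sigma$, and maximality gives $\rho = F \smallsetminus \sigma$. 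Thus $\mathcal{L}_{\Delta}(\sigma) = \{L_1, \dots, L_m\}$ enumerates the facets of $K := \mathrm{Lk}_{\Delta}(\sigma)$.

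Next I would build the cover. For each facet $L_i$, let $\overline{\Delta}(L_i)$ denote the full simplex on the vertex set $L_i$, regarded as a subcomplex of $K$; passing to geometric realizations, $|\overline{\Delta}(L_i)|$ is the closed geometric simplex spanned by the vertices of $L_i$. Since every face of $K$ is contained in some facet $L_i$, these subcomplexes cover $K$, i.e., $|K| = \bigcup_{i=1}^m |\overline{\Delta}(L_i)|$. The key geometric fact is that the intersection of geometric simplices is the geometric simplex on the intersection of their vertex sets: $|\overline{\Delta}(L_{i_1})| \cap \cdots \cap |\overline{\Delta}(L_{i_k})| = |\overline{\Delta}(L_{i_1} \cap \cdots \cap L_{i_k})|$. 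A full simplex is contractible when nonempty and empty otherwise, so every nonempty intersection in this cover is contractible; that is, $\{|\overline{\Delta}(L_i)|\}$ is a good cover of $|K|$.

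I would then invoke the Nerve Lemma for good covers of a space by closed contractible subcomplexes (equivalently, the simplicial Nerve Lemma), which yields a homotopy equivalence $|K| \simeq \mathcal{N}$, where $\mathcal{N}$ is the nerve of the cover: its vertices are the $L_i$, and $\{L_{i_1},\dots,L_{i_k}\}$ is a face exactly when $|\overline{\Delta}(L_{i_1})| \cap \cdots \cap |\overline{\Delta}(L_{i_k})| \neq \varnothing$. By the intersection identity above, this happens precisely when $L_{i_1} \cap \cdots \cap L_{i_k} \neq \varnothing$, which is the defining condition for a face of $\mathcal{N}(\mathcal{L}_{\Delta}(\sigma))$. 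Hence the nerve of the cover is literally $\mathcal{N}(\mathcal{L}_{\Delta}(\sigma))$, establishing $\mathrm{Lk}_{\Delta}(\sigma) \simeq \mathcal{N}(\mathcal{L}_{\Delta}(\sigma))$; the final ``in particular'' is then immediate, since contractibility is a homotopy invariant.

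The main obstacle is getting the hypotheses of the Nerve Lemma exactly right and handling the degenerate case where $\sigma$ is itself a facet of $\Delta$: then $\mathcal{L}_{\Delta}(\sigma) = \{\varnothing\}$ and $K = \{\varnothing\}$, and I would check that both sides reduce to a point (so that the claimed equivalence and the contractibility statement still hold) under whatever convention the paper adopts for the empty simplicial complex. A secondary point worth stating carefully is the intersection identity for geometric simplices, since it is exactly what makes the combinatorial nerve $\mathcal{N}(\mathcal{L}_{\Delta}(\sigma))$ agree with the topological nerve of the cover.
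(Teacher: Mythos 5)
Your proof is correct and follows the same route the paper takes: the paper does not prove Lemma~\ref{nerve} itself but cites it as an application of the Nerve Lemma to the cover of the link by the closed simplices on its facets, which is exactly the good-cover argument you spell out. Your identification of $\mathcal{L}_{\Delta}(\sigma)$ with the facets of the link and your handling of the degenerate case where $\sigma$ is a facet are both sound.
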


\section{Results} \label{sec:results}

In this section, we present our results on the convexity of $4$-maximal codes.  
As explained in the prior section, our strategy is to consider separate cases based on the nerve of the set of maximal codewords.  Indeed, such a nerve $\mathcal{N}$ is a simplicial complex on $4$ vertices and therefore is (up to symmetry) one of the simplicial complexes shown earlier in Figure~\ref{fig:image-simplicial-complex}. More precisely, $\mathcal{N}$ is one of the simplicial complexes L9 to L28. 
Our main result concerns the cases of L9 to L23, as follows.

\begin{thm}[Main result, L9--L23 codes] \label{thm:summary}
    Let $\C$ be a code with four maximal codewords, 
    and let $\mathcal{N}$ denote the nerve of the set of maximal codewords of $\C$. 
If $\mathcal{N}$ is one of the simplicial complexes L9 to L23 (in Figure~\ref{fig:image-simplicial-complex}), then $\C$ is convex if and only if $\C$ has no local obstructions.
\end{thm}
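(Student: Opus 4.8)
The implication ``$\C$ convex $\Rightarrow$ $\C$ has no local obstructions'' is immediate and uses nothing about the nerve: a code with a local obstruction is non-convex by Lemma~\ref{lem: convex-no-obs}. So assume $\C$ has no local obstructions; we must show $\C$ is convex. By the characterization recorded just after~\eqref{eq:c-min}, the hypothesis says precisely that $\mincode(\Delta(\C)) \subseteq \C$, and since $\C \subseteq \Delta(\C)$ with $\Delta(\mincode(\Delta(\C))) = \Delta(\C)$ (passing to the minimal code changes neither the facets nor, hence, the simplicial complex), Lemma~\ref{lem:monotonicity} reduces the whole theorem to one uniform claim: \emph{for every finite simplicial complex $\Delta$ with exactly four facets whose nerve is one of L9--L23, the minimal code $\mincode(\Delta)$ is convex.} I would establish this by splitting into three regimes determined by the nerve (fixing, up to relabeling the facets, one representative of each type in Figure~\ref{fig:image-simplicial-complex}).

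\emph{Regime 1: disconnected nerve (L9--L12, L15, L16).} A disconnected nerve splits the four facets into two nonempty groups joined by no edge; since an edge records a nonempty pairwise intersection, facets in different groups are disjoint, so they occupy disjoint neuron sets and $\Delta = \Delta_A \sqcup \Delta_B$ decomposes as a disjoint union of two complexes on disjoint vertex sets, each with at most three facets. No face of $\Delta$ can bridge the two vertex sets, so a face contained in $\Delta_A$ has the same link in $\Delta$ as in $\Delta_A$; hence mandatory faces respect the splitting and $\mincode(\Delta) = \mincode(\Delta_A) \cup \mincode(\Delta_B)$. Each $\mincode(\Delta_i)$ has at most three maximal codewords and no local obstructions, so it is convex by Proposition~\ref{prop:3-max}, and realizing the two pieces in disjoint, widely separated regions of $\mathbb{R}^d$ gives a convex realization of their union $\mincode(\Delta)$.

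\emph{Regime 2: connected nerve with no $2$-simplex (L13, L14, L17, L19, L20, L23).} Here no three facets share a point, so the only nonempty facet intersections are pairwise, and a nonempty $F_i \cap F_j$ cannot lie inside a third facet $F_k$ (that would make $\{F_i,F_j,F_k\}$ a $2$-simplex of the nerve). Thus every nonempty $F_i \cap F_j$ is the intersection of exactly two facets and is mandatory by Lemma~\ref{lem:intersection-2-facets}, so $\mincode(\Delta)$ contains every max-intersection face; that is, $\mincode(\Delta)$ is max-intersection-complete, and Lemma~\ref{Lem:max-intersection} gives convexity at once. (This argument never uses that there are exactly four facets, which is why it should extend to all nerves with no $2$-simplices.)

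\emph{Regime 3: connected nerve containing a $2$-simplex (L18, L21, L22).} This is where I expect the real difficulty. Now a triple intersection $F_1 \cap F_2 \cap F_3$ is nonempty, and whether it is mandatory is governed by the Path-of-Facets Condition for the three-facet subcomplex (Lemma~\ref{lem:triple}(a)); when it fails to be mandatory, $\mincode(\Delta)$ is \emph{not} max-intersection-complete and the clean criterion of Regime 2 is unavailable. The plan is to build an explicit convex realization $\mathcal{U}$ with $\C(\mathcal{U}) = \mincode(\Delta)$: first realize the three-facet ``path'' subcode along a line via Lemma~\ref{lem:triple}(b), extend it into $\mathbb{R}^2$ as in Example~\ref{ex:path-facets-L22}, and then insert the region for the remaining facet as an axis-parallel box meeting exactly the facets prescribed by the nerve, in the spirit of Figure~\ref{fig:runningex}. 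The main obstacle is to carry this out uniformly over all $\Delta$ realizing a fixed nerve (the facets and the sizes of their intersections vary) and, for each, to verify three things at once: (i) no two facets meet that should be disjoint and no triple meets that should not, so that $\Delta(\C(\mathcal{U})) = \Delta$; (ii) every mandatory face of $\Delta$ occurs as a codeword of $\mathcal{U}$; and (iii) no non-mandatory, non-facet face occurs, so that the realized code is exactly $\mincode(\Delta)$ rather than something larger. Keeping all $U_i$ convex while suppressing the spurious codewords created by the inserted box, and tracking the several sub-cases of the Path-of-Facets Condition for the embedded triangle(s), is the crux; Lemma~\ref{nerve} is the natural tool for confirming that the links of the candidate mandatory faces behave as claimed in the borderline sub-cases.
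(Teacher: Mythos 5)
Your overall decomposition is exactly the paper's: Regime 1 is Proposition~\ref{prop:disconnected-nerve}, Regime 2 is Proposition~\ref{prop:notriangle}/Corollary~\ref{cor:no-triangle}, and the reduction to $\mincode(\Delta(\C))$ via Lemma~\ref{lem:monotonicity} is valid. Regimes 1 and 2 are essentially complete proofs. The genuine gap is Regime 3 (L18, L21, L22), which is where nearly all of the paper's work lives and which you have left as a declared plan rather than an argument.

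Concretely, two things are missing. First, in the sub-case where the Path-of-Facets Condition \emph{fails} for a $2$-simplex of the nerve, the triple intersection is mandatory, but to conclude that $\mincode(\Delta)$ is max-intersection-complete you must also show every nonempty pairwise intersection is mandatory; this requires ruling out a nonempty $F_i\cap F_j$ being swallowed by a third facet, which uses the specific structure of L18/L21/L22 (the paper's Proposition~\ref{prop: pathoffaces-convex} and Lemma~\ref{lem:L22pairwise}). Second, and more seriously, in the sub-case where the Path-of-Facets Condition \emph{holds}, the gluing must actually be carried out, with cases depending on which facet of the $2$-simplex sits in the middle of the path and how the fourth facet attaches: the paper needs three cases for L18 (Lemma~\ref{lem:L18}), three for L21 (Lemma~\ref{L21}), and for L22 --- which has \emph{two} $2$-simplices and so does not fit your single-triangle, insert-a-box description at all --- a separate structural lemma ($\mincode=(\C_1^\ast)_{\min}\cup(\C_4^\ast)_{\min}$, Lemma~\ref{lem:L22min}) followed by four cases with sub-cases whose realizations are not axis-parallel boxes (a half-disk glued to a rectangle in Case 3a, diagonally cut polygons in Case 2b), each requiring a neuron-by-neuron check that the sets $U_i$ are convex and that no spurious codewords appear. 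Writing that keeping the $U_i$ convex while suppressing spurious codewords ``is the crux'' correctly identifies the difficulty but does not resolve it; as written, the theorem is unproved for L18, L21, and L22.
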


Theorem~\ref{thm:summary} immediately resolves many cases of Conjecture~\ref{conj-Jeffs}. 
\begin{cor} \label{cor:conjecture}
    Conjecture~\ref{conj-Jeffs} is true for all codes $\C$ for which the nerve of the set of 
    maximal codewords 
    is one of the simplicial complexes L9 to L23 (in Figure~\ref{fig:image-simplicial-complex}).
\end{cor}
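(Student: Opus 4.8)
The plan is to derive Corollary~\ref{cor:conjecture} as an essentially immediate consequence of the main result, Theorem~\ref{thm:summary}, together with the two general obstruction results recalled in Section~\ref{sec:no-convex}. Fix a code $\C$ whose nerve $\mathcal{N}(\mathcal{F}(\C))$ is one of L9 through L23; since each of these simplicial complexes has exactly four vertices, $\C$ is automatically a $4$-maximal code, so the hypotheses of both Theorem~\ref{thm:summary} and Conjecture~\ref{conj-Jeffs} apply. The goal is to establish the biconditional in Conjecture~\ref{conj-Jeffs}, namely that $\C$ is convex if and only if $\C$ has neither a local obstruction nor a wheel, and I would do this by verifying the two implications separately.

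For the forward direction, I would note that necessity holds for \emph{every} code and uses nothing about the nerve: if $\C$ is convex, then Lemma~\ref{lem: convex-no-obs} shows $\C$ has no local obstruction, and Lemma~\ref{lem:sprockets-nonconvex}(2) shows $\C$ has no wheel. Thus the implication ``convex $\Rightarrow$ no local obstruction and no wheel'' is free.

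For the converse, the key observation is that the wheel hypothesis is redundant here. Suppose $\C$ has no local obstruction and no wheel. In particular $\C$ has no local obstruction, so Theorem~\ref{thm:summary} --- which for nerves L9 through L23 characterizes convexity purely by the absence of local obstructions --- immediately yields that $\C$ is convex. Combining the two directions gives exactly the statement of Conjecture~\ref{conj-Jeffs} for $\C$, completing the argument.

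I do not expect any genuine obstacle in this step: all the difficulty is already absorbed into Theorem~\ref{thm:summary}, whose proof constitutes the bulk of Section~\ref{sec:results}. The only points worth checking are bookkeeping ones --- that the listed nerves all have four vertices (so the codes in question really are $4$-maximal and covered by both the theorem and the conjecture), and that the forward direction of the conjecture is indeed unconditional. In effect, the corollary simply records the observation that, for these fifteen nerve types, the ``wheel'' clause in Jeffs' conjecture adds nothing beyond the local-obstruction clause.
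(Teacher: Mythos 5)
Your proposal is correct and matches the paper's intended argument: the paper treats the corollary as an immediate consequence of Theorem~\ref{thm:summary}, and your two-direction verification (necessity from Lemma~\ref{lem: convex-no-obs} and Lemma~\ref{lem:sprockets-nonconvex}(2), sufficiency because the no-wheel hypothesis is redundant once Theorem~\ref{thm:summary} gives convexity from the absence of local obstructions alone) is exactly the reasoning being left implicit. No issues.
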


In the case of the simplicial complex L24, the above result partially extends, as follows.

\begin{thm}[L24 codes] \label{thm:L24-summary}
~
\begin{enumerate}
    \item  Conjecture~\ref{conj-Jeffs} is true for all \uline{minimal} codes $\C$ for which the nerve of the set of 
    maximal codewords 
    is the simplicial complex L24.  
    \item There exists a ($4$-maximal) L24 code that 
has no local obstructions, but 
has a sprocket and therefore is non-convex.
\end{enumerate}
\end{thm}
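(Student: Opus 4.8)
The plan is to dispatch part~(2) by the running example already developed in the paper and to prove part~(1) through a clean dichotomy governed by the Path-of-Facets Condition (Definition~\ref{def:path-facets}) applied to the three pairwise- and triple-intersecting facets. Throughout I label the facets so that $F_1\cap F_2\cap F_3\neq\varnothing$ while every triple intersection involving $F_4$ is empty, and I write $\sigma:=F_1\cap F_2\cap F_3$.

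For part~(2), I would simply cite $\C_{24}$. By Example~\ref{ex:nerve-is-L24} it is a $4$-maximal L24 code; by Example~\ref{ex:local-obstruction-L24} it equals $\mincode(\Delta(\C_{24}))$ and so has no local obstructions; and by Example~\ref{ex: sprocket-L24} the tuple $(3,6,5,1)$ is a sprocket, so Lemma~\ref{lem:sprockets-nonconvex} gives a wheel and hence non-convexity. No further work is needed.

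For part~(1), I first observe that a minimal code automatically has no local obstructions, so for a minimal L24 code Conjecture~\ref{conj-Jeffs} reduces to ``$\C$ is convex if and only if $\C$ has no wheel,'' and the forward implication is immediate from Lemma~\ref{lem:sprockets-nonconvex}(2). The governing dichotomy comes from computing $\link{\Delta(\C)}{\sigma}$ via Lemma~\ref{nerve}: since the facets containing $\sigma$ are exactly $F_1,F_2,F_3$, this link is homotopy equivalent to the nerve of $\{F_1\smallsetminus\sigma, F_2\smallsetminus\sigma, F_3\smallsetminus\sigma\}$, whose edges record which legs $(F_i\cap F_j)\smallsetminus F_k$ are nonempty. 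Checking the four possibilities (zero, one, two, or three empty legs) shows this nerve is contractible precisely when exactly one leg is empty, which by Lemma~\ref{lem:triple}(a) is exactly the Path-of-Facets Condition; hence $\sigma$ is mandatory if and only if $\{F_1,F_2,F_3\}$ \emph{fails} the Path-of-Facets Condition. When it fails, I would show $\C$ is max-intersection-complete and conclude convexity from Lemma~\ref{Lem:max-intersection}: each $F_i\cap F_4$ is an intersection of exactly two facets contained in no third facet, hence mandatory by Lemma~\ref{lem:intersection-2-facets}; each triangle pairwise intersection $F_i\cap F_j$ is either mandatory (nonempty leg) or equal to $\sigma$; and $\sigma$ itself is mandatory in this case. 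Thus every nonempty intersection of two or more facets already lies in $\mincode(\Delta(\C))=\C$, so $\C$ is convex and, being convex, has no wheel.

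In the complementary case, where the Path-of-Facets Condition holds (so $\sigma$ is not mandatory and the two nonempty legs share a common facet, which I take to be $F_2$), I would exhibit a sprocket generalizing the one for $\C_{24}$: a hub $h\in\sigma$, spokes $a\in F_1\cap F_4$, $b\in F_2\cap F_4$, $c\in F_3\cap F_4$, and witnesses $\rho_1=\{h,e\}$, $\rho_3=\{h,f\}$ with $e\in(F_1\cap F_2)\smallsetminus F_3$ and $f\in(F_2\cap F_3)\smallsetminus F_1$, so that $(\{a\},\{b\},\{c\},\{h\})$ satisfies all conditions of Definition~\ref{def:wheel-sprocket}; Lemma~\ref{lem:sprockets-nonconvex} then forces a wheel and non-convexity. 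In both cases the conjecture holds, which proves part~(1). The main obstacle is the verification of the sprocket conditions in full generality: this is a careful trunk computation whose two delicate points are that the \emph{non}-mandatoriness of $\sigma$ is exactly what makes \textbf{S(2)} hold (every codeword containing $h$ must contain $e$ or $f$, a condition that fails once $\sigma$ or the third triangle-pairwise intersection becomes a codeword), and that choosing $h,e,f$ to be single elements keeps $\rho_1\cup\rho_3\cup\tau$ out of the code so that \textbf{S(3)} holds. Getting this trunk analysis and the link-contractibility bookkeeping right is the crux; once they are in place, the two cases assemble into a proof of the theorem.
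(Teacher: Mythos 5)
Your proposal is correct and follows essentially the same route as the paper: part~(2) is dispatched by the code $\C_{24}$ of Examples~\ref{ex: sprocket-L24} and~\ref{ex:local-obstruction-L24}, and part~(1) is split on whether $\Delta(\{F_1,F_2,F_3\})$ satisfies the Path-of-Facets Condition, with the failing case handled by max-intersection-completeness (the paper's Proposition~\ref{prop: pathoffaces-convex}) and the satisfying case by exhibiting the same sprocket $(F_1\cap F_4,\,F_2\cap F_4,\,F_3\cap F_4,\,F_1\cap F_2\cap F_3)$ with witnesses $F_1\cap F_2$ and $F_2\cap F_3$ (Theorem~\ref{thm: sprocket-L24}). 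Your only deviation is using singleton representatives $a,b,c,h,e,f$ of these intersections rather than the full sets, which yields identical trunks in the minimal code and is immaterial.
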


Theorem~\ref{thm:L24-summary}(2) implies that, unlike in the case of  Theorem~\ref{thm:summary}, convexity of L24 codes 
requires more than avoiding local obstructions. Additionally, Theorem \ref{thm:L24-summary}(2) has already been proven; indeed, such a code and its relevant properties were shown in Examples~\ref{ex: sprocket-L24} and~\ref{ex:local-obstruction-L24}.
Accordingly, the rest of this section is dedicated to proving Theorems~\ref{thm:summary} and~\ref{thm:L24-summary}(1).  Specifically, Theorem~\ref{thm:summary} is obtained by combining Proposition~\ref{prop:disconnected-nerve}, Corollary~\ref{cor:no-triangle}, and Theorems \ref{thm: l18-l21} and~\ref{thm: l22} below; while Theorem~\ref{thm:L24-summary}(1) follows from Theorem~\ref{thm: sprocket-L24}.

\subsection{Nerves that are disconnected or lack $2$-simplices } \label{sec:discon-or-no-2-simplices}

 For codes whose maximal-codeword nerve is disconnected, the connected components of the nerve can be treated separately and  so the classification of convexity for codes with up to $3$ maximal codewords applies, as follows.
 \begin{prop}[L9--L12, L15--L16] \label{prop:disconnected-nerve}
    If $\C$ is a $4$-maximal code and the nerve of the set of maximal codewords is disconnected  -- equivalently, this nerve is, up to relabeling, one of the simplicial complexes L9 to L12, L15, or L16 in Figure~\ref{fig:image-simplicial-complex} -- then $\C$ is convex if and only if $\C$ has no local obstructions.
 \end{prop}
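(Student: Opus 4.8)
The forward implication (convex $\Rightarrow$ no local obstructions) is immediate from Lemma~\ref{lem: convex-no-obs}, so the real task is to show that a $4$-maximal code $\C$ whose maximal-codeword nerve $\mcN = \mcN(\mcF(\C))$ is disconnected is convex whenever it has no local obstructions. The plan is to use disconnectedness to split $\C$ into independent pieces, each with at most three maximal codewords, and then invoke Proposition~\ref{prop:3-max}.

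First I would establish that the neuron set decomposes. Write the connected components of $\mcN$ as $G_1,\dots,G_k$ with $k\ge 2$. I claim the supports of distinct components are disjoint: if a neuron $i$ belonged to facets $F\in G_a$ and $F'\in G_b$ with $a\ne b$, then $i\in F\cap F'$ would force the edge $\{F,F'\}$ into $\mcN$, contradicting that $F$ and $F'$ lie in different components. Hence $[n]$ partitions (after deleting any neuron lying in no maximal codeword, which lies in no codeword at all and so does not affect convexity) into sets $N_1,\dots,N_k$, where $N_j$ is the union of the facets in $G_j$. Because every codeword is a face of $\simp{\C}$ and hence sits inside a single facet, each codeword lies entirely in one $N_j$; thus $\C=\bigcup_j \C_j$ where $\C_j:=\{c\in\C : c\subseteq N_j\}$, and $\simp{\C_j}$ is exactly the subcomplex generated by the facets in $G_j$. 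Since $\mcN$ has four vertices and is disconnected, each $G_j$ contains at most three facets, so each $\C_j$ is a code with at most three maximal codewords.

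Next I would reduce the local-obstruction hypothesis to the pieces. For nonempty $\sigma\subseteq N_j$, any facet containing $\sigma$ must lie in $G_j$, whence $\link{\simp{\C}}{\sigma}=\link{\simp{\C_j}}{\sigma}$; consequently $\sigma$ is mandatory in $\simp{\C}$ if and only if it is mandatory in $\simp{\C_j}$, and $\sigma\in\C$ if and only if $\sigma\in\C_j$. Since every nonempty face of $\simp{\C}$ lies in exactly one $\simp{\C_j}$, it follows that $\C$ has no local obstructions if and only if each $\C_j$ has none. By Proposition~\ref{prop:3-max}, each such $\C_j$ is then convex.

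Finally I would glue the realizations. Given convex open realizations $\mcU^j=(U_i^j)_{i\in N_j}$ of the $\C_j$ (placed in a common $\R^d$, raising dimension by products with an open interval if necessary), I put them inside pairwise-disjoint boxes $B_1,\dots,B_k$ separated by empty space and set $U_i:=U_i^j$ for $i\in N_j$. A point inside $B_j$ then has the same codeword as in $\mcU^j$, since the sets of the other components vanish there, while a point outside every box yields $\varnothing$; hence the generated code is $\bigcup_j \C_j=\C$, realized convexly. Together with the forward direction this gives the equivalence. The only genuinely delicate step is this gluing: I must check that separating the component realizations introduces no spurious ``mixed'' codewords and that the all-zero codeword $\varnothing$ appears, but both follow because the $U_i$ of distinct components have disjoint supports. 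The remaining point — that a disconnected nerve on four vertices is, up to relabeling, one of L9--L12, L15, L16 — is bookkeeping read directly off Figure~\ref{fig:image-simplicial-complex}.
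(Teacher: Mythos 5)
Your proof is correct and follows essentially the same route as the paper's: split $\C$ along the connected components of the nerve into subcodes with at most three maximal codewords each, apply Proposition~\ref{prop:3-max} to each piece, and realize $\C$ by placing the component realizations disjointly in a common $\R^d$. In fact you are slightly more careful than the paper, which implicitly assumes that the absence of local obstructions passes to each component $\C_j$; your link computation $\link{\simp{\C}}{\sigma}=\link{\simp{\C_j}}{\sigma}$ makes that step explicit.
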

 \begin{proof} Convex codes have no local obstructions (Lemma~\ref{lem: convex-no-obs}), so we need only prove the converse.  Accordingly, assume that $\C$ is $4$-maximal and has no local obstructions, and also that the nerve of the set of maximal codewords of $\C$, which we denote by $\mathcal{N},$ is disconnected.  It follows that we can partition $\mathcal N$ into two or more blocks $\mathcal{N}_1,\mathcal{N}_2,\dots,\mathcal{N}_{\ell}$, so that maximal codewords in distinct blocks do not have any neurons in common.  This implies that we can express $\C$ as a union of codes, 
 $\C = \{\varnothing\} \cup \C_1 \cup \C_2 \cup \dots \cup \C_{\ell}$, where $\C_i$ consists of the nonempty codewords of $\C$ that are contained in at least one maximal codeword $\sigma \in \mathcal{N}_i$.  By construction, the $\C_i$'s are disjoint.  
 
 Each of the codes $\C_i \cup \{\varnothing\}$ has at most $3$ maximal codewords and therefore (by Proposition~\ref{prop:3-max}) has a convex realization.  Now a convex realization of $\C$ is obtained by taking the ``union'' of these realizations (more precisely, by taking copies of these realizations, possibly ``extending'' some into higher dimensions, and placing them next to each other in some $\mathbb{R}^d$).
 \end{proof}

Our next result pertains to codes with any number of maximal codewords.  Specifically, the result considers the case of nerves that consist only of vertices and edges, in other words, nerves with no $2$-simplices. 

\begin{prop}[No $2$-simplices]\label{prop:notriangle}
Let $\C$ be a code.  If the nerve of the set of maximal codewords of $\C$ contains no $2$-simplices, then the following are equivalent:
    \begin{enumerate}
        \item $\C$ is convex, 
        \item $\C$ has no local obstructions, and
        \item $\C$ is max-intersection-complete.
    \end{enumerate}
\end{prop}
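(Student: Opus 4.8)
The plan is to establish the equivalence by proving the cycle of implications $(3) \Rightarrow (1) \Rightarrow (2) \Rightarrow (3)$. Two of these come for free from results already recalled: $(3) \Rightarrow (1)$ is exactly Lemma~\ref{Lem:max-intersection} (max-intersection-complete codes are convex), and $(1) \Rightarrow (2)$ is the contrapositive of Lemma~\ref{lem: convex-no-obs} (a code with a local obstruction is non-convex). So the only substantive content is the implication $(2) \Rightarrow (3)$: assuming $\C$ has no local obstructions, I must show $\C$ is max-intersection-complete, and this is where the no-$2$-simplices hypothesis on the nerve enters.

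For $(2) \Rightarrow (3)$, I would first use the hypothesis to pin down exactly what the max-intersection faces are. Since $\mathcal{N}(\mathcal{F}(\C))$ contains no $2$-simplices, every intersection of three or more facets of $\Delta(\C)$ is empty; hence every max-intersection face of $\C$ is a nonempty pairwise intersection $\sigma = F_i \cap F_j$ of two distinct facets. Next I would check that such a $\sigma$ is not contained in any third facet $F_k$: if it were, then $F_i \cap F_j \cap F_k = F_i \cap F_j = \sigma \neq \varnothing$, which would be a $2$-simplex in the nerve, contradicting the hypothesis. Lemma~\ref{lem:intersection-2-facets} then applies and shows $\link{\Delta(\C)}{\sigma}$ is not contractible, i.e., $\sigma$ is a mandatory face. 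Because $\C$ has no local obstructions, every mandatory face lies in $\C$ (equivalently, $\mincode(\Delta(\C)) \subseteq \C$). Therefore every max-intersection face of $\C$ belongs to $\C$, which is precisely the statement that $\C$ is max-intersection-complete.

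The argument is short, and I expect the only real subtlety to be the bookkeeping in the step identifying the max-intersection faces with the nonempty pairwise facet intersections and then verifying the ``not contained in any other facet'' hypothesis of Lemma~\ref{lem:intersection-2-facets}. Both points hinge on the same elementary observation: a nonempty intersection of two facets that is contained in a third facet would force a $2$-simplex in the nerve. No genuinely hard step arises, precisely because the no-$2$-simplices assumption collapses all higher intersections and reduces everything to the two-facet case already handled by Lemma~\ref{lem:intersection-2-facets}.
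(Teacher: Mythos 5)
Your proposal is correct and follows essentially the same route as the paper: both reduce to the implication $(2) \Rightarrow (3)$ via Lemmas~\ref{Lem:max-intersection} and~\ref{lem: convex-no-obs}, and both observe that the no-$2$-simplices hypothesis forces every max-intersection face to be a pairwise facet intersection not contained in any third facet, so that Lemma~\ref{lem:intersection-2-facets} makes it mandatory. The only cosmetic difference is that you argue directly while the paper phrases the same step as a proof by contradiction.
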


\begin{proof}
Let $\mathcal{F}(\C) = \{F_1,F_2,\hdots, F_m\}$ denote the set of maximal codewords of $\mathcal{C}$.  Our assumption that the nerve of $\mathcal{F}(\C)$ has no $2$-simplices, simply means that $F_i \cap F_j \cap F_k = \varnothing$, whenever $1 \leq i<j<k \leq m$.

 The implications $(3) \Rightarrow (1)$ and $(1) \Rightarrow (2)$ are
 Lemmas \ref{Lem:max-intersection} and \ref{lem: convex-no-obs}, respectively. Thus, it suffices to prove the implication $(2) \Rightarrow (3)$. 

 We proceed by contradiction.
 Suppose that $\mathcal{C}$ is not max-intersection-complete. Then there exists $\sigma \not\in \mathcal{C}$ such that $\sigma$ is a max-intersection face of $\C$. Thus, by definition, $\sigma$ is the intersection of two or more maximal codewords.  Recall, however, that every triple-wise intersection of maximal codewords is empty.  We conclude that $\sigma = F_i \cap F_j$ for some $1 \leq i<j \leq m$, and $\sigma \not\subseteq F_k$ for 
 all $k \in [m] \smallsetminus \{i,j\}$. 
 Thus, Lemma \ref{lem:intersection-2-facets} implies that $\link{\simp{\C}}{\sigma}$ is not contractible. By assumption, $\sigma \not\in \mathcal{C}$, so $\sigma$ is a mandatory face of $\C$, and therefore $\C$ has a local obstruction (by Definition~\ref{def:local-obs}(2)).
\end{proof}

Proposition~\ref{prop:notriangle} yields the following corollary.

\begin{cor} \label{cor:no-triangle}
    If $\C$ is a $4$-maximal code and the nerve of the set of maximal codewords is one of the following simplicial complexes:
    L9, L13, L14, L17, L19, L20, or L23, then $\C$ is convex if and only if $\C$ has no local obstructions.
\end{cor}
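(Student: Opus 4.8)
The plan is to recognize Corollary~\ref{cor:no-triangle} as nothing more than a specialization of Proposition~\ref{prop:notriangle}. That proposition already establishes, for \emph{any} code whose maximal-codeword nerve contains no $2$-simplices, the equivalence of convexity and the absence of local obstructions (these being two of the three equivalent conditions listed there). So the entire task reduces to verifying that each of the seven named nerves---L9, L13, L14, L17, L19, L20, L23---is purely $1$-dimensional, that is, built out of vertices and edges with no filled triangle.

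First I would read off the faces of each complex from Figure~\ref{fig:image-simplicial-complex}. The complex L9 is the discrete complex on four vertices (no edges at all); L13 is a path on the four facets; L14 is a three-pronged star centered at a single facet; L17 is a $3$-cycle with one pendant edge; L19 is a $4$-cycle; L20 is the complete graph $K_4$ with one edge deleted; and L23 is the full $1$-skeleton of the tetrahedron, namely $K_4$. In every case the facets of the nerve are edges (or, for L9, vertices), and no triple of vertices spans a filled $2$-simplex. Equivalently, in the language of the proof of Proposition~\ref{prop:notriangle}, $F_i \cap F_j \cap F_k = \varnothing$ for every triple of distinct maximal codewords $F_i, F_j, F_k$, which is exactly the hypothesis of that proposition.

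With this verification in hand, the conclusion is immediate: applying the equivalence of conditions (1) and (2) in Proposition~\ref{prop:notriangle} to a $4$-maximal code $\C$ whose nerve is any of L9, L13, L14, L17, L19, L20, or L23 yields precisely the asserted statement that $\C$ is convex if and only if $\C$ has no local obstructions.

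The only point requiring care---and the closest thing to an obstacle---is reading the figure correctly. Several of these complexes (L17, L20, L23) contain $3$-cycles drawn as \emph{hollow} triangles, and these must be distinguished from the shaded $2$-simplices appearing in, say, L16 or L21: a hollow triangle contributes its three boundary edges but \emph{not} the enclosed $2$-face, so it introduces no $2$-simplex. Likewise, the dashed segments in L20 and L23 denote edges that are genuinely present in the complex (drawn dashed only to suggest a hidden edge in a tetrahedral picture), not absent faces. Once these drawing conventions are applied consistently, it is clear that none of the seven complexes has a $2$-simplex, and the corollary follows.
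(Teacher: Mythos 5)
Your proposal is correct and is exactly the paper's argument: the paper derives Corollary~\ref{cor:no-triangle} directly from Proposition~\ref{prop:notriangle} (using the equivalence of conditions (1) and (2)), with the only content being the observation that each of L9, L13, L14, L17, L19, L20, and L23 consists solely of vertices and edges. Your identifications of the seven complexes, including the remark that hollow triangles and dashed edges in the figure contribute no $2$-simplices, are accurate.
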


\begin{remark}
    L9 codes are covered by both Proposition~\ref{prop:disconnected-nerve} and Corollary~\ref{cor:no-triangle}.
\end{remark}

\subsection{Preliminary results on nerves with one 2-simplex}
In the prior subsection, we analyzed codes for which the nerve has no $2$-simplices.  We now turn our attention to nerves with exactly one $2$-simplex.  The relevant nerves are L18, L21, and L24. (L8 and L16 also have only one $2$-simplex, but L8 is on only three vertices and L16 is disconnected, so Propositions~\ref{prop:3-max} and~\ref{prop:disconnected-nerve} apply).  

\begin{lemma}\label{lem:restrict-facet}
    Let $\C$ be a $4$-maximal neural code, and let
    $\mathcal{F}(\C)=\{F_1,F_2,F_3,F_4\}$ denote the set of maximal codewords.
    If $\C$ has no local obstructions and the nerve $\mathcal{N}(\mathcal{F}(\C))$ is L18, L21, or L24 (in particular, $F_1 \cap F_2 \cap F_3$ is the unique nonempty triplewise-intersection of maximal codewords),
        then both of the following sets are subsets of $\C$:
\[
\mincode^\ast ~: =~ \mincode(\simp{\{F_1,F_2,F_3\}})
\quad {\rm and}
\quad 
\C_{\cap F_4}
~:=~
\{F_4\} \cup 
\{F_1 \cap F_4\} \cup \{ F_2 \cap F_4
\} \cup \{F_3 \cap F_4\}~.
\]
\end{lemma}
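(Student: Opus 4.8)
The plan is to treat the two containments separately, in both cases using the hypothesis that $\C$ has no local obstructions, which by~\eqref{eq:c-min} is equivalent to the statement that every mandatory face of $\simp{\C}$ is a codeword of $\C$. Throughout, I would exploit the common defining feature of the three nerves L18, L21, L24: the only nonempty triplewise intersection of maximal codewords is $F_1 \cap F_2 \cap F_3$. Thus $F_i \cap F_j \cap F_k = \varnothing$ for every triple of distinct indices other than $\{1,2,3\}$, and in particular $F_1 \cap F_2 \cap F_3 \cap F_4 = \varnothing$.

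For the containment $\C_{\cap F_4} \subseteq \C$, I would first note that $F_4 \in \C$ as a maximal codeword and $\varnothing \in \C$ by Assumption~\ref{assumption:empty-set} (this handles the intersections $F_i \cap F_4$ that are empty). It then remains to show that each \emph{nonempty} intersection $\sigma := F_i \cap F_4$, for $i \in \{1,2,3\}$, lies in $\C$. The key observation is that such a $\sigma$ is contained in no third facet $F_k$: if $\sigma \subseteq F_k$ with $k \notin \{i,4\}$, then $\varnothing \neq \sigma \subseteq F_i \cap F_4 \cap F_k$ would make $\{F_i, F_4, F_k\}$ a $2$-simplex of the nerve, contradicting that $\{F_1, F_2, F_3\}$ is the unique $2$-simplex (which does not contain $F_4$). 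Hence $\sigma$ is the intersection of exactly two facets of $\simp{\C}$ and lies in no other facet, so Lemma~\ref{lem:intersection-2-facets} shows $\link{\simp{\C}}{\sigma}$ is not contractible. Thus $\sigma$ is mandatory and therefore a codeword of $\C$, giving $\C_{\cap F_4} \subseteq \C$.

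For the containment $\mincode^\ast \subseteq \C$, I would expand $\mincode^\ast$ via~\eqref{eq:c-min} as the union of the facets $F_1, F_2, F_3$, the non-facet mandatory faces of $\simp{\{F_1,F_2,F_3\}}$, and $\varnothing$. The facets are maximal codewords of $\C$ and $\varnothing \in \C$, so the crux is to show that each non-facet mandatory face $\sigma$ of $\simp{\{F_1,F_2,F_3\}}$ lies in $\C$. By Lemma~\ref{lem:intersection-facet}, applied to the three-facet complex $\simp{\{F_1,F_2,F_3\}}$, such a $\sigma$ is an intersection of two or more of $F_1, F_2, F_3$, hence one of $F_1 \cap F_2$, $F_1 \cap F_3$, $F_2 \cap F_3$, or $F_1 \cap F_2 \cap F_3$. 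The plan is to promote the mandatoriness of $\sigma$ from the small complex $\simp{\{F_1,F_2,F_3\}}$ to the full complex $\simp{\C}$, after which the absence of local obstructions in $\C$ forces $\sigma \in \C$.

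The main obstacle is exactly this promotion step, since a priori the extra facet $F_4$ could alter the link of $\sigma$; I would resolve it using the Nerve Lemma for links (Lemma~\ref{nerve}). As before, one checks $\sigma \not\subseteq F_4$: since $\sigma \neq \varnothing$, the containment $\sigma \subseteq F_4$ would force a $2$- or $3$-simplex of the nerve involving $F_4$, which does not exist. Consequently the facets of $\simp{\C}$ containing $\sigma$ are precisely those among $F_1, F_2, F_3$ that contain $\sigma$, so $\mathcal{L}_{\simp{\C}}(\sigma) = \mathcal{L}_{\simp{\{F_1,F_2,F_3\}}}(\sigma)$, and Lemma~\ref{nerve} gives $\link{\simp{\C}}{\sigma} \simeq \mathcal{N}(\mathcal{L}_{\simp{\C}}(\sigma)) = \mathcal{N}(\mathcal{L}_{\simp{\{F_1,F_2,F_3\}}}(\sigma)) \simeq \link{\simp{\{F_1,F_2,F_3\}}}{\sigma}$. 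In particular one link is contractible if and only if the other is, so $\sigma$ (being a nonempty intersection of facets) is mandatory in $\simp{\C}$; since $\C$ has no local obstructions, $\sigma \in \C$. This establishes $\mincode^\ast \subseteq \C$ and completes the argument.
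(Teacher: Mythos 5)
Your proposal is correct and follows essentially the same route as the paper's proof: both containments are handled by the same decomposition via~\eqref{eq:c-min}, the same use of Lemma~\ref{lem:intersection-2-facets} for the pairwise intersections with $F_4$, and the same key step of showing the relevant face is not contained in $F_4$ so that Lemma~\ref{nerve} transfers non-contractibility of the link from $\simp{\{F_1,F_2,F_3\}}$ to $\simp{\C}$. No gaps.
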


\begin{proof}
We first show that $\mincode^\ast  \subseteq \C$.  We begin by writing the following, using~\eqref{eq:c-min}:
\begin{align*}
    \mincode^\ast ~=~ \{F_1,F_2,F_3\} \cup \{ \text{non-facet mandatory faces of }\C^*\} \cup \{\varnothing\}~,
\end{align*}
where $\C^*:=\{F_1,F_2,F_3\}$.  By hypothesis and Assumption~\ref{assumption:empty-set}, $\{\varnothing, F_1, F_2, F_3\} \subseteq \C$.  Therefore, we need only consider non-facet mandatory faces $\gamma$ of $\C^*$.  Accordingly, assume that $\gamma$ is such a face; in other words (by Lemma~\ref{lem:intersection-facet}), $\varnothing \neq \gamma = F_i \cap F_j \cap F_k$ for some $i,j,k \in \{1,2,3\}$ with $i \neq j$ (but $i=k$ is possible) such that $\link{\simp{\C^*}}{\gamma}$ is not contractible.
Our aim is to show that $\gamma \in \C$.

We claim that $\gamma \nsubseteq F_4$. Indeed, this follows from (1) the fact that $\gamma$ is the intersection of two or more of the facets $F_1,F_2,F_3$; and (2)~$F_1\cap F_2 \cap F_3$ is the only nonempty intersection of three facets of $\Delta(\C)$.  We conclude that the set of facets of $\Delta(\C)$ that contain~$\gamma$ is the same as the set of facets of $\Delta(\C^*)$ that contain~$\gamma$. 
Hence, using the notation in Lemma \ref{nerve}, we have $ \mathcal{L}_{\simp{\C}}(\gamma)= \mathcal{L}_{\simp{\C^*}}(\gamma)$.  Thus, Lemma~\ref{nerve} yields the following:
\[
\link{\simp{\C}}{\gamma} ~\simeq~
\mathcal{N}(\mathcal{L}_{\simp{\C}}(\gamma)) ~= ~
\mathcal{N}(\mathcal{L}_{\simp{\C^*}}(\gamma))
~\simeq~
\link{\simp{\C^*}}{\gamma}~.
\]
We conclude that $\link{\simp{\C}}{\gamma}$ is not contractible
(because 
$\link{\simp{\C^*}}{\gamma}$ is not contractible).  Therefore, $\gamma$ is a mandatory face of $\C$ and so (since $\C$ has no local obstructions) $\gamma \in \C$, as desired.

Now we show that $\C_{\cap F_4} \subseteq \C$. Let $i \in \{1,2,3\}$. If $F_i \cap F_4 = \varnothing$, then $F_i \cap F_4 \in \C$ (recall Assumption~\ref{assumption:empty-set}). Now assume $F_i \cap F_4 \neq \varnothing$. As $\mathcal{N}(\mathcal{F}(\C))$ is L18, L21, or L24, it follows that for $k\in\{1,2,3\} \smallsetminus \{i\}$, we have $F_i \cap F_4 \not\subseteq F_k$. Now it follows from Lemma \ref{lem:intersection-2-facets} that $\link{\simp{\C}}{F_i \cap F_4}$ is not contractible, and therefore $F_i \cap F_4$ is a mandatory face of $\C$. Since $\C$ has no local obstructions, we have $F_i \cap F_4 \in \C$. Finally, 
$F_4$ is a maximal codeword of $\C$, so $F_4 \ cin \C$. We conclude that $\C_{\cap F_4} \subseteq \C$.
\end{proof}

\begin{prop} \label{prop: pathoffaces-convex}
    Let $\C$ be a $4$-maximal neural code, and let
    $\mathcal{F}(\C)=\{F_1,F_2,F_3,F_4\}$ denote the set of maximal codewords.
    If the following hold:
    \begin{enumerate}
        \item $\C$ has no local obstructions,
        \item the nerve $\mathcal{N}(\mathcal{F}(\C))$ is L18, L21, or L24 (in particular, $F_1 \cap F_2 \cap F_3$ is the unique nonempty triplewise-intersection of maximal codewords), and
        \item $\Delta(\{F_1,F_2,F_3\})$ does \uline{not} satisfy the Path-of-Facets Condition;
    \end{enumerate}
    then $\C$ is max-intersection-complete and thus convex. 
\end{prop}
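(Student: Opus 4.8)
The plan is to show that under hypotheses (1)--(3) the code $\C$ is max-intersection-complete; convexity then follows immediately from Lemma~\ref{Lem:max-intersection}. The first step is to enumerate the max-intersection faces of $\C$, that is, the nonempty intersections of two or more of the facets $F_1,F_2,F_3,F_4$ of $\simp{\C}$. Because the nerve is L18, L21, or L24, the only nonempty triplewise intersection is $F_1 \cap F_2 \cap F_3$; in particular every intersection of four facets is empty, since $F_1 \cap F_2 \cap F_3 \cap F_4 \subseteq F_1 \cap F_2 \cap F_4 = \varnothing$. Hence the max-intersection faces are exactly the nonempty sets among $F_1 \cap F_2$, $F_1 \cap F_3$, $F_2 \cap F_3$, $F_1 \cap F_2 \cap F_3$, $F_1 \cap F_4$, $F_2 \cap F_4$, and $F_3 \cap F_4$, and it suffices to show each of these seven sets lies in $\C$.

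The three faces $F_i \cap F_4$ (for $i \in \{1,2,3\}$) are handled immediately: Lemma~\ref{lem:restrict-facet} gives $\C_{\cap F_4} \subseteq \C$, and $\C_{\cap F_4}$ contains each $F_i \cap F_4$ (reading $F_i \cap F_4 = \varnothing$ as the empty codeword when the edge $\{F_i, F_4\}$ is absent from the nerve). For the remaining faces I would use the other containment furnished by Lemma~\ref{lem:restrict-facet}, namely $\mincode^\ast \subseteq \C$ with $\mincode^\ast = \mincode(\simp{\{F_1,F_2,F_3\}})$. The crux is to verify that $\mincode^\ast$ already contains $F_1 \cap F_2 \cap F_3$ and the three pairwise intersections $F_i \cap F_j$ with $i,j \in \{1,2,3\}$. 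For the triple intersection, observe first that $F_1 \cap F_2 \cap F_3 \not\subseteq F_4$ (since $F_1 \cap F_2 \cap F_3 \cap F_4 = \varnothing$ while $F_1 \cap F_2 \cap F_3 \neq \varnothing$), so the facets of $\simp{\C}$ containing it are precisely $F_1, F_2, F_3$, whence its link in $\simp{\{F_1,F_2,F_3\}}$ agrees with its link in $\simp{\C}$ via the Nerve-Lemma argument of Lemma~\ref{nerve}; since hypothesis (3) says the Path-of-Facets Condition fails, Lemma~\ref{lem:triple}(a) forces this link to be non-contractible, so $F_1 \cap F_2 \cap F_3$ is a non-facet mandatory face of $\simp{\{F_1,F_2,F_3\}}$ and hence belongs to $\mincode^\ast$.

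For each pairwise intersection $F_i \cap F_j$ with $\{i,j\} \subset \{1,2,3\}$ and third index $k$, I would split on whether $(F_i \cap F_j) \smallsetminus F_k$ is empty. If it is nonempty, then $F_i \cap F_j$ is the intersection of two facets of $\simp{\{F_1,F_2,F_3\}}$ not contained in the remaining facet $F_k$, so Lemma~\ref{lem:intersection-2-facets} makes it mandatory there and hence a member of $\mincode^\ast$. If instead $(F_i \cap F_j) \smallsetminus F_k = \varnothing$, then $F_i \cap F_j \subseteq F_k$ gives $F_i \cap F_j = F_1 \cap F_2 \cap F_3$, which we have just placed in $\mincode^\ast$. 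Either way $F_i \cap F_j \in \mincode^\ast \subseteq \C$. Combining with the previous paragraph, all seven max-intersection faces lie in $\C$, so $\C$ is max-intersection-complete and Lemma~\ref{Lem:max-intersection} yields convexity.

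The step I expect to be the main obstacle is verifying that $\mincode^\ast$ genuinely captures the triple intersection $F_1 \cap F_2 \cap F_3$: this is exactly where hypothesis (3) is indispensable, because if the Path-of-Facets Condition held instead, the link of $F_1 \cap F_2 \cap F_3$ would be contractible and the triple would not be forced into $\C$ (as the convex-but-not-max-intersection-complete code $\C_{22}$ illustrates in the neighboring L22 case). A minor point to check along the way is that $F_1, F_2, F_3$ really are the facets of $\simp{\{F_1,F_2,F_3\}}$, which holds because they are pairwise incomparable as distinct maximal codewords of $\C$, so that the only competing facet in the application of Lemma~\ref{lem:intersection-2-facets} is $F_k$.
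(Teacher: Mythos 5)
Your proposal is correct and follows essentially the same route as the paper's proof: it invokes Lemma~\ref{lem:restrict-facet} to get $\mincode^\ast \cup \C_{\cap F_4} \subseteq \C$, enumerates the same seven candidate max-intersection faces, handles the triple intersection via the failure of the Path-of-Facets Condition and Lemma~\ref{lem:triple}(a), and handles the pairwise intersections via Lemma~\ref{lem:intersection-2-facets}. The only (immaterial) difference is that for the pairwise intersections $F_i \cap F_j$ with $i,j \in \{1,2,3\}$ you work inside $\Delta(\{F_1,F_2,F_3\})$ and route membership through $\mincode^\ast$, whereas the paper applies Lemma~\ref{lem:intersection-2-facets} directly in $\Delta(\C)$ (checking additionally that $F_i \cap F_j \not\subseteq F_4$) and then uses the no-local-obstructions hypothesis.
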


\begin{proof}
Define $\C^\ast = \{F_1,F_2,F_3\}$.
    By Lemma \ref{lem:restrict-facet}, both $\mincode^\ast = \mincode(\Delta(\C^\ast))$ and 
    $\C_{\cap F_4} = \{F_1 \cap F_4, F_2 \cap F_4, F_3 \cap F_4, F_4\}$ are subsets of the code $\C$. 
    
    We must show that every nonempty max-intersection face $\gamma$ of $\C$ is a codeword of $\C$.  
    Each such face $\gamma$ must come from the following list, because the nerve $\mathcal{N}(\mathcal{F}(\C))$ is L18, L21, or L24 (in Figure~\ref{fig:image-simplicial-complex}): 
    \begin{align} \label{eq:intersections-proof}
    F_1 \cap F_2 \cap F_3~, \quad 
        &F_1 \cap F_2~, \quad F_1 \cap F_3~, \quad 
    F_2 \cap F_3~,\\
    &F_1 \cap F_4~, \quad F_2 \cap F_4~, \quad F_3 \cap F_4~. \notag
    \end{align}    
    The intersections shown in the second line of~\eqref{eq:intersections-proof} were already seen to be codewords of $\C$, as $\C_{\cap F_4} \subseteq \C$.  
    
    Next, we consider the max-intersection face $\gamma=F_1 \cap F_2 \cap F_3$.  By hypothesis, $\Delta(\C^\ast)$ does not satisfy the Path-of-Facets Condition, so  Lemma~\ref{lem:triple}(a) implies that the link $\link{\Delta(\C^\ast)}{F_1 \cap F_2 \cap F_3}$ is not contractible. Therefore, by definition, $F_1 \cap F_2 \cap F_3$ is a mandatory face of  $\Delta(\C^\ast)$ and so 
    $F_1 \cap F_2 \cap F_3 \in \mincode^\ast \subseteq \C$. 

    Finally, we consider the case when $\gamma$ is nonempty and is one of the following intersections: $F_1 \cap F_2$, $ F_1 \cap F_3$, or
    $F_2 \cap F_3$.  If 
    this pairwise intersection equals the triplewise intersection 
    $F_1 \cap F_2 \cap F_3$, then (as shown above)  $\gamma=F_1 \cap F_2 \cap F_3 \in \C$.  
    So, assume $ \gamma = F_i \cap F_j \not\subseteq F_k$, where $(i,j,k)$ is a permutation of $(1,2,3)$.  We also know that $\gamma = F_i \cap F_j \not\subseteq F_4$, because $F_i \cap F_j \cap F_4 = \varnothing$.   
    Thus, by Lemma \ref{lem:intersection-2-facets}, $\link{\Delta(\C)}{\gamma}$ is not contractible.  We conclude that $\gamma$ is a mandatory face of $\C$, and therefore (since $\C$ has no local obstructions) $\gamma \in \C$. 
    
    We conclude that $\C$ is max-intersection-complete and thus (by Lemma~\ref{Lem:max-intersection})  convex.
\end{proof}

\subsection{Convexity of L18 and L21 codes}
    The main result of this subsection is the following.
    
    \begin{thm} \label{thm: l18-l21}
    Let $\C$ be a $4$-maximal code, and let
    $\mathcal{F}(\C)=\{F_1,F_2,F_3,F_4\}$ denote the set of maximal codewords.
    If $\mathcal{N}(\mathcal{F}(\C))$ is L18 or L21, then $\C$ is convex if and only if it has no local obstructions.
\end{thm}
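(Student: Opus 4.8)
The forward direction is immediate: a convex code has no local obstructions by Lemma~\ref{lem: convex-no-obs}. For the converse, I would assume $\C$ has no local obstructions and write $\mcF(\C) = \{F_1, F_2, F_3, F_4\}$, with $F_1 \cap F_2 \cap F_3$ the unique nonempty triplewise intersection. The plan is to split on whether $\simp{\{F_1,F_2,F_3\}}$ satisfies the Path-of-Facets Condition (Definition~\ref{def:path-facets}). If it does \emph{not}, then Proposition~\ref{prop: pathoffaces-convex} already gives that $\C$ is max-intersection-complete and hence convex, and there is nothing more to do. So the substance of the proof is the remaining case, in which $\simp{\{F_1,F_2,F_3\}}$ \emph{does} satisfy the Path-of-Facets Condition.

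In that case the code need not be max-intersection-complete: by Lemma~\ref{lem:triple}(a) the triple $F_1 \cap F_2 \cap F_3$ has contractible link, so it is not mandatory and may be absent from $\C$, whence Lemma~\ref{Lem:max-intersection} no longer applies. I would instead build an explicit convex realization and finish via monotonicity. By Lemma~\ref{lem:restrict-facet}, both $\mincode^\ast = \mincode(\simp{\{F_1,F_2,F_3\}})$ and the set $\{F_4,\, F_1 \cap F_4,\, F_2 \cap F_4,\, F_3 \cap F_4\}$ lie in $\C$. By Lemma~\ref{lem:triple}(b), $\mincode^\ast$ admits an explicit realization as a chain of intervals with successive codewords $F_a$, $F_a \cap F_b$, $F_b$, $F_b \cap F_c$, $F_c$, where $(a,b,c)$ is the permutation of $(1,2,3)$ supplied by the Path-of-Facets Condition. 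I would extend this chain into $\R^2$ (as axis-parallel boxes) and then attach a convex open set realizing the facet $F_4$ together with its intersections.

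For L18, where $F_4$ meets only $F_2$ (so $F_1 \cap F_4 = F_3 \cap F_4 = \varnothing$), I would erect a ``tower'' over the pure-$F_2$ portion of the chain, passing through the codewords $F_2$, then $F_2 \cap F_4$, then $F_4$; being localized over $F_2$, it meets no other facet, and this is routine. The main obstacle is L21, where $F_4$ meets \emph{both} $F_2$ and $F_3$ while $F_1 \cap F_4 = \varnothing$ and $F_2 \cap F_3 \cap F_4 = \varnothing$: a single convex set $U_4$ must reach the $F_2$-region and the $F_3$-region, avoid the $F_1$-region, and avoid the $F_2 \cap F_3$ overlap. Here I would subdivide according to the positions of $F_2$ and $F_3$ in the chain $F_a$--$F_b$--$F_c$: if they sit at adjacent positions, $F_4$ can bridge two consecutive regions, while if they are the two endpoints I would use the second dimension, making the two end-regions ``tall'' and the middle region ``short'' so that one convex slab overlaps both ends yet passes above the middle, keeping $U_4$ and every shared neuron's set convex. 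In each configuration the realization generates a code $\C_0$ whose codewords are the facets, the chain intersections $F_a \cap F_b$ and $F_b \cap F_c$, and the intersections $F_i \cap F_4$, all of which lie in $\mincode^\ast \cup \{F_4,\, F_2\cap F_4,\, F_3 \cap F_4\} \cup \{\varnothing\} \subseteq \C$. Hence $\C_0 \subseteq \C$; since $\C_0$ contains all four facets, $\simp{\C_0} = \simp{\C}$, giving $\C_0 \subseteq \C \subseteq \simp{\C_0}$. As $\C_0$ is convex, Lemma~\ref{lem:monotonicity} yields that $\C$ is convex, completing the proof.
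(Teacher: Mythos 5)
Your proposal is correct and follows essentially the same route as the paper: the same split on the Path-of-Facets Condition (with Proposition~\ref{prop: pathoffaces-convex} disposing of the failing case), the same use of Lemma~\ref{lem:restrict-facet} to locate $\mincode^\ast$ and $\C_{\cap F_4}$ inside $\C$, and the same glue-then-apply-monotonicity finish, with your ``tall ends, short middle'' construction for L21 matching the paper's $b=1$ realization. One caution on the L21 subcase where $F_2$ and $F_3$ occupy adjacent chain positions: their atoms are still separated by the $F_2\cap F_3$ atom, which $U_4$ must avoid since $F_2\cap F_3\cap F_4=\varnothing$, so a literal ``bridge of two consecutive regions'' does not exist and you need the same pass-over-the-middle trick in $\R^2$ here as well (this is exactly what the paper's $b=2$ realization for L21 does).
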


One direction of Theorem~\ref{thm: l18-l21} holds in general (Lemma~\ref{lem: convex-no-obs} states that convex codes have no local obstructions).  As for the converse, one case  was proven already; specifically, Proposition~\ref{prop: pathoffaces-convex} handles the codes in which the simplicial complex $\Delta(\{F_1,F_2,F_3\})$ does not satisfy the Path-of-Facets Condition.  The remainder of this subsection therefore concerns the cases when the Path-of-Facets Condition holds; we prove this separately for the nerves L18 (Lemma~\ref{lem:L18} below) and L21 (Lemma~\ref{L21}).  
    
The idea behind these proofs is to construct a convex realization of the minimal code  of $\Delta(\C)$ by ``gluing'' together the convex realizations of the subcodes $\mincode^\ast$ and $\C_{\cap F_4}$ defined in Lemma~\ref{lem:restrict-facet}.  
We illustrate such a ``gluing'' in the following example involving L18 codes.

\begin{ex}[L18 codes] \label{ex:L18}
 Consider the following $4$-maximal codes: 
    \begin{equation}
    \label{eq:L18-codes-example}        
    \C_{18}^a = 
    \{\mathbf{345}, \mathbf{234}, \mathbf{356}, \mathbf{12}, 34, 35, 2, \varnothing \} \hspace{15pt} \text{ and } \hspace{15pt} 
    \C_{18}^b = 
    \{\mathbf{123}, \mathbf{1346}, \mathbf{145}, \mathbf{67}, 13, 14, 6, \varnothing \}~. 
    \end{equation}
    It is straightforward to check that both $\C_{18}^a$ and $\C_{18}^b$ are minimal L18 codes, with maximal codewords $F_1,F_2,F_3,F_4$ as listed in the orders shown in~\eqref{eq:L18-codes-example}.  Next, following the notation of Lemma~\ref{lem:restrict-facet}, we have:
    \begin{align}
    \notag
    (\C_{18}^a)_{\cap F_4} ~&=~
     \{\mathbf{12}, 2, \varnothing \}   ~, 
    &
    \quad 
        (\C_{18}^a)^*_{\rm min} ~&=~
    \{\mathbf{345}, \mathbf{234}, \mathbf{356}, 34, 35, \varnothing \}
    ~,
    \\
    \notag
    (\C_{18}^b)_{\cap F_4} ~&=~
    \{\mathbf{67},  6, \varnothing \} ~,
    &
    \quad 
        (\C_{18}^b)^*_{\rm min} ~&=~
    \{\mathbf{123}, \mathbf{1346}, \mathbf{145},  13, 14, \varnothing \}~.
    \end{align}
Convex realizations of $(\C_{18}^a)_{\cap F_4}$ and $ (\C_{18}^a)^*_{\rm min}$ are shown below (the latter realization coming from Figure~\ref{fig:realization-1-dim}):

        \begin{center}
        \begin{tikzpicture}
        \filldraw[black] (2,0) circle (2pt);
        \filldraw[black] (0,0) circle (2pt);
        \filldraw[black] (-2,0) circle (2pt);
        \filldraw[black] (-4,0) circle (2pt);
        \filldraw[black] (-6,0) circle (2pt);
        \filldraw[black] (4,0) circle (2pt);

        \draw[thick, black] (-6,0)--(-4,0) node [midway, fill=white] {$\mathbf{234}$};
        \draw[thick, black] (-4,0)--(-2,0) node [midway, fill=white] {$34$};
        \draw[thick, black] (-2,0)--(0,0) node [midway, fill=white] {$\mathbf{345}$};
        \draw[thick, black] (0,0)--(2,0) node [midway, fill=white] {$35$};
        \draw[thick, black] (2,0)--(4,0) node [midway, fill=white] {$\mathbf{356}$};

        \filldraw[black] (-12,0) circle (2pt);
        \filldraw[black] (-10,0) circle (2pt);
        \filldraw[black] (-8,0) circle (2pt);

        \draw[thick, black] (-12,0)--(-10,0) node [midway, fill=white] {$\mathbf{12}$};    
        \draw[thick, black] (-10,0)--(-8,0) node [midway, fill=white] {$2$};
    \end{tikzpicture}    
    \end{center}
    The above two realizations can be ``glued'' to obtain the following convex realization of $\C_{18}^a$:
        \begin{center}
        \begin{tikzpicture}
        \filldraw[black] (2,0) circle (2pt);
        \filldraw[black] (0,0) circle (2pt);
        \filldraw[black] (-2,0) circle (2pt);
        \filldraw[black] (-4,0) circle (2pt);
        \filldraw[black] (-6,0) circle (2pt);
        \filldraw[black] (4,0) circle (2pt);

        \draw[thick, black] (-6,0)--(-4,0) node [midway, fill=white] {$\mathbf{234}$};
        \draw[thick, black] (-4,0)--(-2,0) node [midway, fill=white] {$34$};
        \draw[thick, black] (-2,0)--(0,0) node [midway, fill=white] {$\mathbf{345}$};
        \draw[thick, black] (0,0)--(2,0) node [midway, fill=white] {$35$};
        \draw[thick, black] (2,0)--(4,0) node [midway, fill=white] {$\mathbf{356}$};

        \filldraw[black] (-10,0) circle (2pt);
        \filldraw[black] (-8,0) circle (2pt);

        \draw[thick, black] (-10,0)--(-8,0) node [midway, fill=white] {$\mathbf{12}$};    
        \draw[thick, black] (-8,0)--(-6,0) node [midway, fill=white] {$2$};
        \end{tikzpicture}    
            \end{center}
Next, convex realizations of $(\C_{18}^b)_{\cap F_4}$ and $ (\C_{18}^b)^*_{\rm min}$ are depicted below: 

    \begin{center}
    \begin{tikzpicture}[scale = 1.2]
    \draw (0,0) rectangle (7,1);

    \draw (1,0) -- (1,1);
    \draw (3,0) -- (3,1);
    \draw (4,0) -- (4,1);
    \draw (6,0) -- (6,1);

    \node at (.5,.5) {$\mathbf{123}$};
    \node at (2,.5) {$13$};
    \node at (3.5,.5) {$\mathbf{1346}$};
    \node at (5,.5) {$14$};
    \node at (6.5,.5) {$\mathbf{145}$};
    %
    \draw (-1,0) rectangle (-3,1);

    \draw (-2,0) -- (-2,1);

    \node at (-2.5, .5) {$\mathbf{67}$};
    \node at (-1.5, .5) {$6$};

    \end{tikzpicture}
    \end{center}

We ``glue'' the above two realizations to obtain a T-shaped convex realization of $\C_{18}^b$, as follows:

    \begin{center}
    \begin{tikzpicture}[scale = 1.2]
    \draw (0,0) rectangle (7,1);

    \draw (1,0) -- (1,1);
    \draw (3,0) -- (3,1);
    \draw (4,0) -- (4,1);
    \draw (6,0) -- (6,1);

    \node at (.5,.5) {$\mathbf{123}$};
    \node at (2,.5) {$13$};
    \node at (3.5,.5) {$\mathbf{1346}$};
    \node at (5,.5) {$14$};
    \node at (6.5,.5) {$\mathbf{145}$};
    %
    \draw (3,0) rectangle (4,-1.5);

    \draw (3,-.75) -- (4,-.75);

    \node at (3.5, -1.15) {$\mathbf{67}$};
    \node at (3.5, -0.35) {$6$};

    \end{tikzpicture}
    \end{center}
\end{ex}

The realizations shown in Example~\ref{ex:L18} for $\C_{18}^a$ and $\C_{18}^b$ represent two of the cases in the proof of Lemma~\ref{lem:L18} below. In particular, the two realizations generalize to the realizations shown later in Figures~\ref{L18CL} and~\ref{fig:L18CT-case-2}, respectively.

    \begin{lemma}[L18] \label{lem:L18}
    Let $\C$ be a $4$-maximal neural code, and let
    $\mathcal{F}(\C)=\{F_1,F_2,F_3,F_4\}$ denote the set of maximal codewords.
    If the following hold:
    \begin{enumerate}
        \item $\C$ has no local obstructions,
        \item the nerve $\mathcal{N}(\mathcal{F}(\C))$ is L18 (in particular, $F_1 \cap F_2 \cap F_3$ is the unique nonempty triplewise-intersection of maximal codewords), and
        \item $\Delta(\{F_1,F_2,F_3\})$ satisfies the Path-of-Facets Condition;
    \end{enumerate}
    then $\C$ is convex. 
    \end{lemma}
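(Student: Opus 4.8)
The plan is to reduce the whole statement to producing a single explicit convex realization of the minimal code $\mincode(\simp{\C})$, and then to invoke monotonicity. Since $\C$ has no local obstructions we have $\mincode(\simp{\C}) \subseteq \C \subseteq \simp{\C} = \simp{\mincode(\simp{\C})}$ (the last equality because $\mincode(\simp{\C})$ contains all facets of $\simp{\C}$), so Lemma~\ref{lem:monotonicity} reduces the lemma to showing that $\mincode(\simp{\C})$ is convex. The first step is therefore to pin down $\mincode(\simp{\C})$ exactly. By Lemma~\ref{lem:restrict-facet} I already know $\mincode^\ast \cup \C_{\cap F_4} \subseteq \C$; I would then determine the minimal code at the level of facet intersections. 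The L18 hypothesis forces $F_1 \cap F_4 = F_3 \cap F_4 = \varnothing$ and $F_2 \cap F_4 \neq \varnothing$, so (using Lemma~\ref{lem:intersection-facet}) the only candidate non-facet mandatory faces are the three pairwise intersections of $F_1,F_2,F_3$, the triple intersection $F_1\cap F_2\cap F_3$, and $F_2\cap F_4$. The Path-of-Facets hypothesis together with Lemma~\ref{lem:triple}(a) eliminates the triple intersection (its link is contractible, and this persists in $\simp{\C}$ because the triple is not contained in $F_4$, by the Nerve-Lemma argument used in the proof of Lemma~\ref{lem:restrict-facet}); it also identifies one triangle pairwise intersection with the triple and leaves the other two strict. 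Every pairwise intersection of $F_1,F_2,F_3$ involves $F_1$ or $F_3$ and hence meets $F_4$ trivially, and likewise $F_2\cap F_4$ lies in no facet besides $F_2$ and $F_4$; so Lemma~\ref{lem:intersection-2-facets} makes the two strict triangle intersections and $F_2\cap F_4$ mandatory. This yields
\[
\mincode(\simp{\C}) ~=~ \mincode^\ast \cup \C_{\cap F_4} ~=~ \{F_1,F_2,F_3,F_4,\; F_a\cap F_b,\; F_b\cap F_c,\; F_2\cap F_4,\; \varnothing\},
\]
where $(a,b,c)$ is the permutation of $(1,2,3)$ with $(F_a\cap F_c)\smallsetminus F_b = \varnothing$.

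Next I would build a realization of this code by gluing the one-dimensional realization of $\mincode^\ast$ from Figure~\ref{fig:realization-1-dim} (furnished by Lemma~\ref{lem:triple}(b)) to the trivial realization of $\C_{\cap F_4}=\{F_4, F_2\cap F_4, \varnothing\}$. The decisive structural fact, which I would isolate at the outset, is that because $F_4$ meets only $F_2$, each neuron of $F_4$ lies either in $F_4\smallsetminus F_2$ (and then in no other facet) or in $F_2\cap F_4$ (and then in neither $F_1$ nor $F_3$, since $F_1\cap F_4=F_3\cap F_4=\varnothing$); hence the $F_4$-neurons touch the path realization only through the block labeled $F_2$. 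The gluing then splits into two cases according to the position of $F_2$ among the ordered facets $F_a,F_b,F_c$. If $F_2$ is an endpoint (that is, $F_2\in\{F_a,F_c\}$), I append the pair of blocks $F_2\cap F_4$ and $F_4$ collinearly beyond the $F_2$ end of the interval realization, giving the one-dimensional realization of Figure~\ref{L18CL}. If instead $F_2$ is the middle facet $F_b$, I thicken the path into horizontal axis-parallel boxes and attach, directly below the $F_2$ box, a vertical stack consisting of the $F_2\cap F_4$ box and then the $F_4$ box, producing the T-shaped planar realization of Figure~\ref{fig:L18CT-case-2}.

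Finally I would verify in each case that the construction is a genuine convex realization of $\mincode(\simp{\C})$: every $U_i$ is an interval (endpoint case) or an axis-parallel box (middle case), and the atoms produced are exactly the listed codewords. The verification rests on the structural fact above: a neuron of $F_2\cap F_4$ occupies only the $F_2$ block of the path, so appending or stacking the $F_4$-blocks keeps the blocks containing it consecutive; a neuron of $F_4\smallsetminus F_2$ occupies only the terminal $F_4$ block; and the neurons of $F_1\cup F_2\cup F_3$ behave exactly as in the convex path realization and acquire no new block. I expect the main obstacle to be precisely this bookkeeping in the middle case: arranging the boxes so that every shared neuron of $F_2\cap F_4$ receives a convex (rectangular) region spanning the horizontal $F_2$ box and the vertical stack, while the neurons of $F_2\smallsetminus F_4$ stay confined to the horizontal strip and no spurious codeword is created at the T-junction. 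Depending on the fine structure of $F_2\cap F_4$ relative to $F_2$, this may require splitting the middle case into one or two box configurations.
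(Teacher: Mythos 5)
Your proposal is correct and follows essentially the same route as the paper's proof: reduce via monotonicity to the subcode $\mincode^\ast \cup \C_{\cap F_4}$ (which the paper glues directly, while you additionally identify it as the full minimal code of $\simp{\C}$), then realize it by attaching the $F_4$ and $F_2\cap F_4$ blocks to the Path-of-Facets realization, splitting into the collinear case when $F_2$ is an endpoint of the path and the T-shaped planar case when $F_2$ is the middle facet. The verification via the observation that $F_4$-neurons touch the path only through the $F_2$ block is exactly the paper's argument.
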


    \begin{proof}
Following the notation in Lemma~\ref{lem:restrict-facet}, we define the following, 
where $\C^\ast := \{F_1, F_2, F_3\}$: 
    \[
    \mincode^\ast ~:=~ \mincode(\Delta(\C^\ast))
    \quad \mathrm{and}
    \quad 
    \C_{\cap F_4} 
    ~:=~
    \{F_4\} \cup 
    \{F_1 \cap F_4\} \cup \{ F_2 \cap F_4 \} \cup \{F_3 \cap F_4\}
        ~=~ \{\varnothing, F_2 \cap F_4, F_4\}~,
    \]
    where we used the fact that 
    the nerve $\mathcal{N}(\mathcal{F}(\C))$ is L18 to conclude that $F_1 \cap F_4=F_3 \cap F_4 = \varnothing$ and $F_2 \cap F_4 \neq \varnothing$.    
    (Also, $F_2 \cap F_4 \neq F_4$ follows from the fact that $F_2$ and $F_4$ are both maximal codewords.)

        We claim that 
    $\mincode^\ast \cup \C_{\cap F_4} \subseteq \C$. 
    Indeed, this follows from Lemma~\ref{lem:restrict-facet}, which applies here because $\C$ has no local obstructions and is an L18 code.
    
    Next, 
    by construction, $\mincode^\ast \cup \C_{\cap F_4}$ has the same maximal codewords as $\C$, so $ \C \subseteq \Delta(\mincode^\ast \cup \C_{\cap F_4})$.  We conclude that     
    $\mincode^\ast\cup \C_{\cap F_4} \subseteq \C \subseteq \Delta(\mincode^\ast \cup \C_{\cap F_4})$ and so, by Lemma \ref{lem:monotonicity}, it suffices to prove that the code $\mincode^\ast\cup \C_{\cap F_4} $ is convex.  The remainder of this proof is dedicated to doing so.

It is straightforward to check that 
a convex realization of $\C_{\cap F_4}$ in $\mathbb{R}$ is depicted below:

        \begin{center}
        \begin{tikzpicture}
        \filldraw[black] (-12,0) circle (2pt);
        \filldraw[black] (-10,0) circle (2pt);
        \filldraw[black] (-8,0) circle (2pt);

        \draw[thick, black] (-12,0)--(-10,0) node [midway, fill=white] {$\mathbf{F_4}$};    
        \draw[thick, black] (-10,0)--(-8,0) node [midway, fill=white] {$F_2\cap F_4$};
    \end{tikzpicture}    
    \end{center}

Next, 
    since $\Delta(\C)$ satisfies the Path-of-Facets Condition, 
Lemma~\ref{lem:path}(b) gives 
a convex realization of $\mincode^\ast$ in $\mathbb{R}$ (in Figure \ref{fig:realization-1-dim}).  For convenience, we depict this realization again below, recalling that $(a,b,c)$ is a permutation of $(1,2,3)$  
     such that $(F_a \cap F_b) \smallsetminus F_c \neq \varnothing$,  $(F_b \cap F_c)\smallsetminus F_a \neq \varnothing$, and $(F_a \cap F_c) \smallsetminus F_b = \varnothing$:

\begin{center}
        \begin{tikzpicture}
        \filldraw[black] (2,0) circle (2pt);
        \filldraw[black] (0,0) circle (2pt);
        \filldraw[black] (-2,0) circle (2pt);
        \filldraw[black] (-4,0) circle (2pt);
        \filldraw[black] (-6,0) circle (2pt);
        \filldraw[black] (4,0) circle (2pt);

        \draw[thick, black] (-6,0)--(-4,0) node [midway, fill=white] {$\mathbf{F_a}$};
        \draw[thick, black] (-4,0)--(-2,0) node [midway, fill=white] {$F_a \cap F_b$};
        \draw[thick, black] (-2,0)--(0,0) node [midway, fill=white] {$\mathbf{F_b}$};
        \draw[thick, black] (0,0)--(2,0) node [midway, fill=white] {$F_b \cap F_c$};
        \draw[thick, black] (2,0)--(4,0) node [midway, fill=white] {$\mathbf{F_c}$};
    \end{tikzpicture}       
\end{center}    

We consider cases based on whether $a=2$, $b=2$, or $c=2$ (examples of the first two cases appeared in Example~\ref{ex:L18}).

{\bf Case 1: $a=2$.}
In this case, we claim that the realizations of $\mincode^\ast$ and $\C_{\cap F_4}$ can be glued as shown in Figure~\ref{L18CL} to obtain a convex realization of $\mincode^\ast \cup \C_{\cap F_{4}}$.  
We see this as follows.  Let $n$ denote the number of neurons of $\C$.  For $i\in [n] \smallsetminus F_4$, the open set (open interval) $U_i$ is unchanged from the realization shown earlier for $\mincode^\ast$ (indeed, the fact that $i \notin F_4$ implies that $i$ is not in the two ``added'' codewords $F_4$ and $F_2 \cap F_4$).
Now consider the remaining case: $i \in F_4$.
Notice that $i$ is not in the $4$ right-most codewords ($F_2 \cap F_b$, $F_b$, $F_b \cap F_c$, $F_c$) because $F_4$ intersects only $F_2$ (the L18 assumption is used here).  We consider two subcases based on whether $i\in F_2$.  If $i \in F_2$ (so, $i \in F_2 \cap F_4$), then $U_i$ is the open interval that encompasses the regions labeled by $F_4$, $F_2 \cap F_4$, and $F_2$ in Figure~\ref{L18CL}.  On the other hand, if $i \in F_4 \smallsetminus F_2$, then $U_i$ is the open interval of the region $F_4$ in Figure~\ref{L18CL}.

\begin{figure}[ht]
    \centering
        \begin{tikzpicture}
        \filldraw[black] (2,0) circle (2pt);
        \filldraw[black] (0,0) circle (2pt);
        \filldraw[black] (-2,0) circle (2pt);
        \filldraw[black] (-4,0) circle (2pt);
        \filldraw[black] (-6,0) circle (2pt);
        \filldraw[black] (4,0) circle (2pt);

        \draw[thick, black] (-6,0)--(-4,0) node [midway, fill=white] {$\mathbf{F_2}$};
        \draw[thick, black] (-4,0)--(-2,0) node [midway, fill=white] {$F_2 \cap F_b$};
        \draw[thick, black] (-2,0)--(0,0) node [midway, fill=white] {$\mathbf{F_b}$};
        \draw[thick, black] (0,0)--(2,0) node [midway, fill=white] {$F_b \cap F_c$};
        \draw[thick, black] (2,0)--(4,0) node [midway, fill=white] {$\mathbf{F_c}$};

        \filldraw[black] (-10,0) circle (2pt);
        \filldraw[black] (-8,0) circle (2pt);

        \draw[thick, black] (-10,0)--(-8,0) node [midway, fill=white] {$\mathbf{F_4}$};    
        \draw[thick, black] (-8,0)--(-6,0) node [midway, fill=white] {$F_2 \cap F_4$};
        \end{tikzpicture}    
    \caption{Convex realization of $\mincode^\ast \cup \C_{\cap F_{4}}$ (Case 1).}
    \label{L18CL}
\end{figure}

{\bf Case 2: $b=2$.}
In this case, we claim that 
the convex realizations of $\mincode^\ast$ and $\C_{\cap F_4}$ shown earlier can be ``extended'' into $\mathbb{R}^2$ and then glued together as in Figure \ref{fig:L18CT-case-2} to obtain a convex realization. 
The proof for this case proceeds similarly to that for Case 1.  
For $i\in [n] \smallsetminus F_4$, the convex open set $U_i$ is unchanged from that in the ``extended'' realization for $\mincode^\ast$.  
Now assume $i \in F_4$.
The fact that $F_4$ intersects only $F_2$ implies that $i$ is not in the ``left side'' codewords $F_a$ and $F_a \cap F_2$, nor in the ``right side'' codewords $F_2 \cap F_c$ and $F_c$.  If $i \in F_2$ (so, $i \in F_2 \cap F_4$), then $U_i$ is the open rectangle that encompasses the regions labeled by $F_2$, $F_2 \cap F_4$, and $F_4$ in Figure~\ref{fig:L18CT-case-2}.  On the other hand, if $i \in F_4 \smallsetminus F_2$, then $U_i$ is the open rectangle labeled by $F_4$ in Figure~\ref{fig:L18CT-case-2}.

\begin{figure}[ht]
\centering
\begin{tikzpicture}[scale = 1.2]
    \draw (0,0) rectangle (7,1);

    \draw (1,0) -- (1,1);
    \draw (3,0) -- (3,1);
    \draw (4,0) -- (4,1);
    \draw (6,0) -- (6,1);

    \node at (.5,.5) {$\mathbf{F_a}$};
    \node at (2,.5) {$F_a \cap F_2$};
    \node at (3.5,.5) {$\mathbf{F_2}$};
    \node at (5,.5) {$F_2 \cap F_c$};
    \node at (6.5,.5) {$\mathbf{F_c}$};

    \draw (3,-2) -- (3,0);
    \draw (4,-2) -- (4,0);
    \draw (3,-2) -- (4,-2);

    \draw (3,-1) -- (4,-1);

    \node at (3.5,-0.5) {$F_2 \cap F_4$};
    \node at (3.5,-1.5) {$\mathbf{F_4}$};
    
\end{tikzpicture}
\caption{T-shaped convex realization of $\C_{\min} \cup \C_{\cap F_{4}}$ (Case 2).}
    \label{fig:L18CT-case-2}
\end{figure}
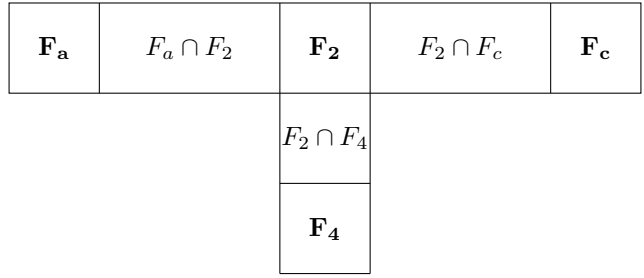

{\bf Case 3: $c=2$.}  This case is symmetric to the $a=2$ case (Case 1).
\end{proof}

\begin{lemma}[L21] \label{L21}
    Let $\C$ be a $4$-maximal neural code, and let
    $\mathcal{F}(\C)=\{F_1,F_2,F_3,F_4\}$ denote the set of maximal codewords.
    If the following hold:
    \begin{enumerate}
        \item $\C$ has no local obstructions,
        \item the nerve $\mathcal{N}(\mathcal{F}(\C))$ is L21 (in particular, $F_1 \cap F_2 \cap F_3$ is the unique nonempty triplewise-intersection of maximal codewords), and
        \item $\Delta(\{F_1,F_2,F_3\})$ satisfies the Path-of-Facets Condition;
    \end{enumerate}
    then $\C$ is convex. 
\end{lemma}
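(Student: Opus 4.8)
The plan is to follow the blueprint of Lemma~\ref{lem:L18}: reduce, via Lemma~\ref{lem:restrict-facet} and monotonicity, to realizing a single explicit code, and then glue planar realizations. Since $\C$ is an L21 code with no local obstructions, Lemma~\ref{lem:restrict-facet} gives $\mincode^\ast \cup \C_{\cap F_4} \subseteq \C$, where $\mincode^\ast := \mincode(\simp{\{F_1,F_2,F_3\}})$ and, because the nerve is L21 (so $F_1 \cap F_4 = \varnothing$ but $F_2\cap F_4, F_3\cap F_4 \neq \varnothing$), we have $\C_{\cap F_4} = \{\varnothing,\, F_2\cap F_4,\, F_3\cap F_4,\, F_4\}$. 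As $\mincode^\ast \cup \C_{\cap F_4}$ has the same maximal codewords as $\C$, we obtain $\mincode^\ast\cup\C_{\cap F_4}\subseteq \C \subseteq \simp{\mincode^\ast\cup\C_{\cap F_4}}$, and Lemma~\ref{lem:monotonicity} reduces the problem to constructing a convex realization of $\mincode^\ast\cup\C_{\cap F_4}$. Here $\C_{\cap F_4}$ has the one-dimensional realization in which $F_4$ is an interval carrying two disjoint subintervals labeled $F_2\cap F_4$ and $F_3\cap F_4$, while, since the Path-of-Facets Condition holds, Lemma~\ref{lem:path}(b) supplies the linear realization of $\mincode^\ast$ in Figure~\ref{fig:realization-1-dim} for the permutation $(a,b,c)$ of $(1,2,3)$ with $(F_a\cap F_c)\smallsetminus F_b=\varnothing$; thus $F_b$ is the middle facet and $F_a,F_c$ are the two ends.

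The case analysis I would run is driven by which facet is the middle facet $F_b$, just as the cases $a=2,b=2,c=2$ structure the proof of Lemma~\ref{lem:L18}. Because $F_2$ and $F_3$ play symmetric roles (both meet $F_4$, while $F_1$ does not), there are two essential cases. \textbf{Case A:} $F_b\in\{F_2,F_3\}$. Then the two facets meeting $F_4$ are \emph{adjacent} in the path (the middle facet and one end). I would extend the linear realization of $\mincode^\ast$ into $\R^2$ and attach the realization of $\C_{\cap F_4}$ from below, placing $F_4$ under the union of the pure-$F_2$ and pure-$F_3$ cells while keeping it clear of the $F_1$-cell and of the overlap $F_2\cap F_3$. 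This is the direct analogue of the T-shaped gluing in Case~2 of Lemma~\ref{lem:L18}; the only new feature is that $F_4$ must meet two adjacent facets rather than one. (One convenient bookkeeping device is that $\{F_2,F_3,F_4\}$ has all three pairwise intersections nonempty and empty triple intersection, hence is max-intersection-complete and convex by Lemma~\ref{Lem:max-intersection}, so one may build the picture around such a realization and then glue on the pure-$F_1$ portion.)

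\textbf{Case B} (the crux): $F_b=F_1$. Now the two facets meeting $F_4$, namely $F_2$ and $F_3$, are the two \emph{ends} of the path, and the middle facet $F_1$ — which must avoid $F_4$ — separates them. A collinear or naive ``attach from below'' gluing fails here: the neurons of $T=F_1\cap F_2\cap F_3$ (which is nonempty, since the nerve L21 contains the $2$-simplex $\{F_1,F_2,F_3\}$) lie in all of $F_1,F_2,F_3$, so their convex regions must bridge the pure-$F_2$ and pure-$F_3$ cells; if $F_4$ is inserted between $F_2$ and $F_3$, these bridges are forced across $F_4$, creating a spurious overlap with $F_1$. The fix is to bend the layout into a \emph{rhombus} with $F_1$ and $F_4$ at opposite poles and $F_2,F_3$ at the two equatorial poles: all $\mincode^\ast$-cells (namely $F_1$, $F_1\cap F_2$, $F_1\cap F_3$, and the pure-$F_2$, pure-$F_3$ cells) go in the closed upper half, all $\C_{\cap F_4}$-cells go in the lower half, and $F_2,F_3$ straddle the dividing line. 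Since here $F_2\cap F_3 = T\subseteq F_1$, the triple neurons can be realized entirely in the upper half as part of the $F_1$-region, so their convex sets never descend to $F_4$, while $F_4$ sits at the lower pole and meets $F_2,F_3$ along the two lower edges, automatically avoiding the opposite pole $F_1$ and the overlap $F_2\cap F_3$. Convexity would then be checked neuron by neuron: every neuron's region is a single cell, a union of cells along one edge of the rhombus, or one of the two half-triangles, each convex by construction.

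I expect Case B to be the main obstacle. The difficulty is to design the rhombus cell decomposition so that, simultaneously, (i) each $T$-neuron's region is exactly the convex upper triangle, (ii) each $F_2\cap F_4$- and $F_3\cap F_4$-neuron's region is convex even though such a neuron lies in $F_4$ and must therefore cover the entire $F_4$-pole together with a pure-$F_2$ (resp.\ pure-$F_3$) cell, and (iii) no spurious codeword — such as $F_1\cap F_4$, $F_2\cap F_3\cap F_4$, or a bare $(F_2\cap F_4)\cup(F_3\cap F_4)$ — is created. Conditions (ii) and (iii) are precisely what defeat the straightforward axis-parallel constructions, and verifying that the rhombus layout simultaneously satisfies all three is the technical heart of the proof.
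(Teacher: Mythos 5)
Your proposal is correct and follows essentially the same route as the paper's proof: the same reduction to $\mincode^\ast \cup \C_{\cap F_4}$ via Lemma~\ref{lem:restrict-facet} and Lemma~\ref{lem:monotonicity}, the same case split on which facet occupies the middle of the Path-of-Facets realization, and the same gluings --- your Case A is the paper's $b\in\{2,3\}$ case, and your ``rhombus with $F_1$ and $F_4$ at opposite poles'' is exactly the paper's $b=1$ realization in Figure~\ref{fig:L21Pconvexrealization}. The neuron-by-neuron verification you defer in Case B is carried out in the paper precisely along the lines you list, using $F_1\cap F_4=\varnothing$ and $F_2\cap F_3\cap F_4=\varnothing$ to see that each $U_i$ with $i\in F_4$ covers at most the cells $F_4$, $F_j\cap F_4$, $F_j$ for a single $j\in\{2,3\}$, which form a convex region by construction.
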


\begin{proof}
As we did in the proof of Lemma~\ref{lem:L18}, we begin by defining the following (with notation as in Lemma~\ref{lem:restrict-facet}),  
where $\C^\ast := \{F_1, F_2, F_3\}$: 
    \[
    \mincode^\ast ~:=~ \mincode(\Delta(\C^\ast))
    \quad \mathrm{and}
    \quad 
    \C_{\cap F_4} 
    ~:=~
    \{F_4\} \cup 
    \{F_1 \cap F_4\} \cup \{ F_2 \cap F_4 \} \cup \{F_3 \cap F_4\}
        ~=~ \{\varnothing,~ F_2 \cap F_4,~ F_3 \cap F_4, ~F_4\}~.
    \]
    Here we used the fact that the nerve $\mathcal{N}(\mathcal{F}(\C))$ is L21 to conclude that 
$F_1 \cap F_4=\varnothing$, while 
$F_2 \cap F_4 $
 and $F_3 \cap F_4 $ are nonempty and unequal (that is, $F_2 \cap F_4  \neq F_3 \cap F_4 $).  Also, neither $F_2 \cap F_4 $
 nor $F_3 \cap F_4 $ is equal to $F_4$, as the sets $F_i$ are maximal codewords.

For essentially the same reasons as in the proof of Lemma~\ref{lem:L18} (in particular, Lemma~\ref{lem:monotonicity} applies both to L18 and L21 codes), it suffices to prove that the code $\mincode^\ast\cup \C_{\cap F_4} $ is convex.  To do so, we will ``glue'' together convex realizations of 
    $\mincode^\ast$ and $\C_{\cap F_4} $.

We begin with $\C_{\cap F_4}  = \{\varnothing,~ F_2 \cap F_4,~ F_3 \cap F_4, ~F_4\}$.  It is straightforward to check that the two realizations of $\C_{\cap F_4}$ shown in Figure~\ref{fig:L21PCF4} are both convex realizations.  
    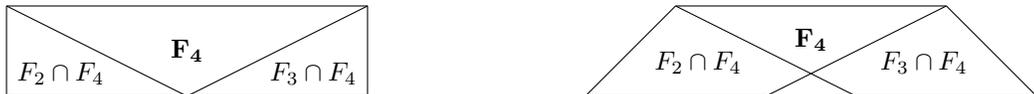
\begin{figure}[ht]
        \centering
        \begin{subfigure}{.5\textwidth}
        \centering
        \begin{tikzpicture}[scale = 1.2]
        \draw (0,1) -- (0,2);
        \draw (0,2) -- (4,2);
        \draw (4,1) -- (4,2);
    
        \draw (2,1) -- (4,2);
        \draw (2,1) -- (0,2);
        \draw (0,1) -- (4,1);
        
        \node at (2,1.5) {$\mathbf{F_{4}}$};
        \node at (0.6,1.25) {$F_{2} \cap F_{4}$};
        \node at (3.4,1.25) {$F_{3} \cap F_{4}$};
    \end{tikzpicture}
        \end{subfigure}%
        \begin{subfigure}{.5\textwidth}
        \centering
                \begin{tikzpicture}[scale = 0.3]
        \draw (4,4) -- (12,4);
        \draw (16,4) -- (24,4);
        \draw (8,8) -- (20,8);
    
        \draw (4,4) -- (8,8);
        \draw (12,4) -- (20,8);
        \draw (8,8) -- (16,4);
        \draw (20,8) -- (24,4);
    
        \node at (9,5.5) {$F_{2} \cap F_{4}$};e
        \node at (14,6.5) {$\mathbf{F_{4}}$};
        \node at (19,5.5) {$F_{3} \cap F_{4}$};
    \end{tikzpicture}
        \end{subfigure}
        \caption{Two convex realizations of $\C_{\cap F_4}$.}
        \label{fig:L21PCF4}

    \end{figure}

    Next, we consider $\mincode^\ast$.  
    By hypothesis, $\Delta(\C^\ast)$ satisfies the Path-Of-Facets Condition. So, by Lemma~\ref{lem:path}(b), $\mincode^\ast$ is convex and can be realized in $\mathbb{R}$ as shown earlier in Figure~\ref{fig:realization-1-dim}. Recall that the labeling of this realization depends on a
    permutation     
    $(a,b,c)$ of $(1,2,3)$  
     such that $(F_a \cap F_b) \smallsetminus F_c$ and $(F_b \cap F_c)\smallsetminus F_a$ are nonempty, while $(F_a \cap F_c) \smallsetminus F_b$ is empty.  Using this notation, consider the $b=2$ and $b=1$ cases; corresponding convex realizations (extended into $\mathbb{R}^2$) of $\mincode^\ast$ are shown in Figure~\ref{fig:L21-extend-into-2-d}.
     
    \begin{figure}[ht]
        \centering
        \begin{subfigure}{.5\textwidth}
            \centering
                \begin{tikzpicture}[scale = 1.2]
        \draw (-2,0) -- (0,1);
        \draw (0,0) -- (2,1);
    
        \draw (2,1) -- (4,0);

        \draw (-2,1) -- (4,1);
        \draw (-2,0) -- (4,0);

        \draw (-2,0) -- (-2,1);
        \draw (0,0) -- (0,1);
        \draw (4,0) -- (4,1);
        
        \node at (-1.4,0.7) {$\mathbf{F_{1}}$};
        \node at (-0.6,0.3) {$F_{1} \cap F_{2}$};
        \node at (2,0.3) {$F_{2} \cap F_{3}$};
        \node at (0.6,0.75) {$\mathbf{F_{2}}$};
        \node at (3.4,0.75) {$\mathbf{F_{3}}$};
    \end{tikzpicture}
        \end{subfigure}%
        \begin{subfigure}{.5\textwidth}
        \centering
            \begin{tikzpicture}[scale = 0.3]
        \draw (0,0) -- (28,0);
        \draw (4,4) -- (24,4);
    
        \draw (12,0) -- (12,4);
        \draw (16,0) -- (16,4);
    
        \draw (0,0) -- (4,4);
        \draw (4,0) -- (12,4);
        \draw (16,4) -- (24,0);
        \draw (24,4) -- (28,0);
    
        \node at (4,2) {$\mathbf{F_{2}}$};e
        \node at (9.75,1.25) {$F_{1} \cap F_{2}$};e
        \node at (14,2) {$\mathbf{F_{1}}$};
        \node at (18.25,1.25) {$F_{1} \cap F_{3}$};
        \node at (24,2) {$\mathbf{F_{3}}$};
    \end{tikzpicture}
        \end{subfigure}
        \caption{Extensions of the realization of $\mincode^\ast$ in Figure~\ref{fig:realization-1-dim} into $\R^2$ in the cases of $b=2$ (left) and $b=1$ (right).}
        \label{fig:L21-extend-into-2-d}
    \end{figure}

    Next, we glue together realizations of $\mincode^\ast$  and $\C_{\cap F_4}$ (from Figures~\ref{fig:L21PCF4} and~\ref{fig:L21-extend-into-2-d}, respectively).  
    Specifically, for the cases $b=2$ and $b=1$,
    we obtain the 
    realizations shown on the left and right (respectively) in Figure~\ref{fig:L21Pconvexrealization}.
    Finally, the $b=3$ case is symmetric to the $b=1$ case.

     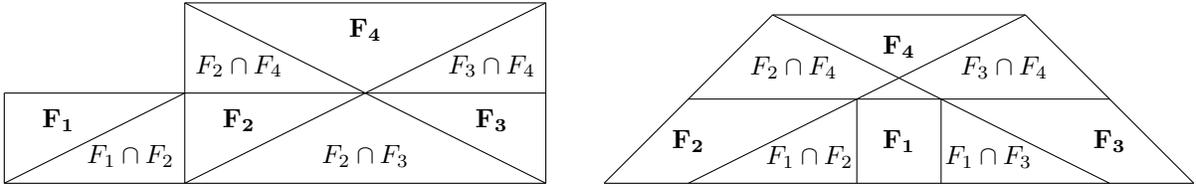
\begin{figure}[ht]
        \centering
        \begin{subfigure}{.5\textwidth}
            \centering
                \begin{tikzpicture}[scale = 1.2]
        \draw (0,0) -- (0,2);
        \draw (0,2) -- (4,2);
        \draw (4,2) -- (4,0);
    
        \draw (0,0) -- (4,2);
        \draw (0,2) -- (4,0);
        \draw (0,1) -- (4,1);
        
        \node at (2,1.7) {$\mathbf{F_{4}}$};
        \node at (0.6,1.3) {$F_{2} \cap F_{4}$};
        \node at (3.4,1.3) {$F_{3} \cap F_{4}$};
        \node at (0.6,0.7) {$\mathbf{F_{2}}$};
        \node at (3.4,0.7) {$\mathbf{F_{3}}$};
    
        \draw (-2,1) -- (0,1);
        \draw (-2,0) -- (4,0);
        
        \draw (-2,0) -- (-2,1);
    
        \draw (-2,0) -- (0, 1);
    
        \node at (-1.4,0.7) {$\mathbf{F_{1}}$};
        \node at (-0.6,0.3) {$F_{1} \cap F_{2}$};
        \node at (2,0.3) {$F_{2} \cap F_{3}$};
    \end{tikzpicture}
        \end{subfigure}%
        \begin{subfigure}{.5\textwidth}
        \centering
            \begin{tikzpicture}[scale = 0.28]
        \draw (0,0) -- (28,0);
        \draw (4,4) -- (24,4);
        \draw (8,8) -- (20,8);
    
        \draw (12,0) -- (12,4);
        \draw (16,0) -- (16,4);
    
        \draw (0,0) -- (8,8);
        \draw (4,0) -- (20,8);
        \draw (8,8) -- (24,0);
        \draw (20,8) -- (28,0);
    
        \node at (9,5.5) {$F_{2} \cap F_{4}$};e
        \node at (14,6.5) {$\mathbf{F_{4}}$};
        \node at (19,5.5) {$F_{3} \cap F_{4}$};
        \node at (4,2) {$\mathbf{F_{2}}$};e
        \node at (9.75,1.25) {$F_{1} \cap F_{2}$};e
        \node at (14,2) {$\mathbf{F_{1}}$};
        \node at (18.25,1.25) {$F_{1} \cap F_{3}$};
        \node at (24,2) {$\mathbf{F_{3}}$};
    \end{tikzpicture}
        \end{subfigure}
        \caption{
        Result of gluing the realizations of $\C_{\cap F_4}$ and $\C_{\min}^\ast$ in Figures~\ref{fig:L21PCF4} and~\ref{fig:L21-extend-into-2-d} in the cases of $b=2$ and $b=1$, respectively.
        }
        \label{fig:L21Pconvexrealization}
        \label{fig:L21Dconvexrealization}
    \end{figure}
    
To complete the proof, we need only show that the realizations shown in Figure \ref{fig:L21Pconvexrealization} are indeed convex realizations.  The explanation is essentially the same for the two realizations, 
as follows.
Let $n$ denote the number of neurons of $\C$.  For $i\in [n] \smallsetminus F_4$, the open set $U_i$ is unchanged from the realization shown earlier for $\mincode^\ast$ (indeed, the fact that $i \notin F_4$ implies that $i$ is not in any of the three ``added'' codewords $F_4$, $F_2 \cap F_4$, and $F_3 \cap F_4$).
Now consider the remaining case: $i \in F_4$.
Notice that $i$ is not in any of the 
nonempty codewords of $\mincode^\ast$ (those codewords shown in the ``bottom level'' of the realizations); the L21 assumption is used here.  We consider three subcases based on whether $i \in F_2$, $i\in F_3$, or $i \notin (F_2 \cup F_3)$.  (It is not possible for $i \in F_2 \cap F_3$, because $F_2 \cap F_3 \cap F_4 = \varnothing$ due to the L21 assumption.)  If $i\in F_2$ (respectively, $i\in F_3$), then $U_i$ is the open set that encompasses the regions labeled by $F_4$,  $F_2 \cap F_4$, and $F_2$ (respectively, $F_4$,  $F_3 \cap F_4$, and $F_3$) and no other regions (due to the L21 assumption), and this set $U_i$ is readily seen to be convex.   On the other hand, if $i \in F_4 \smallsetminus (F_2 \cup F_3)$, then $U_i$ is the open triangular region labeled by $F_4$ and thus is convex.    
    \end{proof}

\begin{remark}[Minimal codes for L18 and L21]
It follows from the proofs of Lemmas~\ref{lem:L18} and~\ref{L21} that, for L18 and L21 codes $\C$, the minimal code of $\Delta(\C)$ is precisely equal to $\mincode^\ast \cup \C_{\cap F_4}$. 
\end{remark}

\subsection{Convexity of L22 codes} \label{sec:L22}
The first simplicial complex in Figure~\ref{fig:image-simplicial-complex} with more than one $2$-simplex is L22. Despite this added complexity, the convexity of L22 codes is again characterized by the presence of local obstructions (Theorem~\ref{thm: l22} below).  We first need two lemmas, as follows.

\begin{lemma}[Pairwise intersections] \label{lem:L22pairwise}
    Let $\C$ be a $4$-maximal code, and let $\mathcal{F}(\C)=\{F_1,F_2,F_3,F_4\}$ denote the set of maximal codewords. 
    Assume that $\mathcal{N}(\mathcal{F}(\C))$ is L22, and consider distinct $i,j \in \{1,2,3,4\}$. 
    If $F_i \cap F_j \neq F_1 \cap F_2 \cap F_3$ and $F_i \cap F_j \neq F_2 \cap F_3 \cap F_4$, then $F_i \cap F_j \in \mincode$, 
    where $\mincode$ is the minimal code of~$\Delta(\C)$.
\end{lemma}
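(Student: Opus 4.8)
The plan is to reduce the statement to Lemma~\ref{lem:intersection-2-facets} by showing that, under the stated hypothesis, the pairwise intersection $\sigma := F_i \cap F_j$ is contained in no facet other than $F_i$ and $F_j$; Lemma~\ref{lem:intersection-2-facets} then forces $\link{\simp{\C}}{\sigma}$ to be non-contractible, so that $\sigma$ is mandatory and hence lies in $\mincode$.

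First I would record the combinatorial structure of L22 (as computed in Example~\ref{ex:nerve-is-L22}): among the maximal codewords, the only nonempty triplewise intersections are $F_1 \cap F_2 \cap F_3$ and $F_2 \cap F_3 \cap F_4$ (the two $2$-simplices of the nerve), while every remaining triplewise intersection and the quadruplewise intersection $F_1 \cap F_2 \cap F_3 \cap F_4$ is empty. The empty case of the lemma is then immediate: if $\sigma = \varnothing$, then $\sigma \in \mincode$ by definition (recall~\eqref{eq:c-min}), and the hypothesis is automatically satisfied since the two distinguished triplewise intersections are nonempty.

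The main step is the case $\sigma \neq \varnothing$, where I would argue by contradiction that $\sigma$ is not contained in any facet $F_k$ with $k \in \{1,2,3,4\} \smallsetminus \{i,j\}$. Indeed, if $\sigma \subseteq F_k$, then $\varnothing \neq \sigma = F_i \cap F_j \cap F_k$, so $\{F_i, F_j, F_k\}$ is a $2$-simplex of the nerve $\mathcal{N}(\mathcal{F}(\C))$; by the structure of L22 just recalled, this forces $\{i,j,k\} = \{1,2,3\}$ or $\{i,j,k\} = \{2,3,4\}$, whence $\sigma = F_1 \cap F_2 \cap F_3$ or $\sigma = F_2 \cap F_3 \cap F_4$. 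Either conclusion contradicts the hypothesis, establishing the non-containment.

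Once the non-containment is in hand, Lemma~\ref{lem:intersection-2-facets} applies directly: $\sigma$ is the intersection of the two distinct facets $F_i$ and $F_j$ and lies in no other facet, so $\link{\simp{\C}}{\sigma}$ is not contractible, making $\sigma$ a mandatory face of $\simp{\C}$ and therefore an element of $\mincode$. I do not anticipate any serious obstacle: the whole argument is a short reduction to Lemma~\ref{lem:intersection-2-facets}, and the only point requiring care is the bookkeeping that containment of $\sigma$ in a third facet forces $\sigma$ to coincide with one of the two distinguished triplewise intersections — which is exactly what the hypothesis excludes.
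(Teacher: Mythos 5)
Your proof is correct and is essentially the paper's argument run in the forward direction: the paper proves the contrapositive (if $F_i\cap F_j\notin\mincode$ then the link is contractible, so Lemma~\ref{lem:intersection-2-facets} forces containment in a third facet, which by the L22 structure forces $F_i\cap F_j$ to be one of the two distinguished triplewise intersections), while you argue directly that non-containment in a third facet holds and then invoke the same lemma. The logical content, the key lemma, and the L22 bookkeeping are identical, and your separate handling of the $\sigma=\varnothing$ case via~\eqref{eq:c-min} is fine.
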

\begin{proof}
    We prove the contrapositive.  Assume that $F_i \cap F_j \notin \mincode$.  By definition, this means that $F_i \cap F_j \neq \varnothing$ and the link of $F_i \cap F_j $ in $\Delta(\C)= \Delta(\mincode)$ is contractible.  Now Lemma~\ref{lem:intersection-2-facets} implies that $(F_i \cap F_j) \subseteq F_k$ for some  $k \in \{1,2,3,4\} \smallsetminus \{i,j\}$ and so 
    $F_i \cap F_j= F_i \cap F_j \cap F_k$.  Hence, $\varnothing \neq F_i \cap F_j= F_i \cap F_j \cap F_k$.  However, $F_1\cap F_2 \cap F_3 $ and $F_2\cap F_3 \cap F_4 $ are the only \uline{nonempty} triplewise intersections of the maximal codewords $F_i$ (because $\mathcal{N}(\mathcal{F})$ is L22).  So, 
    $F_i \cap F_j= F_1 \cap F_2 \cap F_3$ or $F_i \cap F_j= F_2 \cap F_3 \cap F_4$, which completes the proof.
\end{proof}
\begin{lemma}[L22 minimal codes] \label{lem:L22min}
    Let $\C$ be a $4$-maximal code, and let $\mathcal{F}(\C)=\{F_1,F_2,F_3,F_4\}$ denote the set of maximal codewords. 
        Let $\mincode$ denote the minimal code of $\Delta(\C)$.  
    If $\mathcal{N}(\mathcal{F}(\C))$ is L22, then $\mincode = (\C_1^\ast)_{\min} \cup (\C_4^\ast)_{\min}$, where 
    $\mathcal{C}^*_1 := \{F_1, F_2, F_3\}$
    and $ 
    \mathcal{C}^*_4 := \{F_2, F_3, F_4\}$.          
\end{lemma}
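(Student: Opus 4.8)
The plan is to prove the two set inclusions separately, after first disposing of the easy elements. Both $\mincode$ and $(\C_1^\ast)_{\min}\cup(\C_4^\ast)_{\min}$ contain $\varnothing$ and the four facets $F_1,F_2,F_3,F_4$ (here $(\C_1^\ast)_{\min}$ supplies $F_1,F_2,F_3$ and $(\C_4^\ast)_{\min}$ supplies $F_2,F_3,F_4$). By~\eqref{eq:c-min}, it therefore suffices to show that the two sides have the same non-facet mandatory faces, and by Lemma~\ref{lem:intersection-facet} every such face is a nonempty intersection of two or more facets. The entire argument rests on one structural consequence of the L22 hypothesis: since $F_1\cap F_4=\varnothing$, no nonempty set is contained in both $F_1$ and $F_4$. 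Hence, writing $S(\sigma):=\{F_i : \sigma\subseteq F_i\}$ for the set of facets of $\simp{\C}$ containing a nonempty intersection $\sigma$, the set $S(\sigma)$ cannot contain both $F_1$ and $F_4$, so $S(\sigma)\subseteq\{F_1,F_2,F_3\}$ or $S(\sigma)\subseteq\{F_2,F_3,F_4\}$.

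The engine of the proof is the Nerve Lemma (Lemma~\ref{nerve}). For a nonempty $\sigma$ with $S(\sigma)\subseteq\{F_1,F_2,F_3\}$, the facets of $\simp{\C}$ containing $\sigma$ are exactly the facets of $\simp{\C_1^\ast}$ containing it, so $\mathcal{L}_{\simp{\C}}(\sigma)=\mathcal{L}_{\simp{\C_1^\ast}}(\sigma)$, and Lemma~\ref{nerve} gives $\link{\simp{\C}}{\sigma}\simeq\link{\simp{\C_1^\ast}}{\sigma}$; in particular one link is contractible if and only if the other is, so $\sigma$ is mandatory in $\simp{\C}$ exactly when it is mandatory in $\simp{\C_1^\ast}$. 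The symmetric statement holds for the triangle $\{F_2,F_3,F_4\}$ and $\simp{\C_4^\ast}$. This single observation yields both inclusions at once. If $\sigma\in\mincode$ is a non-facet mandatory face, then $S(\sigma)$ lies in one of the two triangles, and the corresponding homotopy equivalence places $\sigma$ in $(\C_1^\ast)_{\min}$ or $(\C_4^\ast)_{\min}$. Conversely, a non-facet mandatory face of $\simp{\C_1^\ast}$ (resp.\ $\simp{\C_4^\ast}$) is an intersection of facets lying inside that triangle, and the same equivalence shows its link in $\simp{\C}$ is non-contractible, so it is mandatory in $\simp{\C}$.

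I expect the main obstacle to be the ``shared-edge'' faces—those $\sigma\subseteq F_2\cap F_3$, which are faces of both $\simp{\C_1^\ast}$ and $\simp{\C_4^\ast}$—and especially the face $F_2\cap F_3$ itself. For the converse inclusion I must verify that a non-facet mandatory face $\sigma$ of $\simp{\C_1^\ast}$ genuinely has $S(\sigma)\subseteq\{F_1,F_2,F_3\}$, i.e.\ that $\sigma\not\subseteq F_4$. If $\sigma\subseteq F_1$ this is immediate from $F_1\cap F_4=\varnothing$; the only remaining case is $\sigma=F_2\cap F_3$, and here I will use that $F_1\cap F_2\cap F_3\neq\varnothing$ (the L22 hypothesis) to exclude $F_2\cap F_3\subseteq F_4$, since that containment would force $F_1\cap F_2\cap F_3\subseteq F_1\cap F_4=\varnothing$. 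The symmetric nonemptiness of $F_2\cap F_3\cap F_4$ handles the mirror case and also shows $F_2\cap F_3$ equals neither triplewise intersection, so $S(F_2\cap F_3)=\{F_2,F_3\}$ and this face is treated consistently on both sides. (For the pairwise intersections this bookkeeping can alternatively be shortcut by Lemma~\ref{lem:L22pairwise}, leaving only the two triplewise intersections $F_1\cap F_2\cap F_3$ and $F_2\cap F_3\cap F_4$ to be placed via the Nerve Lemma.) Once these degenerate coincidences are ruled out, the two inclusions combine to give $\mincode=(\C_1^\ast)_{\min}\cup(\C_4^\ast)_{\min}$.
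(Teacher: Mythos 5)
Your proposal is correct and follows essentially the same route as the paper's proof: both reduce to the nonempty intersections of facets, use $F_1\cap F_4=\varnothing$ to show that each such intersection has the same set of containing facets in $\simp{\C}$ as in the relevant triangle subcomplex (so the links agree, via the Nerve Lemma), and treat the shared face $F_2\cap F_3$ separately using Lemma~\ref{lem:intersection-2-facets}. Your packaging via $S(\sigma)$ proves both inclusions at once where the paper enumerates the seven intersections explicitly, but the underlying argument is the same.
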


\begin{proof}
     Our proof proceeds by examining each of the possible codewords in $\mincode$. Every nonempty codeword in $\mincode$ is either a facet $F_i$ of $\Delta(\C)$ or the intersection of two or more facets of $\Delta(\C)$ (recall equation~\eqref{eq:c-min} and Lemma~\ref{lem:intersection-facet}). By definition, the codewords $\varnothing, F_1, F_2, F_3, F_4$ are always contained in both $\mincode$ and the union $(\C^\ast_1)_{\min} \cup (\C^\ast_4)_{\min}$. The remainder of the proof therefore examines the intersections of facets of $\Delta(\C)$. 

    Let $\sigma \in \{F_1 \cap F_2, F_1 \cap F_3, F_1 \cap F_2 \cap F_3\}$. Note that $\sigma \cap F_4 = \varnothing$
    (this is because $\mathcal{N}(\mathcal{F}(\C))$ is L22, which implies that the only nonempty triplewise intersection involving $F_4$ is $F_2 \cap F_3 \cap F_4$). Consider the facets of the link $\link{\Delta(\C_1^\ast)}{\sigma}$, namely,

        \[\mathcal{L}_{\Delta((\C_1^\ast)_{\min})}(\sigma) = \{F \smallsetminus \sigma : F \text{ is a facet of } \Delta((\C_1^\ast)_{\min}) \text{ containing } \sigma \}.\]

        Every facet $F\smallsetminus\sigma \in \mathcal{L}_{\Delta((\C_1^\ast)_{\min})}(\sigma)$ is also a facet in $\mathcal{L}_{\Delta(\mincode)}(\sigma)$, since such an $F$ is also a facet of $\mincode$. Furthermore, $F_4 \smallsetminus \sigma$ cannot be in $\mathcal{L}_{\Delta(\mincode)}(\sigma)$, because $\sigma \cap F_4 = \varnothing$. Therefore, $\mathcal{L}_{\Delta(\mincode)}(\sigma) = \mathcal{L}_{\Delta((\C_1^\ast)_{\min})}(\sigma)$. Since the link itself is a simplicial complex, and both $\link{\Delta(\C_1^\ast)}{\sigma}$ and $\link{\Delta(\mincode)}{\sigma}$ have the same facets, we have $\link{\Delta(\C_1^\ast)}{\sigma} = \link{\Delta(\mincode)}{\sigma}$. We then conclude that $\sigma \in (\C_1^\ast)_{\min}$ if and only if $\sigma \in \mincode$. A similar argument holds for $\sigma \in \{F_2 \cap F_4, F_3 \cap F_4, F_2 \cap F_3 \cap F_4\}$; we conclude that $\sigma \in (\C_4^\ast)_{\min}$ if and only if $\sigma \in \mincode$.

         The final 
         intersection of facets to consider is $F_2 \cap F_3$. Because $\C$ is an L22 code, we must have $F_2 \cap F_3\not\subseteq F_1$ and $F_2 \cap F_3 \not\subseteq F_4$. By Lemma \ref{lem:intersection-2-facets}, this implies that $F_2 \cap F_3$ is always in each of $(\C_1^\ast)_{\min}, (\C_4^\ast)_{\min}$, and $\mincode$. 
         We conclude that
         $(\C^\ast_1)_{\min} \cup (\C^\ast_4)_{\min}= \mincode$.  
\end{proof}

In the proof of the following result, some of the key ideas
were mentioned earlier in Example~\ref{ex:path-facets-L22}.

\begin{thm}[L22] \label{thm: l22}
    Let $\C$ be a $4$-maximal code, and let
    $\mathcal{F}(\C)=\{F_1,F_2,F_3,F_4\}$ denote the set of maximal codewords.
    If $\mathcal{N}(\mathcal{F}(\C))$ is L22, then $\C$ is convex if and only if $\C$ has no local obstructions.
\end{thm}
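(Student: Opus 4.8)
The plan is to prove only the nontrivial implication, since the forward direction (convex $\Rightarrow$ no local obstructions) is Lemma~\ref{lem: convex-no-obs}. So assume $\C$ has no local obstructions. Then $\mincode \subseteq \C$, and because $\C$ and $\mincode := \mincode(\Delta(\C))$ have the same facets we have $\Delta(\C) = \Delta(\mincode)$; hence $\mincode \subseteq \C \subseteq \Delta(\mincode)$ and Lemma~\ref{lem:monotonicity} reduces the problem to showing that $\mincode$ is convex. By Lemma~\ref{lem:L22min}, $\mincode = (\C_1^\ast)_{\min} \cup (\C_4^\ast)_{\min}$, where $\C_1^\ast = \{F_1,F_2,F_3\}$ and $\C_4^\ast = \{F_2,F_3,F_4\}$. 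Throughout I will use the structure forced by the L22 nerve: $F_1\cap F_4=\varnothing$ (so $F_1$ and $F_4$ are disjoint as sets of neurons), while $F_1\cap F_2\cap F_3$ and $F_2\cap F_3\cap F_4$ are the only nonempty triplewise intersections. Moreover, by Lemma~\ref{lem:triple}(a), each triple lies in $\mincode$ exactly when the Path-of-Facets Condition fails for the corresponding sub-code (the degenerate coincidence in which a triple equals $F_2\cap F_3$ is impossible here, since $F_2\cap F_3\subseteq F_1$ would force $F_2\cap F_3\cap F_4\subseteq F_1\cap F_4=\varnothing$).

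I would first dispose of the case in which both triples $F_1\cap F_2\cap F_3$ and $F_2\cap F_3\cap F_4$ are codewords of $\mincode$ (equivalently, both Path-of-Facets Conditions fail). Here $\mincode$ is max-intersection-complete, and hence convex by Lemma~\ref{Lem:max-intersection}: the only nonempty max-intersection faces are the five pairwise intersections and these two triples (the fourfold intersection is contained in $F_1\cap F_4=\varnothing$). By Lemma~\ref{lem:L22pairwise}, any pairwise intersection not equal to one of the two triples already lies in $\mincode$, and a pairwise intersection equal to a triple is present because that triple is; the only subtlety, handled using $F_1\cap F_4=\varnothing$, is to rule out a pairwise intersection coinciding with the non-adjacent triple.

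In the complementary case, at least one triple is absent, i.e.\ at least one of $\C_1^\ast$, $\C_4^\ast$ satisfies the Path-of-Facets Condition, and I would build a convex realization of $\mincode$ directly by gluing convex realizations of $(\C_1^\ast)_{\min}$ and $(\C_4^\ast)_{\min}$. Each is a $3$-maximal, local-obstruction-free code and hence convex (Proposition~\ref{prop:3-max}); where Path-of-Facets holds I would take the explicit $1$-dimensional chain of Lemma~\ref{lem:path}(b), and where it fails (at most one sub-code, by the previous case) a planar Venn-type realization of three overlapping facets. The key simplification is that whenever Path-of-Facets holds for $\C_1^\ast$ or $\C_4^\ast$, the central facet of the chain must be a \emph{shared} facet $F_2$ or $F_3$: if $F_1$ (resp.\ $F_4$) were central, then $F_2\cap F_3\subseteq F_1$ (resp.\ $\subseteq F_4$), again forcing $F_2\cap F_3\cap F_4=\varnothing$ (resp.\ $F_1\cap F_2\cap F_3=\varnothing$), a contradiction. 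Consequently the two realizations always agree, up to reversal, on the common segment $F_2$–$(F_2\cap F_3)$–$F_3$, and I would attach the $F_1$-exclusive cells on one side of this core and the $F_4$-exclusive cells on the opposite side. Opposite placement, together with $F_1\cap F_4=\varnothing$, ensures no glued cell contains neurons of both $F_1$ and $F_4$, so the picture realizes exactly $\mincode$. Depending on whether the central facets are $F_2$ or $F_3$, this yields either a straight chain $F_1$–$F_2$–$F_3$–$F_4$ or a three-armed star about a shared facet, as in the realization of the running example $\C_{22}$ in Figure~\ref{fig:runningex}; a Path-failing sub-code contributes an analogous Venn block in place of one arm (and for L22 only the generic and one degenerate overlap pattern survive).

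The main obstacle is the bookkeeping in this final gluing step: checking, across the small number of surviving configurations (which shared facet is central in each chain, and whether one sub-code is a Venn block), that the assembled planar realization generates neither too few nor too many codewords, i.e.\ exactly $\mincode$. This is routine but delicate, and is directly parallel to the case analyses already carried out for the L18 and L21 codes in Lemmas~\ref{lem:L18} and~\ref{L21}; the genuinely new feature is that two $2$-simplices meet along the shared edge $F_2F_3$ rather than a single $2$-simplex meeting edges.
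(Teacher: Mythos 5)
Your proposal follows essentially the same route as the paper's proof: reduce to $\mincode$ via Lemma~\ref{lem:monotonicity}, decompose it as $(\C_1^\ast)_{\min} \cup (\C_4^\ast)_{\min}$ using Lemma~\ref{lem:L22min}, dispose of the case where both triplewise intersections are codewords by max-intersection-completeness, and otherwise glue realizations of the two sub-codes along the shared $F_2$--$(F_2\cap F_3)$--$F_3$ core, using the same key observation that the central facet of any Path-of-Facets chain must be $F_2$ or $F_3$. The configurations you defer as ``routine but delicate bookkeeping'' are exactly the paper's Cases 2a, 2b, 3a, and 3b, so the plan is sound and matches the published argument.
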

\begin{proof}
    Convex codes have no local obstructions (Lemma~\ref{lem: convex-no-obs}), so it suffices to prove the converse. Accordingly, assume that $\C$ is an L22 code with no local obstructions.  By Lemma~\ref{lem:monotonicity}, we need only show that the minimal code of $\Delta(\C)$, which we denote by $\mincode$, is convex.

    From Figure~\ref{fig:image-simplicial-complex},
    we see that L22 has exactly two facets, which are the following $2$-simplices:
        \[
    \mathcal{C}^*_1 ~:=~ \{F_1, F_2, F_3\} \quad \mathrm{and} \quad \mathcal{C}^*_4 ~:=~ \{F_2, F_3, F_4\}~.          
        \]
Therefore, since the nerve $\mathcal{N}(\mathcal{F}(\C))$ is L22, we have that $F_1\cap F_2 \cap F_3 \neq \varnothing$ and $F_2\cap F_3 \cap F_4 \neq \varnothing$, while all other triplewise intersections of the facets $F_i$ are empty.

    Our proof proceeds by investigating which codewords appear in $\mincode$, beginning with the triplewise intersections.
    We have four cases (below) based on whether each of $F_1\cap F_2 \cap F_3$ and $F_2\cap F_3 \cap F_4$ is a codeword. 
   By Lemmas \ref{lem:path} and \ref{lem:L22min}, $F_1 \cap F_2 \cap F_3$ is a codeword of $\mincode$ if and only if $\Delta((\C_1^\ast)_{\min})$ does not satisfy the Path-of-Facets Condition. 
    Analogously (due to the fact that L22 is symmetric via switching $F_1$ and $F_4$), $F_2 \cap F_3 \cap F_4$ is a codeword of $\mincode$ if and only if  $\Delta((\C_4^\ast)_{\min})$ does not satisfy the Path-of-Facets Condition.  Another fact that we will use below is that $(\C^\ast_1)_{\min} \cup (\C^\ast_4)_{\min} = \C_{\min}$ (which is due to Lemma~\ref{lem:L22min}).

    \textbf{Case 1:} Both $F_1 \cap F_2 \cap F_3$ and $F_2 \cap F_3 \cap F_4$ are codewords of $\mincode$.  In this case, 
    all triplewise intersections of facets are codewords of $\mincode$ (and the quadruplewise intersection $F_1\cap F_2 \cap F_3 \cap F_4$ is empty and thus is a codeword, too).  Additionally, by Lemma  \ref{lem:L22pairwise}, pairwise intersections that are not triplewise intersections are also codewords.  Therefore, $\mincode$ is max-intersection-complete and so (by Lemma~\ref{Lem:max-intersection}) is convex.

    \textbf{Case 2:} Neither $F_1 \cap F_2 \cap F_3$ nor $F_2 \cap F_3 \cap F_4$ is a codeword of $\mincode$.
    As noted above, this means that both simplicial complexes $\Delta((\C^\ast_1)_{\min})$ and $\Delta((\C^\ast_4)_{\min})$ satisfy the Path-Of-Facets Condition
        and therefore have convex realizations shown in Figure~\ref{fig:L22-case-2-beginning}
where 
    $(a_1,b_1,c_1)$ is a permutation of $(1,2,3)$  
     such that $(F_{a_1} \cap F_{b_1}) \smallsetminus F_{c_1} \neq \varnothing$,  $(F_{b_1} \cap F_{c_1})\smallsetminus F_{a_1} \neq \varnothing$, and $(F_{a_1} \cap F_{c_1}) \smallsetminus F_{b_1} = \varnothing$; and 
    analogously $(a_4,b_4,c_4)$ is a permutation of $(2,3,4)$ such that the same requirement holds -- except with the index $1$ replaced by $4$.

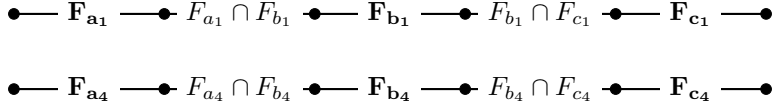
\begin{figure}[ht]
\begin{center}
        \begin{tikzpicture}[scale=1]
        \filldraw[black] (2,0) circle (2pt);
        \filldraw[black] (0,0) circle (2pt);
        \filldraw[black] (-2,0) circle (2pt);
        \filldraw[black] (-4,0) circle (2pt);
        \filldraw[black] (-6,0) circle (2pt);
        \filldraw[black] (4,0) circle (2pt);

        \draw[thick, black] (-6,0)--(-4,0) node [midway, fill=white] {$\mathbf{F_{a_1}}$};
        \draw[thick, black] (-4,0)--(-2,0) node [midway, fill=white] {$F_{a_1} \cap F_{b_1}$};
        \draw[thick, black] (-2,0)--(0,0) node [midway, fill=white] {$\mathbf{F_{b_1}}$};
        \draw[thick, black] (0,0)--(2,0) node [midway, fill=white] {$F_{b_1} \cap F_{c_1}$};
        \draw[thick, black] (2,0)--(4,0) node [midway, fill=white] {$\mathbf{F_{c_1}}$};

        \filldraw[black] (2,-1) circle (2pt);
        \filldraw[black] (0,-1) circle (2pt);
        \filldraw[black] (-2,-1) circle (2pt);
        \filldraw[black] (-4,-1) circle (2pt);
        \filldraw[black] (-6,-1) circle (2pt);
        \filldraw[black] (4,-1) circle (2pt);

        \draw[thick, black] (-6,-1)--(-4,-1) node [midway, fill=white] {$\mathbf{F_{a_4}}$};
        \draw[thick, black] (-4,-1)--(-2,-1) node [midway, fill=white] {$F_{a_4} \cap F_{b_4}$};
        \draw[thick, black] (-2,-1)--(0,-1) node [midway, fill=white] {$\mathbf{F_{b_4}}$};
        \draw[thick, black] (0,-1)--(2,-1) node [midway, fill=white] {$F_{b_4} \cap F_{c_4}$};
        \draw[thick, black] (2,-1)--(4,-1) node [midway, fill=white] {$\mathbf{F_{c_4}}$};
    
    \end{tikzpicture}       
\end{center}    
    \caption{Convex realizations $(\C^\ast_1)_{\min}$ and $(\C^\ast_4)_{\min}$ in Case~2.}
    \label{fig:L22-case-2-beginning}
\end{figure}

Next, we claim that $b_1 \neq 1$ and $b_4\neq 4$ (or, equivalently,
        $b_1 \in \{2,3\}$ and $b_4 \in \{2,3\}$).
We verify this claim as follows.  If $b_1 =1$, then $(F_2 \cap F_3) \smallsetminus F_1 = \varnothing $, which would imply that $(F_2 \cap F_3) \subseteq F_1$ and so the nonempty set $F_2 \cap F_3 \cap F_4$ would be a subset of $F_1$, which contradicts the fact that there are no quadruplewise intersections of facets $F_i$ (since the nerve is L22).  The $b_4=4$ case is symmetric.
        
The simplicial complex L22 is symmetric under switching $F_2$ and $F_3$, so without loss of generality we assume that $b_1=3$.  We thus have two subcases based on whether $b_4=2$ or $b_4=3$, as summarized in Figure~\ref{fig:L22-proof-show-subcases}. 

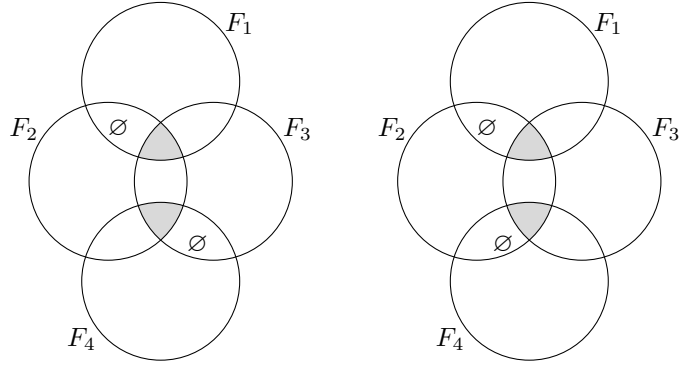
\begin{figure}[ht]
    \centering
\begin{tikzpicture}[scale=0.7]
\draw (1,1.9) circle(1.5cm);
\draw (0,0) circle(1.5cm);
\draw (2,0) circle(1.5cm);
\draw (1,-1.9) circle(1.5cm);
  
\node at (-1.6,1) {$F_2$};
\node at (3.6,1) {$F_3$};
\node at (2.5,3) {$F_1$};
\node at (-0.5,-3) {$F_4$};
\node at (1.7,-1.2) {$\varnothing$};
\node at (0.2,1) {$\varnothing$};
%

\begin{scope}
    \clip (2,0) circle(1.5cm);
    \clip (0,0) circle(1.5cm);
    \fill[opacity=0.15](1,1.9) circle(1.5cm);
\end{scope}

\begin{scope}
    \clip (2,0) circle(1.5cm);
    \clip (0,0) circle(1.5cm);
    \fill[opacity=0.15](1,-1.9) circle(1.5cm);
\end{scope}

\draw (7+1,1.9) circle(1.5cm);
\draw (7+0,0) circle(1.5cm);
\draw (7+2,0) circle(1.5cm);
\draw (7+1,-1.9) circle(1.5cm);
  
\node at (7+-1.6,1) {$F_2$};
\node at (7+3.6,1) {$F_3$};
\node at (7+2.5,3) {$F_1$};
\node at (7+-0.5,-3) {$F_4$};
\node at (7+0.5,-1.2) {$\varnothing$};
\node at (7+0.2,1) {$\varnothing$};
%

\begin{scope}
    \clip (7+2,0) circle(1.5cm);
    \clip (7+0,0) circle(1.5cm);
    \fill[opacity=0.15](7+1,1.9) circle(1.5cm);
\end{scope}

\begin{scope}
    \clip (7+2,0) circle(1.5cm);
    \clip (7+0,0) circle(1.5cm);
    \fill[opacity=0.15](7+1,-1.9) circle(1.5cm);
\end{scope}

\end{tikzpicture}
    \caption{Subcase 2(a) (left) and subcase 2(b) (right).
    The shaded regions indicate the two nonempty triplewise intersections, $F_1 \cap F_2 \cap F_3$ and $F_2 \cap F_3 \cap F_4$.
    }
    \label{fig:L22-proof-show-subcases}
\end{figure}
    \textit{Case 2a:} $b_4=2$.
    In this case, the realizations of 
     $(\C^\ast_1)_{\min}$ and $(\C^\ast_4)_{\min}$ shown earlier in Figure~\ref{fig:L22-case-2-beginning} can be glued to 
     form the realization of $(\C^\ast_1)_{\min} \cup (\C^\ast_4)_{\min} = \C_{\min}$ shown below:

    \begin{center}
        \begin{tikzpicture}
        \filldraw[black] (6,-1) circle (2pt);
        \filldraw[black] (8,-1) circle (2pt);
        \filldraw[black] (4,-1) circle (2pt);
        \filldraw[black] (2,-1) circle (2pt);
        \filldraw[black] (0,-1) circle (2pt);
        \filldraw[black] (-2,-1) circle (2pt);
        \filldraw[black] (-4,-1) circle (2pt);
        \filldraw[black] (-6,-1) circle (2pt);

        \draw[thick, black] (-6,-1)--(-4,-1) node [midway, fill=white] {$\mathbf{F_1}$};
        \draw[thick, black] (-4,-1)--(-2,-1) node [midway, fill=white] {$F_1 \cap F_3$};
        \draw[thick, black] (-2,-1)--(0,-1) node [midway, fill=white] {$\mathbf{F_3}$};
        \draw[thick, black] (0,-1)--(2,-1) node [midway, fill=white] {$F_2 \cap F_3$};
        \draw[thick, black] (2,-1)--(4,-1) node [midway, fill=white] {$\mathbf{F_2}$};
        \draw[thick, black] (4,-1)--(6,-1) node
        [midway, fill=white] {$F_2 \cap F_4$};
        \draw[thick, black] (6,-1)--(8,-1) node [midway, fill=white] {$\mathbf{F_4}$};
    \end{tikzpicture}        
    \end{center}

We verify that this realization is a convex realization as follows.  For $i\in [n] \smallsetminus F_4$ (where $n$ is the number of neurons), the open set (open interval) $U_i$ is unchanged from the realization shown earlier (top of Figure~\ref{fig:L22-case-2-beginning}).

Now consider the remaining case: $i \in F_4$.
From Figure~\ref{fig:L22-proof-show-subcases} (on the left), we see three possibilities: (i) $i\in F_4 \smallsetminus F_2$, (ii) $i \in (F_4 \cap F_2) \smallsetminus F_3$, and (iii) $i \in F_2 \cap F_3 \cap F_4$.  In these situations, the corresponding region $U_i$ is the open interval covering the regions labeled above by, respectively, (i) $F_4$, (ii) $F_2,F_2 \cap F_4, F_4$, (iii) $F_3,F_2 \cap F_3, F_2, F_2 \cap F_4, F_4$.  
Thus, $U_i$ is convex.

    \textit{Case 2b:} 
    $b_4=3$.
    In this case, our aim is first to extend the realizations in Figure~\ref{fig:L22-case-2-beginning} into $\mathbb{R}^2$, but carefully because we cannot glue together the convex realizations as we did in Figure \ref{fig:L18CT-case-2} (for example, there exists neuron $i$ that are both in $F_1 \cap F_3$ and $F_2 \cap F_3$ and neurons $j$ in both $F_3 \cap F_4$, $F_3$, and $F_2 \cap F_3$). 
    We proceed as follows: For $(\C_1^\ast)_{\min}$, we 
    begin with the realization in Figure~\ref{fig:L22-case-2-beginning} and then take the product with an open interval and then ``cut off'' the lower-right corner to obtain the quadrilateral-shaped realization shown in the top part of Figure~\ref{fig:case2(b)-l22} (the regions labeled by $F_1, F_1\cap F_3,F_3,F_2 \cap F_3,F_2$).  For 
    $(\C_4^\ast)_{\min}$, we use a triangle-shaped realization appearing on the right side of Figure~\ref{fig:L22-case-2-beginning} (the regions labeled by $F_4, F_3\cap F_4,F_3, F_2 \cap F_3,F_2$). It is straightforward to check that both extensions are still convex realizations.
    
    Now consider these realizations glued together 
    as shown in Figure \ref{fig:case2(b)-l22}. 
     We must now verify that this realization of $(\C^\ast_1)_{\min} \cup (\C^\ast_4)_{\min} = \C_{\min}$ is a convex realization. For $i\in [n] \smallsetminus F_4$, the open set $U_i$ is the same as the one in the realization of
     $(\C_1^\ast)_{\min}$ we constructed.  
  
     For the remaining case, $i \in F_4$, we see from Figure~\ref{fig:L22-proof-show-subcases} (on the right), that there are three possibilities: 
    (i) $i\in F_4 \smallsetminus F_3$, 
    (ii) $i \in (F_4 \cap F_3) \smallsetminus F_2$, and (iii) $i \in F_2 \cap F_3 \cap F_4$.  In these situations, the corresponding region $U_i$ is the 
    interior of the convex polygon covering the regions labeled above by, respectively, 
    (i) $F_4$, 
    (ii) $F_3,F_3 \cap F_4, F_4$, 
    (iii) $F_2,F_2 \cap F_3, F_3, F_3 \cap F_4, F_4$.  
Thus, $U_i$ is convex.
  
    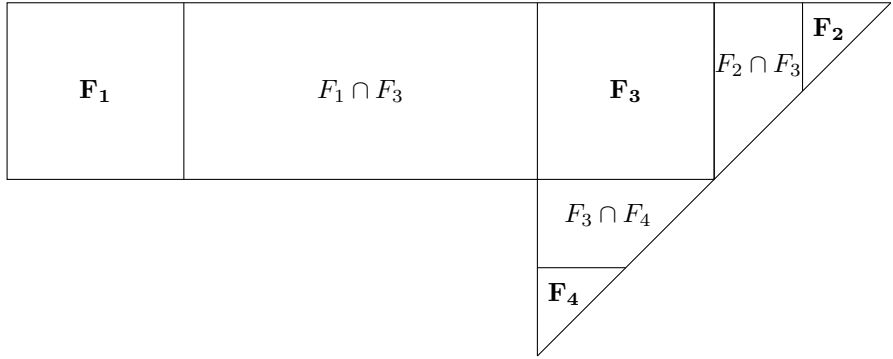
\begin{figure}[ht]
    \centering
    \begin{tikzpicture}[scale =2.35]
    \draw (0,0) rectangle (4,1);

    \draw (1,0) -- (1,1);
    \draw (3,-1) -- (3,1);
    \draw (4,0) -- (4,1);
    \draw (5,1)--(3,-1);
    \draw (4,1)--(5,1);
    \draw (4.5, 1)--(4.5,.5);
    \draw (3,-.5)--(3.5,-.5);

    \node at (.5,.5) {$\mathbf{F_1}$};
    \node at (2,.5) {$F_1 \cap F_3$};
    \node at (3.5,.5) {$\mathbf{F_3}$};
    \node at (4.25,.65) {$F_2 \cap F_3$};
    \node at (3.15,-.65) {$\mathbf{F_4}$};
    \node at (3.4, -.2) {$F_3 \cap F_4$};
    \node at (4.65, .85) {$\mathbf{F_2}$};
\end{tikzpicture}
    \caption{Convex Realization of a Case 2(b) L22 codes}
    \label{fig:case2(b)-l22}
\end{figure}
  
    \textbf{Case 3:} $F_1 \cap F_2 \cap F_3$ is a codeword of $\mincode$, but not $F_2 \cap F_3 \cap F_4$. 
    As explained earlier, the fact that 
    $F_2 \cap F_3 \cap F_4$ is not a codeword of $\mincode$ implies that 
    $\Delta((\C_4^\ast)_{\min})$ satisfies the Path-Of-Facets Condition.
    Therefore, by Lemma~\ref{lem:path}, $(\C_4^\ast)_{\min}$ has the following convex realization:
    \begin{center}
    \begin{tikzpicture}
        \filldraw[black] (2,-1) circle (2pt);
        \filldraw[black] (0,-1) circle (2pt);
        \filldraw[black] (-2,-1) circle (2pt);
        \filldraw[black] (-4,-1) circle (2pt);
        \filldraw[black] (-6,-1) circle (2pt);
        \filldraw[black] (4,-1) circle (2pt);

        \draw[thick, black] (-6,-1)--(-4,-1) node [midway, fill=white] {$\mathbf{F_{a_4}}$};
        \draw[thick, black] (-4,-1)--(-2,-1) node [midway, fill=white] {$F_{a_4} \cap F_{b_4}$};
        \draw[thick, black] (-2,-1)--(0,-1) node [midway, fill=white] {$\mathbf{F_{b_4}}$};
        \draw[thick, black] (0,-1)--(2,-1) node [midway, fill=white] {$F_{b_4} \cap F_{c_4}$};
        \draw[thick, black] (2,-1)--(4,-1) node [midway, fill=white] {$\mathbf{F_{c_4}}$};
    \end{tikzpicture}
    \end{center}
    where $(a_4, b_4, c_4)$ is a permutation of $(2,3,4)$ such that $(F_{a_4} \cap F_{b_4}) \smallsetminus F_{c_4} \neq \varnothing$, $(F_{b_4} \cap F_{c_4}) \smallsetminus F_{a_4} \neq \varnothing$, and $(F_{a_4} \cap F_{c_4}) \smallsetminus F_{b_4} = \varnothing$. As in Case 2, we 
    have $b_4=2$ or $b_4=3$.  
    By symmetry from switching the roles of $F_2$ and $F_3$, we may assume that $b_4 = 2$. 

    Next, we turn our attention to $(\C_4^\ast)_{\min}$.  
    As noted earlier, the fact that 
    $F_1 \cap F_2 \cap F_3$ is a codeword of $\mincode$ implies that 
    $\Delta((\C_1^\ast)_{\min})$ fails the Path-Of-Facets Condition. So, by definition, exactly $0$, $2$, or $3$ of the following sets are empty:
    \begin{align}
        \label{eq:apply-path-facets-L22}
        (F_1 \cap F_2) \smallsetminus F_3~,
        \quad
        (F_1 \cap F_3) \smallsetminus F_2~,
        \quad 
        (F_2 \cap F_3) \smallsetminus F_1~.        
    \end{align}
    The third set in~\eqref{eq:apply-path-facets-L22}, namely, $(F_2 \cap F_3) \smallsetminus F_1$, is nonempty because it is a superset of the nonempty triplewise intersection $F_2 \cap F_3 \cap F_4$.  Therefore, the number of empty sets in~\eqref{eq:apply-path-facets-L22} is either $0$ or $2$.  Equivalently, $ (F_1 \cap F_2) \smallsetminus F_3$ and $
        (F_1 \cap F_3) \smallsetminus F_2$
        are both nonempty (subcase 3a below) or both empty (subcase 3b).

    Before proceeding to the two subcases, we investigate the codewords appearing in $(\C_1^\ast)_{\min}$.  
    First, by construction and by definition~\eqref{eq:c-min}, $(\C_1^\ast)_{\min}$ contains the empty codeword $\varnothing$ and the maximal codewords $F_1, F_2, F_3$.  Also, $F_1 \cap F_2 \cap F_3 $ is a codeword, by Lemma~\ref{lem:path} and the fact that $\Delta((\C_1^\ast)_{\min})$ fails the Path-Of-Facets Condition.  Next, the 
    fact that $F_1\cap F_2 \cap F_3 \neq \varnothing$ and $F_2\cap F_3 \cap F_4 \neq \varnothing$, implies that $F_2 \cap F_3 \nsubseteq F_1$ and $F_2 \cap F_3 \nsubseteq F_4$; so, by Lemma~\ref{lem:intersection-2-facets}, $F_2 \cap F_3$ is a codeword of $(\C_1^\ast)_{\min}$.  
Finally, by Lemma~\ref{lem:intersection-facet}, it remains only to consider $F_1 \cap F_2$ and $F_1 \cap F_3$; we will do so in the analyses of the subcases below.

    \textit{Case 3a:} 
    Both $(F_1 \cap F_2)\smallsetminus F_3$ and $(F_1 \cap F_3)\smallsetminus F_2$ are nonempty. 
    In this case, $F_1 \cap F_2 \neq F_1 \cap F_2 \cap F_3$ and $F_1 \cap F_3 \neq F_1 \cap F_2 \cap F_3$, 
    so Lemma~\ref{lem:L22pairwise} implies that both $F_1 \cap F_2$ and $F_1 \cap F_3$ are codewords of $(\C_1^\ast)_{\min}$. 
    Thus, from the discussion above, $(\C_1^\ast)_{\min} = \{F_1, F_2, F_3, F_1 \cap F_2, F_1 \cap F_3, F_2 \cap F_3, F_1 \cap F_2 \cap F_3, \varnothing\}$. 
    A convex realization of $(\C_1^\ast)_{\min}$ is shown in the left part of Figure~\ref{fig:l22case3a} (the rectangle formed by the regions $F_3,F_2 \cap F_3, F_2$ plus the half-circle above it); this construction is due to Proposition 4.3 in~\cite{Cruz}.  ``Gluing'' this realization to the Path-Of-Facets realization of $(\C_4^\ast)_{\min}$ (shown earlier) yields 
    the realization of  $(\C^\ast_1)_{\min} \cup (\C^\ast_4)_{\min} = \C_{\min}$ shown in Figure \ref{fig:l22case3a}. 
        \begin{figure}[ht]
        \centering
    \begin{tikzpicture}[scale = 2]

    \draw(3,-1.5)--(10,-1.5);
    \draw(3,-.5)--(10,-.5);
    \draw (3,-.5) -- (7,-.5) arc(0:180:2) --cycle;


    \draw (3, -.5)--(3,-1.5);
    
    \draw (5.4, 1.46)--(4,-1.5);
    \draw (4.6, 1.46)--(6,-1.5);
    \draw (7, -.5)--(7,-1.5);
    
    \draw (9, -.5)--(9,-1.5);
    \draw (10, -.5)--(10,-1.5);


    \node at (3.5,-1) {$\mathbf{F_3}$};
    \node at (5,-1) {$F_2 \cap F_3$};
    \node at (6.5,-1) {$\mathbf{F_2}$};
    \node at (8, -1) {$F_2 \cap F_4$};
    \node at (9.5, -1) {$\mathbf{F_4}$};
    \node at (3.85, .25) {$F_1 \cap F_3$};
    \node at (5.95, .25) {$F_1 \cap F_2$};
    \node at (5, 1.1) {$\mathbf{F_1}$};
    \node at (5, -.2) {\tiny{$F_1 \cap F_2 \cap F_3$}};

\end{tikzpicture}
        \caption{Convex Realization of a Case 3a L22 code.}
        \label{fig:l22case3a}
    \end{figure}
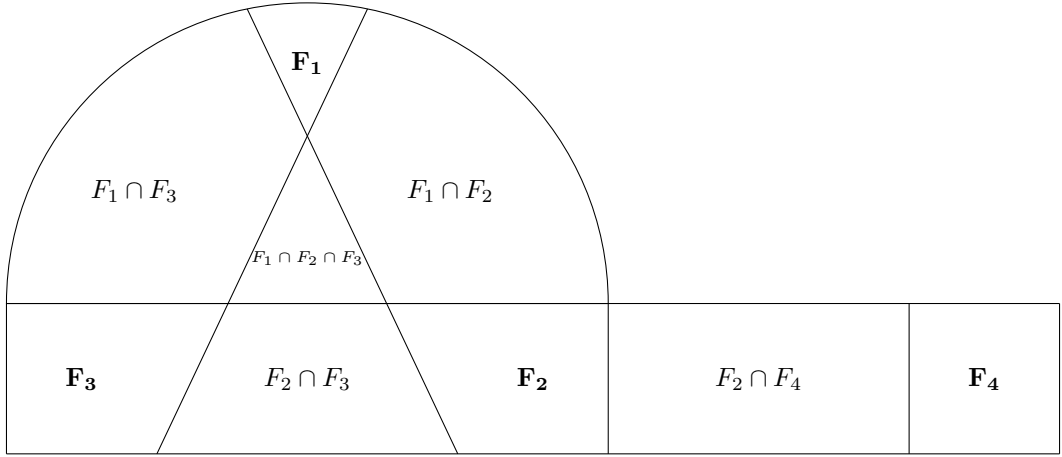

We check now that the realization is a convex realization.  For $i\in [n] \smallsetminus F_1$, the open set $U_i$ is the same as the one in the extension of the Path-of-Facets realization.  For $i\in [n] \smallsetminus F_1$, the open set $U_i$ is the same as the one in the realization of
     $(\C_1^\ast)_{\min}$ we constructed.  
     For the remaining case, $i \in F_1$, there are four possibilities: 
        (i) $i\in F_1 \smallsetminus (F_2 \cup F_3)$, 
    (ii) $i \in (F_1 \cap F_2) \smallsetminus F_3$, 
    (iii) $i \in (F_1 \cap F_3) \smallsetminus F_2$, 
    and (iv) $i \in F_1 \cap F_2 \cap F_3$. 
In these situations, the corresponding region $U_i$ is the open convex set covering the regions labeled above by, respectively, 
    (i) $F_1$, 
    (ii) $F_1,F_1 \cap F_2, F_2$, 
    (iii) $F_1,F_1 \cap F_3, F_3$, 
    (iv) $F_1, F_1 \cap F_2, F_1 \cap F_2 \cap F_3, F_1 \cap F_3, F_3, F_2 \cap F_3, F_2$.

\textit{Case 3b}: 
Both $(F_1 \cap F_2)\smallsetminus F_3$ and $(F_1 \cap F_3)\smallsetminus F_2$ are empty. In this case, $F_1 \cap F_2 = F_1 \cap F_3 = F_1 \cap F_2 \cap F_3$.  So, from the discussion immediately before Case~3a, $(\C_1^\ast)_{\min} = \{F_1, F_2, F_3, F_2 \cap F_3, F_1 \cap F_2 \cap F_3, \varnothing\}$.

We claim that the realization of $(\C^\ast_1)_{\min} \cup (\C^\ast_4)_{\min} = \C_{\min}$ shown in Figure~\ref{fig:l22case3c}
 is a convex realization.  Notice that the ``top layer'' of the realization is simply the extension of Path-of-Facets realization of $(\C_4^\ast)_{\min}$.
Thus, for $i\in [n] \smallsetminus F_1$, the open set $U_i$ is convex.       For the remaining case, $i \in F_1$, there are two possibilities: $i \in F_1 \cap F_2 = F_1 \cap F_3 = F_1 \cap F_2 \cap F_3$ and $i \in F_1 \smallsetminus (F_1 \cap F_2 \cap F_3)$.  In the first situation, the 
corresponding region $U_i$ is open rectangle encompassing the regions in Figure~\ref{fig:l22case3c} labeled by $F_3,F_2\cap F_3, F_2, F_1 \cap F_2 \cap F_3, F_1$; while in the second situation, $U_i$ is the open rectangle of the region labeled by $F_1$.  We conclude that the realization is convex, as desired.
    \begin{figure}
        \centering
        \begin{tikzpicture}[scale = 1]
            
    \draw(3,-1.5)--(10,-1.5);
    \draw(3,-.5)--(10,-.5);
    \draw (3,-1.5)--(3,-3.5)--(7,-3.5)--(7,-1.5);
    \draw (3, -2.5)--(7,-2.5);

    \draw (3, -.5)--(3,-1.5);
    
    \draw (4, -.5)--(4,-1.5);
    \draw (6, -.5)--(6,-1.5);
    \draw (7, -.5)--(7,-1.5);
    
    \draw (9, -.5)--(9,-1.5);
    \draw (10, -.5)--(10,-1.5);

    \node at (3.5,-1) {$\mathbf{F_3}$};
    \node at (5,-1) {$F_2 \cap F_3$};
    \node at (6.5,-1) {$\mathbf{F_2}$};
    \node at (8, -1) {$F_2 \cap F_4$};
    \node at (9.5, -1) {$\mathbf{F_4}$};

    \node at (5, -2) {$F_1 \cap F_2 \cap F_3$};
    \node at (5, -3) {$\mathbf{F_1}$};

        \end{tikzpicture}
        \caption{Convex realization of a Case 3b L22 code.}
        \label{fig:l22case3c}
    \end{figure}
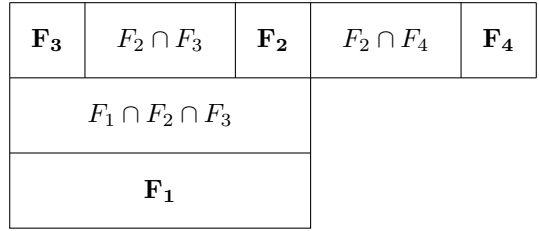

    \textbf{Case 4:} $F_1 \cap F_2 \cap F_3$ is \uline{not} a codeword of $\mincode$, but $F_2 \cap F_3 \cap F_4$ is.  This case is symmetric to Case 3, via switching the roles of $F_1$ and $F_4$. 
\end{proof}

\begin{ex}[Example~\ref{ex:nerve-is-L22} continued]
    Recall that $\C_{22} = \{\mathbf{134},\mathbf{1357},\mathbf{257},\mathbf{356}, 13, 35, 57, \varnothing\}$ 
    is an L22 code, with a convex realization shown earlier in Figure \ref{fig:runningex}.  That realization matches the one     
    shown in Figure~\ref{fig:case2(b)-l22}, with $(F_1, F_2, F_3, F_4) = (\mathbf{134}, \mathbf{356}, \mathbf{1357}, \mathbf{257})$.
\end{ex} 

\subsection{Sprockets in L24 codes} \label{sec:L24}
We have so far proven that all codes 
with nerve $\mathcal{N}(\mathcal{F}(\C))$ from L9 to L23 
are convex if and only if they do not have local obstructions. 
In the case of L24, however, this is no longer true.  Indeed, we already saw (in Example~\ref{ex:local-obstruction-L24}) that 
$\C_{24} = \{\mathbf{123},\mathbf{1246},\mathbf{145}, \mathbf{356}, 12, 14, 3 ,5 ,6, \varnothing\}$ is an L24 code that has no local obstructions
and yet is non-convex.  
Recall that this non-convexity comes from having a sprocket (Example~\ref{ex: sprocket-L24}). The following result shows that this fact extends to all L24 codes that (like $\C_{24}$) are minimal
codes for which $\Delta(\{F_1,F_2,F_3\})$ satisfies the Path-of-Facets Condition. 

\begin{thm}[L24 minimal codes] \label{thm: sprocket-L24}
    Let $\C$ be a $4$-maximal neural code that is minimal (more precisely, $\C$ is the minimal code of $\Delta(\C)$, as in~\eqref{eq:c-min}), and let
    $\mathcal{F}(\C)=\{F_1,F_2,F_3,F_4\}$ denote the set of maximal codewords.  Assume that the nerve $\mathcal{N}(\mathcal{F}(\C))$ is L24.  
    \begin{enumerate}
        \item If $\Delta(\{F_1,F_2,F_3\})$ does \uline{not} satisfy the Path-of-Facets Condition, then $\C$ is convex.
        \item If $\Delta(\{F_1,F_2,F_3\})$ satisfies the Path-of-Facets Condition, then $\C$ has a sprocket and thus is \uline{not} convex. 
    \end{enumerate}
\end{thm}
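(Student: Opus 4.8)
The plan is to handle the two parts separately, with Part~(1) being essentially immediate and Part~(2) carrying all the work. For Part~(1): since $\C$ is the minimal code of $\simp{\C}$, we have $\mincode(\simp{\C}) = \C \subseteq \C$, so $\C$ has no local obstructions. The three hypotheses of Proposition~\ref{prop: pathoffaces-convex} are then met (no local obstructions; nerve L24; failure of the Path-of-Facets Condition for $\simp{\{F_1,F_2,F_3\}}$), and that proposition gives that $\C$ is max-intersection-complete and hence convex.

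For Part~(2) the idea is to exhibit an explicit sprocket, and the first step is to pin down $\C$ completely. Since $\simp{\{F_1,F_2,F_3\}}$ satisfies the Path-of-Facets Condition, Lemma~\ref{lem:path}(b) supplies a permutation $(a,b,c)$ of $(1,2,3)$ with $(F_a\cap F_b)\smallsetminus F_c\neq\varnothing$, $(F_b\cap F_c)\smallsetminus F_a\neq\varnothing$, and $(F_a\cap F_c)\smallsetminus F_b=\varnothing$; the last condition forces $F_a\cap F_c = F_1\cap F_2\cap F_3 =: \tau$. Arguing as in Lemma~\ref{lem:restrict-facet} (every relevant intersection is an intersection of exactly two facets, because $\tau$ is the only nonempty triplewise intersection and there is no quadruplewise intersection), I would show the minimal code is exactly
\[
\C ~=~ \{F_1,F_2,F_3,F_4,\; F_1\cap F_4,\; F_2\cap F_4,\; F_3\cap F_4,\; F_a\cap F_b,\; F_b\cap F_c,\; \varnothing\}~,
\]
with $\tau = F_a\cap F_c$ \emph{not} a codeword (its link in $\simp{\C}$ equals its link in $\simp{\{F_1,F_2,F_3\}}$, which is contractible by Lemma~\ref{lem:triple}(a)). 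I then propose
\[
\mcW ~=~ (\sigma_1,\sigma_2,\sigma_3,\tau) ~=~ (F_a\cap F_4,\; F_b\cap F_4,\; F_c\cap F_4,\; F_1\cap F_2\cap F_3)
\]
as the sprocket, with witnesses $\rho_1 = F_a\cap F_b$ and $\rho_3 = F_b\cap F_c$. This generalizes the sprocket $(3,6,5,1)$ of $\C_{24}$ in Example~\ref{ex: sprocket-L24}, where $(a,b,c)=(1,2,3)$ and the hub $\sigma_2$ comes from the middle facet $F_b$.

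Next I would check the partial-wheel axioms. Each $\sigma_i\subseteq F_4$, so $\sigma_1\cup\sigma_2\cup\sigma_3\subseteq F_4\in\simp{\C}$; moreover $F_a\cap F_4$ is contained only in the facets $F_a$ and $F_4$ (if $F_a\cap F_4\subseteq F_j$ for $j\in\{1,2,3\}\smallsetminus\{a\}$, then $F_a\cap F_4\subseteq F_a\cap F_j\cap F_4=\varnothing$, a contradiction), so the only codeword containing any two — equivalently all three — of the $\sigma_i$ is $F_4$, giving P(i). For P(ii), a facet containing $\sigma_1\cup\sigma_2\cup\sigma_3$ must be $F_4$, but $\tau\cap F_4=\varnothing$ forces $\tau\not\subseteq F_4$, so $\sigma_1\cup\sigma_2\cup\sigma_3\cup\tau\notin\simp{\C}$. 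For P(iii)$_\circ$, each union lies in a single facet: $\sigma_1\cup\tau\subseteq F_a$, $\sigma_2\cup\tau\subseteq F_b$, and $\sigma_3\cup\tau\subseteq F_c$ (using $\tau\subseteq F_b$ for the middle spoke).

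The final and most delicate step is verifying S(1)--S(3), which reduces entirely to trunk computations; since $\C$ is now explicit, each is a finite check against the ten codewords above, but one must repeatedly use the L24 and Path-of-Facets structure to exclude spurious containments. The computations I expect to carry out are $\Tk_\C(\sigma_1\cup\tau)=\{F_a\}\subseteq\Tk_\C(\rho_1)=\{F_a,F_b,F_a\cap F_b\}$ and symmetrically $\Tk_\C(\sigma_3\cup\tau)=\{F_c\}\subseteq\Tk_\C(\rho_3)=\{F_b,F_c,F_b\cap F_c\}$ for S(1); $\Tk_\C(\tau)=\{F_1,F_2,F_3,\rho_1,\rho_3\}=\Tk_\C(\rho_1)\cup\Tk_\C(\rho_3)$ for S(2); and, noting $\tau\subseteq\rho_1\cap\rho_3$ so that $\rho_1\cup\rho_3\cup\tau=\rho_1\cup\rho_3$, the identity $\Tk_\C(\rho_1\cup\rho_3)=\{F_b\}\subseteq\Tk_\C(\sigma_2)=\{F_b,F_4,F_b\cap F_4\}$ for S(3). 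With all conditions confirmed, $\mcW$ is a sprocket of $\C$, and Lemma~\ref{lem:sprockets-nonconvex} yields a wheel, so $\C$ is non-convex. The main obstacle is the bookkeeping in these trunk computations: every containment must be justified from the facet-intersection pattern of L24 together with the three Path-of-Facets (in)equalities, and it is easy to miss a codeword when assembling a trunk.
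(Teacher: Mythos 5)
Your proposal is correct and follows essentially the same route as the paper: Part (1) via Proposition~\ref{prop: pathoffaces-convex}, and Part (2) by pinning down the ten codewords of the minimal code and exhibiting the sprocket $(F_a\cap F_4, F_b\cap F_4, F_c\cap F_4, F_1\cap F_2\cap F_3)$ with witnesses $F_a\cap F_b$ and $F_b\cap F_c$, which is exactly the paper's sprocket up to its relabeling $(a,b,c)=(1,2,3)$. Your trunk computations are accurate (indeed slightly more careful than the paper's, which has minor typos such as writing $\Tk_{\C}(\rho_j)=\{\rho_j,F_j\}$ where $F_b$ should also appear).
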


\begin{proof}
Part~(1) follows directly from Proposition~\ref{prop: pathoffaces-convex}.  
For part~(2), assume that 
$\Delta(\{F_1,F_2,F_3\})$ satisfies the Path-of-Facets Condition, which means that 
exactly one of the following three sets is empty: 
$(F_1 \cap F_2) \smallsetminus F_3$, $(F_1 \cap F_3) \smallsetminus F_2$, and $(F_2 \cap F_3) \smallsetminus F_1$.
However, the simplicial complex L24 is symmetric in $F_1,F_2,F_3$, so we can relabel $F_1,F_2,F_3$, if needed, so that $(F_1 \cap F_3) \smallsetminus F_2$ is empty but the other two sets are nonempty. The situation is summarized in Figure~\ref{fig:L24-proof}, which also labels the following sets:
\begin{align} \label{eq:proof-L24-intersections}
\sigma_i ~:=~ F_i \cap F_4 \quad \mathrm{(for }~ i=1,2,3)~,
    \quad  \tau ~:=~ F_1 \cap F_2 \cap F_3~,\quad  
    \mathrm{and} \quad  \rho_j~:=~F_j \cap F_2 \quad \mathrm{(for}~ j=1,3)~.  
\end{align} 
All of these sets $\sigma_i$, $\tau$, and $\rho_j$ are nonempty, due to the fact that $\C$ is an L24 code.

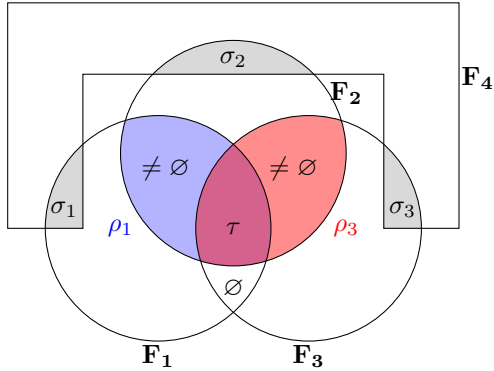
\begin{figure}[ht]
    
\begin{tikzpicture}
 

\begin{scope}
    \clip (0,0) circle(1.5cm);
    \clip (1,1) circle(1.5cm);
    \fill[blue,opacity=0.30](0,0) circle(1.5cm);
\end{scope}

\begin{scope}
    \clip (2,0) circle(1.5cm);
    \clip (1,1) circle(1.5cm);mm
    \fill[red,opacity=0.45](2,0) circle(1.5cm);
\end{scope}

\begin{scope}
    \clip (-2,0) rectangle (-1,1.5);
    \clip (0,0) circle(1.5cm);
    \fill[opacity=0.15](0,0) circle(1.5cm);
\end{scope}

\begin{scope}
    \clip (3,0) rectangle (4,1.5);
    \clip (2,0) circle(1.5cm);
    \fill[opacity=0.15](2,0) circle(1.5cm);
\end{scope}

\begin{scope}
    \clip (-1,2.05) rectangle (3,3);
    \clip (1,1) circle(1.5cm);
    \fill[opacity=0.15](1,1) circle(1.5cm);
\end{scope}

\draw (0,0) circle(1.5cm);
\draw (2,0) circle(1.5cm);
\draw (1,1) circle(1.5cm);

\draw (-1,0) -- (-2,0) -- (-2,3) -- (4,3) -- (4,0) --(3,0) -- (3,2.05) -- (-1,2.05) -- cycle;
  
\node at (0,-1.7) {$\mathbf{F_1}$};
\node at (2,-1.7) {$\mathbf{F_3}$};
\node at (2.5,1.8) {$\mathbf{F_2}$};
\node at (4.25,2) {$\mathbf{F_4}$};
\node at (1,-0.8) {$\varnothing$};
\node at (0.1,0.8) {$\neq \varnothing$};
\node at (1.8,0.8) {$\neq \varnothing$};
\node at (1,0) {$\tau$};
\node[blue] at (-0.5,0) {$\rho_1$};
\node[red] at (2.5,0) {$\rho_3$};
\node at (-1.25,0.25) {$\sigma_1$};
\node at (1,2.25) {$\sigma_2$};
\node at (3.25,0.25) {$\sigma_3$};

\end{tikzpicture}

\caption{ 
A Venn diagram of the maximal codewords $F_1,F_2,F_3,F_4$ of any code $\C$ as in the proof of Theorem~\ref{thm: sprocket-L24} (that is, 
any L24 minimal code $\C$ for which $\Delta(\{F_1,F_2,F_3\})$ satisfies the Path-of-Facets Condition with $(F_1 \cap F_3) \smallsetminus F_2 \neq \varnothing$).  The fact that $F_4$ intersects each of $F_1$, $F_2$, and $F_3$ individually -- but does not intersect $F_1\cap F_2$, $F_1 \cap F_3$, or $F_2 \cap F_3$ -- comes from the fact that $\C$ is an L24 code (see Figure~\ref{fig:image-simplicial-complex}).  Certain intersections are labeled in this Venn diagram, namely, 
$\sigma_i$, $\tau$, and $\rho_j$, as defined in~\eqref{eq:proof-L24-intersections}.}
\label{fig:L24-proof}
\end{figure}
    
Next, we claim that the code $\C$ consists of $10$ codewords, as follows:
    \begin{align}
        \label{eq:code-L24-proof}
    \C~=~\{ \mathbf{F_1},~ \mathbf{F_2},~ \mathbf{F_3},~ \mathbf{F_4},~ \rho_1=F_1 \cap F_2,~ \rho_3=F_2\cap F_3,~ \sigma_1=F_1\cap F_4, ~
\sigma_2=F_2\cap F_4,~ 
\sigma_3=F_3\cap F_4,~ \varnothing \}~.        
    \end{align}
Indeed, each of the facet-intersections $\rho_i$ and $\sigma_j$ is seen from Figure~\ref{fig:L24-proof} to be the intersection of exactly two facets $F_i$, and so (by Lemma~\ref{lem:intersection-2-facets}) is a mandatory face of $\Delta(\C)$.  On the other hand, the remaining intersection of facets, $\tau = F_1 \cap F_2 \cap F_3$, is non-mandatory (this is essentially Lemma~\ref{lem:path}(a)).  Now our claim follows readily from~\eqref{eq:c-min} and Lemma~\ref{lem:intersection-facet}.

From~\eqref{eq:code-L24-proof}, we obtain the Hasse diagram of $\C$ shown in Figure~\ref{fig:hasse-diagram-L24}. This figure will be useful in 
the rest of the proof.

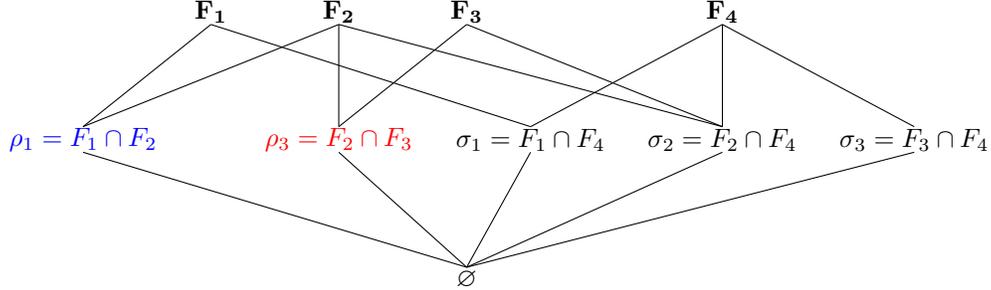
\begin{figure}[ht]
\begin{tikzpicture}[scale=1.7]
 

\node at (-2,3) {$\mathbf{F_1}$};
\node at (-1,3) {$\mathbf{F_2}$};
\node at (0,3) {$\mathbf{F_3}$};
\node at (2,3) {$\mathbf{F_4}$};
\node[blue] at (-3,2) {$\rho_1=F_1\cap F_2$};
\node[red] at (-1,2) {$\rho_3=F_2\cap F_3$};
\node at (0.5,2) {$\sigma_1=F_1\cap F_4$};
\node at (2,2) {$\sigma_2=F_2\cap F_4$};
\node at (3.5,2) {$\sigma_3=F_3\cap F_4$};
\node at (0,0.9) {$\varnothing$};

\draw (0,1) -- (-3,1.9);
\draw (0,1) -- (-1,1.9);
\draw (0,1) -- (0.5,1.9);
\draw (0,1) -- (2,1.9);
\draw (0,1) -- (3.5,1.9);

\draw (-3,2.1) -- (-2,2.9);
\draw (-3,2.1) -- (-1,2.9);

\draw (-1,2.1) -- (-1,2.9);
\draw (-1,2.1) -- (0,2.9);

\draw (0.5,2.1) -- (-2,2.9);
\draw (0.5,2.1) -- (2,2.9);

\draw (2,2.1) -- (-1,2.9);
\draw (2,2.1) -- (2,2.9);

\draw (2,2.1) -- (0,2.9);
\draw (3.5,2.1) -- (2,2.9);

\end{tikzpicture}

    \caption{The Hasse diagram (with respect to inclusion) of the codewords in any code $\C$ as in the proof of Theorem~\ref{thm: sprocket-L24}.  This means that a line is drawn from one codeword $c$ up to another codeword $c'$ if $c \subseteq c'$ and there is no codeword $d \in \C$ for which $c \subsetneq d \subsetneq c'$.}
    \label{fig:hasse-diagram-L24}
\end{figure}
    
    Our aim is to show that $(\sigma_1, \sigma_2, \sigma_3, \tau)$ is a sprocket of $\C$ (as in Definition \ref{def:wheel-sprocket}). 
    We begin by showing that $(\sigma_1, \sigma_2, \sigma_3, \tau)$ is a partial-wheel. The first requirement P(i) is that 
    $\sigma_1 \cup \sigma_2 \cup \sigma_3 \in \Delta(\C)$ and 
    $\Tk_{\C}(\sigma_i \cup \sigma_j) = \Tk_{\C}(\sigma_1 \cup \sigma_2 \cup \sigma_3)$ for every $1 \leq i < j \leq 3$.  
    By construction, $\sigma_1 \cup \sigma_2 \cup \sigma_3$ is a subset of $F_4$ (this fact 
    is also readily seen from Figure~\ref{fig:L24-proof}), and $F_4$ is a facet of $\Delta(\C)$;  therefore $\sigma_1 \cup \sigma_2 \cup \sigma_3$ is a face of $\Delta(\C)$, as required.  
    Next, it is readily seen from the Hasse digram in Figure~\ref{fig:hasse-diagram-L24} that 
        $\Tk_{\C}(\sigma_i \cup \sigma_j) = \Tk_{\C}(\sigma_1 \cup \sigma_2 \cup \sigma_3) = \{F_4\}$ for all $1 \leq i < j \leq 3$.  We conclude that P(i) holds.

    Next, P(ii) requires that $\sigma_1 \cup \sigma_2 \cup \sigma_3 \cup \tau \notin \Delta(\C)$. 
    From Figure~\ref{fig:L24-proof}, we see that $\sigma_1 \cup \sigma_2 \cup \sigma_3 \cup \tau $ is not a subset of any facet $F_i$ of $\Delta(\C)$ and therefore is not a face of $\Delta(\C)$, as desired.  

    Finally, for  P(iii)$_\circ$, we must have that $\sigma_i \cup \tau \in \Delta(\C)$, for $i=1,2,3$. 
    Figure~\ref{fig:L24-proof} shows that 
    $ \sigma_i \cup \tau$ is a subset of $ F_i$, which is a facet of $\Delta(\C)$, and so 
    $\sigma_i \cup \tau \in \Delta(\C)$.
Therefore $(\sigma_1, \sigma_2, \sigma_3, \tau)$ is a partial-wheel of $\C$.

    We now show that the partial-wheel  $(\sigma_1, \sigma_2, \sigma_3, \tau)$ is, in fact,  a sprocket with witnesses $\rho_1$ and $\rho_3$.  
    Following Definition \ref{def:wheel-sprocket}, we must verify three additional conditions:
    \begin{enumerate}[start=1,label={\bfseries{S}(\arabic*):}, itemindent=5pt]
        \item $\Tk_{\C}(\sigma_j \cup \tau)\subseteq \Tk_{\C}(\rho_j)$ for $j \in \{1,3\}$,
            
        \item $\Tk_{\C}(\tau) \subseteq \Tk_{\C}(\rho_1) \cup \Tk_{\C}(\rho_3)$, and 
            
        \item $\Tk_{\C}(\rho_1 \cup \rho_3 \cup \tau) \subseteq \Tk_{\C}(\sigma_2)$.
            
    \end{enumerate}
    Using Figures~\ref{fig:L24-proof} and~\ref{fig:hasse-diagram-L24}, we compute the trunks appearing above, and readily verify the three conditions:
    \begin{itemize}
        \item $\Tk_{\C}(\sigma_j \cup \tau) = \{F_j\}$, for $j \in \{1,3\}$,
        \item $\Tk_{\C}(\rho_j) = \{\rho_j, F_j\}$, for $j \in \{1,3\}$
            
        \item $\Tk_{\C}(\tau) = \{ \rho_1, \rho_3, F_1,F_2,F_3\}$, 
        \item $ \Tk_{\C}(\rho_1) \cup \Tk_{\C}(\rho_3) = \{F_2\}$,  and
        \item $ \Tk_{\C}(\sigma_2) = \{\sigma_2,F_2,F_4\}$.
    \end{itemize}
    
    Finally, the fact that $\C$ has a sprocket implies (by Lemma~\ref{lem:sprockets-nonconvex}) that $\C$ cannot be convex.
\end{proof}

\begin{ex}[Example~\ref{ex:nerve-is-L24} continued; L24 code]  \label{ex: sprocket-min-L24}
Recall (from Examples~\ref{ex:local-obstruction-L24},~\ref{ex:path-facets-L24}, 
and~\ref{ex:nerve-is-L24}) that 
$\C_{24} =  \{\mathbf{123},\mathbf{1246},\mathbf{145}, \mathbf{356}, 12, 14, 3 ,5 ,6, \varnothing\}$
is a minimal L24 code; and that $\Delta(\{ F_1,F_2,F_3 \})$ satisfies the Path-of-Facets Condition (here, $F_1=123$, $F_2=1246$, and $F_3=145$). 
For this code $\C_{24}$,
the sprocket given in the proof of Theorem~\ref{thm: sprocket-L24} matches the one shown earlier in Example~\ref{ex: sprocket-L24}.
\end{ex}

\begin{ex}
    Consider the code $\C = \{\mathbf{1236}, \mathbf{234}, \mathbf{135}, \mathbf{456}, 13, 23, 4, 5, 6, \varnothing\}$, which is the code called 
    ``$\C_2$'' in prior works \cite{sunflower, wheels}.
    It is straightforward to check that $\C$ is an L24 code. It was first shown in \cite{sunflower} that $\C$ is non-convex; and a sprocket of $\C$ is constructed in~\cite[Example~4.7]{wheels}. 
    The sprocket in that example is exactly the sprocket described in Theorem~\ref{thm: sprocket-L24}.
\end{ex}

Theorem~\ref{thm: sprocket-L24} resolves 
Conjecture~\ref{conj-Jeffs} for minimal L24 codes, but 
the conjecture remains open for L24 codes that are non-minimal.  We illustrate this in the next example.

\begin{ex}
[Example~\ref{ex: sprocket-min-L24} continued]  \label{ex:min-L24-plus-more-codewords}
We revisit the minimal L24 code \[\C_{24} =  \{\mathbf{123},\mathbf{1246},\mathbf{145}, \mathbf{356}, 12, 14, 3 ,5 ,6, \varnothing\}.\]  Consider adding codewords to $\C_{24}$, while maintaining the same simplicial complex.  If we add the codeword $1$ (and any additional codewords), the resulting L24 code is max-intersection-complete and therefore convex.  However, if we do not add $1$, then the convexity status of the resulting L24 code is unclear.
%
\end{ex}

\begin{remark}[L28 codes] \label{rem:L28}
    Consider an L24 code $\mathcal{D}$ on $n$ neurons that satisfies the hypotheses of Theorem~\ref{thm: sprocket-L24} (for instance, $\mathcal{D} = \code_{24}$, where $n=6$).  Let $\mathcal{D}_{28}$ be the code obtained from $\mathcal{D}$ by adding a new neuron $n+1$ to all nonempty codewords (for instance, if $\mathcal{D} = \code_{24}$, then $\mathcal{D}_{28}$ has maximal codewords $\mathbf{1237},\mathbf{12467},\mathbf{1457}, \mathbf{3567}$).  It is straightforward to check that $\mathcal{D}_{28}$ is a minimal L28 code (this essentially follows from two facts: there is a bijection between the maximal codewords of $\mathcal{D}$ and those of $\mathcal{D}_{28}$, and the links of intersections of maximal codewords are the same between the two codes). Additionally, $\mathcal{D}_{28}$ has a sprocket (the ideas in the proof of Theorem~\ref{thm: sprocket-L24} extend to apply to $\mathcal{D}_{28}$) and so is non-convex.  We conclude that Theorem~\ref{thm:L24-summary}(2) has an analogue obtained by replacing ``L24'' by ``L28''.
\end{remark}

\begin{remark} \label{rem:wheels-table-3-codes}
The authors of~\cite{wheels} enumerated $6$-neuron minimal codes with up to seven maximal codewords.  Among such codes with exactly four maximal codewords, three were found to have wheels (in fact, sprockets)~\cite[Table~1]{wheels}.  
Two of these codes are L25 and L28 codes, while the third one, shown below, is an L24 code that 
satisfies the hypotheses of Theorem~\ref{thm: sprocket-L24}:
\[
\{
\mathbf{123},\mathbf{145},\mathbf{246},\mathbf{1356},13, 15,2,4,6,
\varnothing
\}~.
\]

\end{remark}

\begin{remark}
    It is known that
    if a neural code $\C$ has a sprocket, then $\C$ has a sprocket $(\sigma_1, \sigma_2, \sigma_3, \tau)$ such that $\tau \notin \C$ and $\tau$ is a max-intersection face of $\Delta(\C)$~\cite[Corollary~5.7]{wheels}. 
    For codes satisfying the hypotheses of Theorem~\ref{thm: sprocket-L24}, such a sprocket is given in the proof of the theorem (we have $\tau = F_1 \cap F_2 \cap F_3 \notin \C$). 
    More generally, it is conjectured that (1) if $(\sigma_1, \sigma_2, \sigma_3, \tau)$ is a wheel of a code $\C$, then $\tau \notin \C$, and (2) if a wheel of the code $\C$ exists, then there exists a wheel $(\sigma_1, \sigma_2, \sigma_3, \tau)$ in which $\tau$ is a max-intersection face~\cite[Conjecture~5.10]{wheels}.
\end{remark}

\section{Discussion} \label{sec:discussion}
We return to the motivation for this work, Conjecture~\ref{conj-Jeffs}, which states that $4$-maximal codes are convex if and only if they have no local obstructions and no wheels.  Our work made significant progress toward resolving this conjecture.  Specifically, we proved that it holds for all codes from L9 to L23 (Corollary~\ref{cor:conjecture}); in fact, for such codes, convexity is equivalent to having no local obstructions (there is no need to check for wheels).  We also resolved Conjecture~\ref{conj-Jeffs} for L24 codes that are minimal (Theorem~\ref{thm:L24-summary}), and in this case wheels (specifically, sprockets) are needed.     
The remaining open cases of Conjecture~\ref{conj-Jeffs} are non-minimal  L24 codes (recall Example~\ref{ex:min-L24-plus-more-codewords}) and all cases of L25 through L28.  Additionally, we do not know what types of wheels occur in such codes (for instance, are there any beyond sprockets?).

Our work points the way toward future progress on codes with more than four maximal codewords.  We know, for instance, that Conjecture~\ref{conj-Jeffs} extends to all codes for which all triplewise intersections of maximal codewords are empty (Proposition~\ref{prop:notriangle}).  Additionally, we expect that ideas developed in this work -- gluing realizations together, in particular -- will be useful.  
Finally, we also know that the convex realizations will no longer always be planar; indeed, there is a $5$-maximal convex code -- namely, the ``sunflower code''~$\mathcal{S}_4$~\cite[Proposition~2.3]{jeffsdim} -- that 
has no convex realization even in $\mathbb{R}^3$ (but has one in $\mathbb{R}^4$).

\subsubsection*{Acknowledgments} {\footnotesize This research was initiated by Gisel Flores, Osiano Isekenegbe, and Deanna Perez in the 2022 MSRI-UP REU, which was hosted by the Mathematical Sciences Research Institute (MSRI, now SL-Math) and was supported by the NSF (DMS-2149642) and the Alfred P. Sloan Foundation. Saber Ahmed, Natasha Crepeau, and Anne Shiu were the mentors of MSRI-UP for this project.
Anne Shiu was partially supported by the NSF (DMS-1752672).
The authors are grateful to Federico Ardila for his leadership as MSRI-UP's on-site director and for his helpful conversations. 
The authors also thank  
Scribble Together for proving tools for remote collaboration.}

\bibliographystyle{plain}
\bibliography{ref.bib}

\end{document}